\documentclass[reqno]{amsart}

\usepackage[margin=1.4in]{geometry}
\usepackage{amssymb, amsmath, amsthm, amsfonts, amscd}
\usepackage{mathrsfs,mathtools}
\usepackage[shortlabels]{enumitem}
\usepackage{tikz, tikz-cd}
\usepackage[all]{xy}
\usepackage[new]{old-arrows}
\usepackage{pst-node}
\usepackage{varwidth}
\usepackage{graphicx}
\usepackage{subfig}
\usepackage{tabularx, booktabs}
\usepackage{multirow, multicol}
\usepackage{xcolor}
\usepackage{mathrsfs}
\usepackage{yfonts, euscript}
\usepackage{csquotes, dirtytalk}
\usepackage[pagewise]{lineno} 
\usepackage[colorlinks=true, allcolors=blue]{hyperref}

\usetikzlibrary{arrows, arrows.meta}

\newtheorem{thm}{{\bf Theorem}}[section]
\newtheorem{lemma}[thm]{{\bf Lemma}}
\newtheorem{prop}[thm]{{\bf Proposition}}

\newtheorem*{rmk}{{\bf Remark}}

\newcommand{\RNum}[1]{\uppercase\expandafter{\romannumeral #1\relax}}

\newcommand{\ad}{\,\mathrm{ad}\,}

\newcommand{\GL}{\,\mathrm{GL}\,}
\newcommand{\SL}{\,\mathrm{SL}\,}

\newcommand{\PGL}{\,\mathrm{PGL}\,}

\newcommand{\PSO}{\,\mathrm{PSO}\,}
\newcommand{\SO}{\,\mathrm{SO}\,}

\newcommand{\Rad}{\,\mathrm{Rad}\,}

\newcommand{\diag}{{\rm diag}}

\begin{document}


\title[Automorphisms of Twisted Chevalley Groups of type ${}^2 D_\ell$]{Automorphisms of Twisted Chevalley Groups \\ of type ${}^2 D_\ell \ (\ell \geq 4)$ over Local Rings}


\author{Elena I. Bunina}
\address{Department of Mathematics,
    	Bar–Ilan University, Ramat Gan, Israel}
\email{\href{mailto:helenbunina@gmail.com}{helenbunina@gmail.com}}
\thanks{}

\author{Deep H. Makadiya}
\address{Department of Mathematics,
    	Bar–Ilan University, Ramat Gan, Israel}
\email{\href{mailto:deepmakadia25.dm@gmail.com}{deepmakadia25.dm@gmail.com}}
\thanks{}


\subjclass[2020]{20G35}
\keywords{Twisted Chevalley groups, Automorphisms, Isomorphisms}


\begin{abstract}
    This paper is part of a series devoted to the classification of isomorphisms and automorphisms of twisted Chevalley groups over commutative rings.
    In this work, we prove that every automorphism of a twisted Chevalley group of type ${}^2 D_\ell$ ($\ell \geqslant 4$) over a local ring containing $1/2$ is standard.
\end{abstract}


\maketitle 
\tableofcontents


\section{Introduction}\label{sec:intro}

The study of automorphisms and isomorphisms of Chevalley groups over commutative rings has been a subject of intensive research for several decades. 
For untwisted Chevalley groups, this development traces back to the foundational work of Steinberg~\cite{RS1} and Humphreys~\cite{JH0} over fields, which subsequently extended to various ring-theoretic contexts. 
In particular, the structure of normal subgroups and the classification of automorphisms of Chevalley groups over various classes of commutative rings are now well understood, thanks to the contributions by Abe~\cite{EA0, EA2, EA3, EA4, EA&KS}, Suzuki~\cite{EA&KS}, Taddei~\cite{GT}, Vaserstein~\cite{LV}, Klyachko~\cite{AK}, Vavilov~\cite{NV1}, and many others. 
More recently, Bunina and her coauthors have systematically described automorphisms of Chevalley groups over general commutative rings in an extensive series of papers (see, e.g.,~\cite{EB24:final, EB12:main, EB07:first, EB08:b2andg2, EB09:alwithhalfele, EB10:alwithhalf, EB10:alwithouthalf, EB10:blwithhalf, EB10:f4withhalf, EB&PV14:g2withouthlfnor, EB&PV14:g2withouthalf, EB&MV24:g2with1/3}). 

In contrast, the case of twisted Chevalley groups over commutative rings is much less developed, even though the corresponding groups over fields are classical objects and their automorphisms are standard in the sense of Steinberg~\cite{RS1}.
Nevertheless, the foundational properties of these twisted groups over rings, including the structure of their normal subgroups and other key algebraic features, are now well understood due to the pioneering work of Abe~\cite{EA1} and Suzuki~\cite{KS1, KS2, KS3}, alongside the recent contributions of Garge and Makadiya~\cite{SG&DM1, SG&DM2, SG&DM3}.

To systematically classify the automorphisms and isomorphisms of twisted Chevalley groups over commutative rings, we initiated a project consisting of a series of papers. The first paper in this series~\cite{EB&DM1} was devoted to twisted Chevalley groups of type ${}^2 A_{\ell}$ ($\ell \geq 5$) over local rings, while the second paper~\cite{EB&DM2} treated the remaining cases ${}^2 A_{3}$ and ${}^2 A_4$.

The present paper is the third in this series. Its main objective is to classify all automorphisms of twisted Chevalley groups of type ${}^2 D_{\ell}$ ($\ell \geqslant 4$) over local rings.

Let $R$ be a local ring with $1/2 \in R$, and let $\theta$ be an involution of $R$, that is, a ring automorphism of order $2$. 
The root system $\Phi$ of type $D_{\ell}$ ($\ell \geq 4$) admits a diagram automorphism $\rho$ of order $2$, which, together with the ring involution $\theta$, defines a twisted involution $\sigma = \rho \circ \theta = \theta \circ \rho$ of the Chevalley group $G_{\pi}(D_{\ell}, R)$. 
The fixed-point subgroup $G_{\pi,\sigma}(D_{\ell}, R) = G_{\pi}(D_{\ell}, R)^\sigma$ is called the twisted Chevalley group of type ${}^2 D_{\ell}$ over $R$. We denote its elementary subgroup by $E'_{\pi,\sigma}(D_{\ell}, R)$.

The main result of this paper establishes that all automorphisms of the twisted Chevalley group $G_{\pi,\sigma}(D_{\ell}, R)$ and its elementary subgroup $E'_{\pi,\sigma}(D_{\ell}, R)$ over local rings containing $1/2$ are standard in an appropriate sense. 
More precisely, we prove that every automorphism of $E'_{\pi,\sigma}(D_{\ell}, R)$ is a composition of a strictly inner automorphism, a diagonal automorphism, and a ring automorphism, whereas every automorphism of $G_{\pi,\sigma}(D_{\ell}, R)$ is a composition of a strictly inner automorphism, a diagonal automorphism, a ring automorphism, and a central automorphism.

We first establish the result for elementary adjoint groups (cf. Theorem~\ref{MT_local1}). The proof proceeds in several steps. The key observation is that the description of normal subgroups obtained in~\cite{SG&DM1} makes it possible to analyze automorphisms modulo the Jacobson radical. After passing to the corresponding quotient group over the residue field, we reduce the problem to the classical field case and normalize the automorphism via conjugation.

The subsequent argument is inherently matrix-theoretic. By performing a sequence of suitable basis changes, we arrange for the automorphism to fix the distinguished generators of the group, namely the elements $h_{[\alpha]}(-1)$, $w_{[\alpha]}(1)$, and $x_{[\alpha]}(1)$. The final step consists of proving that an automorphism fixing all of these standard generators must be induced by an automorphism of the underlying ring.

Having established the result for adjoint elementary groups, we extend it to groups corresponding to arbitrary
$\rho$-invariant weight lattices (see Theorem~\ref{MT_local2}). Finally, leveraging the fact that the elementary twisted Chevalley group is a characteristic subgroup of the full twisted Chevalley group, we derive the complete description of the automorphisms of $G_{\pi,\sigma}(D_\ell,R)$ (see Theorem~\ref{MT_local3}). The proofs of these latter two theorems follow exactly the same lines of reasoning as those of \cite[Theorems~3.3 and 3.4]{EB&DM1} and are therefore omitted.

\smallskip 

The paper is organized as follows. 
In Section~\ref{sec:G(R)_and_MT}, we introduce explicit matrix realizations of the twisted Chevalley groups of type ${}^2 D_\ell$, where $\ell \geq 4$, and recall their basic properties. 
In particular, we describe these groups as the fixed-point subgroups of $\SO_{2\ell}(R)$ and $\PSO_{2\ell}(R)$ under the automorphism $\sigma$. 
We then state the main theorems.
In Section~\ref{sec:outline_of_main_thm}, we outline the proof of Theorem~\ref{MT_local1} and reduce the argument to several key steps. 
Finally, Sections~\ref{sec:image_of_h(-1)}--\ref{sec:image_of_x(t)} are devoted to the detailed proofs of these steps.

\smallskip

Throughout the paper, we freely employ the notation and preliminary results established in \cite{EB&DM1} without further comment; the reader is strongly encouraged to consult \cite[Section~2]{EB&DM1} for a comprehensive background.


\section{The Group \texorpdfstring{$G_{\pi, \sigma}(D_\ell, R)$}{G(R)} and the Main Theorems}\label{sec:G(R)_and_MT}


In this section, we study twisted Chevalley groups of type $D_\ell$ for $\ell \geqslant 4$. 
We begin by recalling the root system of type $D_\ell$ and the corresponding Lie algebras. 
We then define the groups $\SO_{2\ell}(R)$, $\SO_{2\ell}(R)^{\sigma}$, $\PSO_{2\ell}(R)$, and $\PSO_{2\ell}(R)^{\sigma}$, which realize the Chevalley and twisted Chevalley groups considered in this paper. 
Next, we introduce the notion of $\sigma$-symmetric basis changes, which will play an important role in the subsequent sections. 
We conclude the section by stating the main theorems of the paper. 

We emphasize that, although we recall all notions specific to groups of type $D_\ell$ required in this paper, we do not repeat the standard definitions and results concerning Chevalley groups and root systems. For this background, as we already mentioned, we refer the reader to \cite[Section~2]{EB&DM1}.


\subsection{The root system of type \texorpdfstring{$D_\ell$ ($\ell \geqslant 4$)}{D\_l}} \label{subsec:root_system}

Let $\Phi$ be the root system of type $D_\ell$. It can be realized in the Euclidean space $\mathbb{R}^\ell$ as
\[
    \Phi = \{\pm \varepsilon_i \pm \varepsilon_j \mid 1 \leq i < j \leq \ell\},
\]
where $\{\varepsilon_1, \dots, \varepsilon_\ell\}$ is the standard basis of $\mathbb{R}^{\ell}$. We fix the system of simple roots
\[
    \Delta = \{ \alpha_i \coloneqq \varepsilon_i - \varepsilon_{i+1} \mid 1 \leq i \leq \ell-1 \} \cup \{ \alpha_\ell \coloneqq \varepsilon_{\ell-1} + \varepsilon_\ell \}.
\]

Let $\rho$ be the non-trivial automorphism of the Dynkin diagram of type $D_\ell$, defined by
\[
    \rho(\alpha_{\ell-1}) = \alpha_\ell, \qquad \rho(\alpha_\ell) = \alpha_{\ell-1}, \qquad \text{and} \qquad \rho(\alpha_i) = \alpha_i \quad (1 \leq i \leq \ell-2).
\]
This naturally extends to an isometry of the root system $\Phi$, which we also denote by $\rho$.

Let $\Phi_\rho$ denote the corresponding twisted root system (see \cite[Section~2]{EB&DM1}). This is a root system of type $B_{\ell-1}$, which can be realized in the Euclidean space $\mathbb{R}^{\ell-1}$ as
\[
    \Phi_\rho = \{ \pm \nu_i \pm \nu_j \mid 1 \leq i < j \leq \ell-1 \} \cup \{ \pm \nu_i \mid 1 \leq i \leq \ell-1 \},
\]
where $\{\nu_1, \dots, \nu_{\ell-1}\}$ is the standard basis of $\mathbb{R}^{\ell-1}$. The associated simple system $\Delta_\rho$ is given by
\[
    \Delta_\rho = \{ [\alpha_i] \coloneqq \nu_i - \nu_{i+1} \mid 1 \leq i \leq \ell-2 \} \cup \{ [\alpha_{\ell-1}] \coloneqq \nu_{\ell-1} \}.
\]

Furthermore, every root $[\alpha] \in \Phi_\rho$ is either of type $A_1$ or $A_1^2$. In particular, the simple roots $[\alpha_i] = \{\alpha_i\}$ for $1 \leq i \leq \ell-2$ are of type $A_1$, whereas the simple root $[\alpha_{\ell-1}] = \{\alpha_{\ell-1}, \alpha_\ell\}$ is of type $A_1^2$.

For later use, we define the roots
\[
    [\widetilde{\alpha}_i] \coloneqq \nu_i + \nu_{i+1} \qquad (1 \leq i \leq \ell-2).
\]
Observe that $[\alpha_i]$ and $[\widetilde{\alpha}_i]$ are strictly orthogonal for every $1 \leq i \leq \ell-2$.

Finally, we introduce the roots $[\beta_i] \in \Phi_\rho$ corresponding to the basis vectors $\nu_i$ for $1 \leq i \leq \ell-1$. Then the set $\{[\beta_1], \dots, [\beta_{\ell-1}]\}$ coincides precisely with the set of all positive short roots in $\Phi_\rho$. Moreover, each $[\beta_i]$ is of type $A_1^2$ and can be expressed explicitly as a sum of simple roots via
\[
    [\beta_i] = [\alpha_i] + [\alpha_{i+1}] + \cdots + [\alpha_{\ell-2}] + [\alpha_{\ell-1}].
\]


\subsection{The Lie algebra of type \texorpdfstring{$D_\ell \ (\ell \geq 4)$}{D-l}} \label{subsec:Lie_algebra}

Let $\mathcal{L} = \mathcal{L}(D_{\ell}, \mathbb{C})$ be the complex simple Lie algebra of type $D_{\ell}$ for $\ell \geq 4$. Recall that this Lie algebra $\mathcal{L}$ is isomorphic to $\mathfrak{so}_{2\ell}(\mathbb{C})$, which consists of all matrices $A \in M_{2\ell}(\mathbb{C})$ satisfying
\[
    A^{t}S + SA = 0,
\]
where
\[
    S = \begin{pmatrix}
        0 & I_{\ell} \\
        I_{\ell} & 0
    \end{pmatrix}.
\]
In other words, if we express $A$ in block form as $A = \begin{pmatrix} X & Y \\ Z & W \end{pmatrix}$ with blocks of size $\ell \times \ell$, this condition dictates that $W = -X^t$, $Y = -Y^t$, and $Z = -Z^t$.

\smallskip

Our primary objective is to construct a Chevalley basis $\{X_\alpha, H_{\alpha_i} \mid \alpha \in \Phi, \ \alpha_i \in \Delta\}$ that satisfies the conditions of \cite[Lemma~2.4]{EB&DM1}.

We begin by specifying the basis elements corresponding to the simple roots. 
For $1 \leq i \leq \ell-1$, we set 
\[
    X_{\alpha_i} = E_{i, i+1} - E_{\ell+i+1, \ell+i} \quad \text{and} \quad X_{-\alpha_i} = E_{i+1, i} - E_{\ell+i, \ell+i+1}.
\]
Computing the commutator $H_{\alpha_i} = [X_{\alpha_i}, X_{-\alpha_i}]$ yields:
\[
    H_{\alpha_i} = E_{i,i} - E_{i+1, i+1} - E_{\ell+i, \ell+i} + E_{\ell+i+1, \ell+i+1}.
\]

For the last simple root $\alpha_\ell$, we set:
\[
    X_{\alpha_\ell} = E_{\ell-1, 2\ell} - E_{\ell, 2\ell-1} 
    \quad \text{and} \quad 
    X_{-\alpha_\ell} = E_{2\ell, \ell-1} - E_{2\ell-1, \ell},
\]
which yields
\[
    H_{\alpha_\ell} = E_{\ell-1, \ell-1} + E_{\ell, \ell} - E_{2\ell-1, 2\ell-1} - E_{2\ell, 2\ell}.
\]

The non-trivial automorphism $\rho$ of the Dynkin diagram of type $D_\ell$ induces an automorphism of the Lie algebra $\mathcal{L}$ characterized by:
\[
    \rho(H_{\alpha_i}) = H_{\rho(\alpha_i)}, \qquad \rho(X_{\alpha_i}) = X_{\rho(\alpha_i)}, \qquad \text{and} \qquad \rho(X_{-\alpha_i}) = X_{-\rho(\alpha_i)}
\]
for all $\alpha_i \in \Delta$. A straightforward verification demonstrates that this automorphism is given explicitly by the conjugation map
\[
    A \mapsto Q A Q^{-1},
\]
where $Q$ is the symmetric orthogonal matrix defined as 
\[
    Q = E_{\ell, 2\ell} + E_{2\ell, \ell} + \sum_{i \notin \{\ell, 2\ell\}} E_{i,i} = \begin{pmatrix}
        I_{\ell-1} & 0 & 0 & 0 \\
        0 & 0 & 0 & 1 \\
        0 & 0 & I_{\ell-1} & 0 \\
        0 & 1 & 0 & 0
    \end{pmatrix}.
\]

Finally, we extend the definition of the root vectors to all remaining roots in $\Phi$. By computing the commutators among the simple root vectors, we obtain structure constants $N_{\alpha,\beta}$ that fulfill the requirements of \cite[Lemma~2.4]{EB&DM1}. The explicit formulas for all the root vectors are given by:
\begin{align*}
    X_{\varepsilon_i - \varepsilon_j} &= E_{i,j} - E_{\ell+j, \ell+i} \qquad \text{for } 1 \leq i \neq j \leq \ell, \\
    X_{\varepsilon_i + \varepsilon_j} &= E_{i, \ell+j} - E_{j, \ell+i} \qquad \text{for } 1 \leq i < j \leq \ell, \\
    X_{-\varepsilon_i - \varepsilon_j} &= E_{\ell+j, i} - E_{\ell+i, j} \qquad \text{for } 1 \leq i < j \leq \ell.
\end{align*}


\subsection{The groups \texorpdfstring{$\SO_{2\ell}(R)$}{SO(R)} and \texorpdfstring{$\SO_{2\ell}(R)^{\sigma}$}{SO(R)}} \label{subsec:SO(R)}

We define the special orthogonal group over the ring $R$ as
\[
    \SO_{2\ell} (R) = \{ A \in \SL_{2\ell} (R) \mid A^{t} S A = S \},
\]
where
\[
    S = \begin{pmatrix}
        0 & I_{\ell} \\
        I_{\ell} & 0
    \end{pmatrix}
\]
is the matrix defined in the previous subsection.

It is well known that this group is isomorphic to the Chevalley group of type $D_\ell$. 
To be precise, let $\pi_0$ be the natural $2\ell$-dimensional representation of $L(D_\ell,\mathbb C)$.
Its weight lattice is
\[
    \Lambda_{\pi_0}
    =
    \Lambda_r \oplus \mathbb{Z} \, \omega_1.
\]
The corresponding Chevalley group $G_{\pi_0}(D_\ell,R)$ is isomorphic to $\operatorname{SO}_{2\ell}(R)$. 
Moreover, the lattice $\Lambda_{\pi_0}$ is invariant under $\rho$, and hence $\rho$ induces a graph automorphism of $G_{\pi_0}(D_\ell,R)$.

The non-trivial automorphism $\rho$ of the Dynkin diagram of type $D_\ell$ induces an automorphism of order $2$ on the group $G_{\pi_0}(D_\ell, R)$, known as a graph automorphism, which we also denote by $\rho$. Similarly, an involution $\theta$ of the ring $R$ induces an automorphism of order $2$ on $G_{\pi_0}(D_\ell, R)$, called a ring automorphism, which we also denote by $\theta$.
Set $\sigma \coloneqq \rho \circ \theta = \theta \circ \rho$. Then $\sigma$ is an automorphism of $G_{\pi_0}(D_\ell, R)$ of order $2$. 
The subgroup of fixed points under this automorphism,
\[
    G_{\pi_0, \sigma} (D_\ell, R) = \{ A \in G_{\pi_0} (D_\ell, R) \mid \sigma(A) = A \},
\]
is called the twisted Chevalley group of type ${}^2 D_\ell$.

For every $r \in R$, we write $\bar{r} \coloneqq \theta(r)$, and for any matrix $A = (a_{ij}) \in M_{2\ell}(R)$, we apply this involution entrywise, writing $\overline{A} \coloneqq (\overline{a}_{ij})$. 

It is straightforward to verify that the automorphism $\sigma$ on $\SO_{2\ell}(R)$ is explicitly given by
\[
    \sigma (A) = Q \overline{A} Q^{-1},
\]
where $Q$ is the matrix defined in the previous subsection.
Consequently, the twisted Chevalley group $G_{\pi_0, \sigma}(D_\ell, R)$ is isomorphic to the group
\[
    \SO_{2\ell}(R)^{\sigma} \coloneqq \{ A \in \SO_{2\ell}(R) \mid \sigma(A) = A \} = \{ A \in \SO_{2\ell}(R) \mid Q \overline{A} Q^{-1} = A \}.
\]

By definition, $A \in \SO_{2\ell}(R)^{\sigma}$ implies that $A \in \SL_{2\ell}(R)$ satisfies $A^t S A = S$ and $Q \overline{A} Q^{-1} = A$. Together, these imply
\[
    \overline{A^t} \, P \, A = P,
\]
where $P \coloneqq SQ = QS$.

Now, we introduce some important elements of the group $\SO_{2\ell}(R)^{\sigma}$. 
We recall several crucial notations from \cite[Section~2]{EB&DM1}. 
Define
\[
    R_{\theta} = \{r \in R \mid \theta(r) = r \} 
    \qquad \text{and} \qquad
    R_{\theta}^{-} = \{ r \in R \mid \theta(r) = -r \}.
\]
Since $1/2 \in R$, we have 
\[
    R = R_{\theta} \oplus R_{\theta}^{-}.
\]
For $[\alpha] \in \Phi_\rho$, define the parameter rings
\[
    R_{[\alpha]} = \begin{cases}
        R_\theta & \text{if } [\alpha] \sim A_1, \\
        R & \text{if } [\alpha] \sim A_1^2,
    \end{cases}
    \qquad \text{and} \qquad
    R_{[\alpha]}^{\times} = \begin{cases}
        R_\theta \cap R^{\times} & \text{if } [\alpha] \sim A_1, \\
        R^{\times} & \text{if } [\alpha] \sim A_1^2.
    \end{cases}
\]
For $[\alpha] \in \Phi_\rho$ and $t \in R_{[\alpha]}$, we define the root elements
\[
    x_{[\alpha]}(t) = \begin{cases}
        I_{2 \ell} + t X_{\alpha} & \text{if } [\alpha] \sim A_1, \\
        I_{2 \ell} + t X_{\alpha} + \bar{t} X_{\bar{\alpha}} + t \bar{t} X_{\alpha} X_{\bar{\alpha}} & \text{if } [\alpha] \sim A_1^2.
    \end{cases} 
\]
To be precise, if $[\alpha] = \nu_i - \nu_j$ with $1 \leq i \neq j \leq \ell-1$ and $t \in R_\theta$, we have
\[
    x_{[\alpha]}(t) = I_{2 \ell} + t (E_{i,j}-E_{\ell+j, \ell+i}).
\]
If $[\alpha] = \nu_i + \nu_j$ with $1 \leq i < j \leq \ell-1$ and $t \in R_\theta$, we have
\[
    x_{[\alpha]}(t) = I_{2 \ell} + t(E_{i, \ell+j} - E_{j, \ell+i}),
    \qquad
    x_{-[\alpha]}(t) = I_{2 \ell} + t (E_{\ell+j, i} - E_{\ell+i, j}).
\]
Finally, if $[\alpha] = \nu_i$ with $1 \leq i \leq \ell-1$ and $t \in R$, we have
\begin{align*}
    x_{[\alpha]}(t) &= I_{2 \ell} + t (E_{i, \ell} - E_{2\ell, \ell+i}) + \bar{t} (E_{i, 2 \ell} - E_{\ell, \ell+i}) - t \bar{t} E_{i, \ell+i}, \\
    x_{-[\alpha]}(t) &= I_{2 \ell} + t (E_{\ell, i} - E_{\ell+i, 2\ell}) + \bar{t} (E_{2 \ell, i} - E_{\ell+i, \ell}) - t \bar{t} E_{\ell+i, i}.
\end{align*}

Next, for $[\alpha] \in \Phi_\rho$ and $t \in R_{[\alpha]}^{\times}$, we define the Weyl elements and torus elements respectively as
\[
    w_{[\alpha]}(t) = x_{[\alpha]}(t) x_{-[\alpha]}(-t^{-1}) x_{[\alpha]}(t)
    \qquad \text{and} \qquad
    h_{[\alpha]}(t) = w_{[\alpha]}(t) w_{[\alpha]}(-1).
\]

Let $\{ e_1, \dots, e_{2\ell} \}$ be the standard basis of $R^{2\ell}$. 
Define the basis subspaces 
\[
    B_i:= \langle e_{i}, e_{\ell+i} \rangle.
\]
To simplify calculations, we typically consider the aforementioned elements with respect to the subspace decomposition
\[
    R^{2\ell} = B_1 \oplus \dots \oplus B_{\ell}.
\]
In subsequent sections, we will predominantly utilize the elements $h_{[\alpha_i]}(-1)$ and $w_{[\alpha_i]}(1)$ for $1 \leq i \leq \ell-1$, $x_{[\alpha_i]}(t)$ ($t \in R_\theta$) for $1 \leq i \leq \ell-2$, and $x_{[\alpha_{\ell-1}]}(t)$ for $t \in R$.
To be precise, with respect to the above decomposition, these elements act as the identity on all blocks $B_j$ except for those specified below. 

For a long root $[\alpha_i]$ ($1 \leq i \leq \ell-2$) and $t \in R_\theta$, the root element $x_{[\alpha_i]}(t)$ restricted to $B_i \oplus B_{i+1}$ takes the block form
\[
    x_{[\alpha_i]}(t)|_{B_i \oplus B_{i+1}} = \begin{pmatrix}
        1 & 0 & t & 0 \\
        0 & 1 & 0 & 0 \\
        0 & 0 & 1 & 0 \\
        0 & -t & 0 & 1
    \end{pmatrix}.
\]
The Weyl element $w_{[\alpha_i]}(1)$ and the semisimple element $h_{[\alpha_i]}(-1)$ restricted to $B_i \oplus B_{i+1}$ are respectively given by
\[
    w_{[\alpha_i]}(1)|_{B_i \oplus B_{i+1}} = \begin{pmatrix}
        0 & 0 & 1 & 0 \\
        0 & 0 & 0 & 1 \\
        -1 & 0 & 0 & 0 \\
        0 & -1 & 0 & 0
    \end{pmatrix},
    \qquad
    h_{[\alpha_i]}(-1)|_{B_i \oplus B_{i+1}} = \begin{pmatrix}
        -I_2 & 0 \\
        0 & -I_2
    \end{pmatrix} = -I_4.
\]

For the short root $[\alpha_{\ell-1}]$ and $t \in R$, the root element $x_{[\alpha_{\ell-1}]}(t)$ restricted to $B_{\ell-1} \oplus B_\ell$ has the block form
\[
    x_{[\alpha_{\ell-1}]}(t)|_{B_{\ell-1} \oplus B_{\ell}} = \begin{pmatrix}
        1 & -t\bar{t} & t & \bar{t} \\
        0 & 1 & 0 & 0 \\
        0 & -\bar{t} & 1 & 0 \\
        0 & -t & 0 & 1
    \end{pmatrix}.
\]
The corresponding Weyl element $w_{[\alpha_{\ell-1}]}(1)$ restricted to $B_{\ell-1} \oplus B_\ell$ evaluates to
\[
    w_{[\alpha_{\ell-1}]}(1)|_{B_{\ell-1} \oplus B_{\ell}} = \begin{pmatrix}
        0 & -1 & 0 & 0 \\
        -1 & 0 & 0 & 0 \\
        0 & 0 & 0 & -1 \\
        0 & 0 & -1 & 0
    \end{pmatrix}.
\]
Note that squaring this Weyl element yields the identity; hence $h_{[\alpha_{\ell-1}]}(-1) = I_{2\ell}$.


\subsection{The groups \texorpdfstring{$\PSO_{2\ell}(R)$}{PSO(R)} and \texorpdfstring{$\PSO_{2\ell}(R)^{\sigma}$}{PSO(R)}} \label{subsec:PSO(R)}

We begin by defining the projective special orthogonal group over $\mathbb{C}$ as $\PSO_{2\ell}(\mathbb{C}) := \SO_{2\ell}(\mathbb{C}) / Z(\SO_{2\ell}(\mathbb{C}))$, where $Z(\SO_{2\ell}(\mathbb{C})) = \{ \pm I_{2\ell} \}$ is the center of the special orthogonal group. For a general commutative ring $R$, we define $\PSO_{2\ell}(R)$ as the $R$-points of the algebraic group $\PSO_{2\ell}(\mathbb{C})$.

Observe that we have the exact sequence of algebraic groups:
\[
    1 \to Z(\SO_{2\ell}(\mathbb{C})) \to \SO_{2\ell}(\mathbb{C}) \to \PSO_{2\ell}(\mathbb{C}) \to 1.
\]
Since throughout this paper $R$ is local, we have $\operatorname{Pic}(R)=0$. Hence the Kummer sequence gives
\[
    H^1_{\mathrm{\acute et}}(R,\mu_2)
    \simeq
    R^\times/(R^\times)^2.
\]
Taking $R$-points therefore yields the exact sequence
\[
    1 \longrightarrow 
    Z\bigl(\operatorname{SO}_{2\ell}(R)\bigr) \longrightarrow 
    \operatorname{SO}_{2\ell}(R) \longrightarrow 
    \operatorname{PSO}_{2\ell}(R) \longrightarrow 
    R^\times/(R^\times)^2,
\]
where
\[
    Z\bigl(\operatorname{SO}_{2\ell}(R)\bigr)
    =
    \{\lambda I_{2\ell}\mid \lambda^2=1\}.
\]
Thus the obstruction to lifting an element of $\operatorname{PSO}_{2\ell}(R)$ to
$\operatorname{SO}_{2\ell}(R)$ lies in $R^\times/(R^\times)^2$. Consequently, if every invertible element of $R$ is a square, we obtain the isomorphism $\PSO_{2\ell}(R) \cong \SO_{2\ell}(R)/ Z(\SO_{2\ell}(R))$. However, this isomorphism fails in general, even for fields (e.g., when $R = \mathbb{R}$). 

Consider the automorphism $\sigma$ of $\SO_{2\ell}(R)$ defined in the previous subsection. This naturally induces an automorphism of $\PSO_{2\ell}(R)$, which we also denote by $\sigma$. The subgroup of $\sigma$-fixed points in $\PSO_{2\ell}(R)$ is defined as:
\[
    \PSO_{2\ell}(R)^{\sigma} = \{ A \in \PSO_{2\ell}(R) \mid \sigma(A) = A \}.
\]

Let $G_{\ad}(D_\ell, R)$ and $G_{\ad, \sigma}(D_\ell, R)$ denote the adjoint Chevalley group and the adjoint twisted Chevalley group of type $D_\ell$, respectively. It is a well-known result that:
\[
    G_{\ad}(D_\ell, R) \cong \PSO_{2\ell}(R) \qquad \text{and} \qquad G_{\ad, \sigma}(D_\ell, R) \cong \PSO_{2\ell}(R)^{\sigma}.
\]

To understand the projective representations of these groups, it is useful to consider the group of orthogonal similarities over $R$, defined as:
\[
    \mathrm{GO}_{2\ell}(R) = \{ A \in \GL_{2\ell}(R) \mid A^t S A = \lambda S \text{ for some } \lambda \in R^\times \}.
\]
For our purposes, we restrict to the subgroup of proper orthogonal similarities, characterized by a compatible determinant:
\[
    \mathrm{GO}^+_{2\ell}(R) = \{ A \in \mathrm{GL}_{2\ell}(R) \mid A^t S A = \lambda S \text{ and } \det(A) = \lambda^\ell \text{ for some } \lambda \in R^\times \}.
\]

We define $\PGL_{2\ell}(R)$ as the $R$-points of the algebraic group $\PGL_{2\ell}(\mathbb{C})$. 
Since $R$ is local, its Picard group vanishes ($\mathrm{Pic}(R) = 0$), which ensures that evaluating the exact sequence of sheaves on $R$-points yields the identification $\PGL_{2\ell}(R) = \GL_{2\ell}(R) / Z(\GL_{2\ell}(R))$. 
Furthermore, it is a known fact that the natural image of $\PSO_{2\ell}(R)$ in $\PGL_{2\ell}(R)$ is isomorphic to the natural image of the group of proper orthogonal similarities, $\mathrm{GO}^+_{2\ell}(R)$, in $\PGL_{2\ell}(R)$.

Consequently, for a local ring $R$, we obtain the following explicit isomorphism:
\[
    \PSO_{2\ell}(R) \cong \{ [A] \in \PGL_{2\ell}(R) \mid A^t S A = \lambda S \text{ and } \det(A) = \lambda^\ell \text{ for some } \lambda \in R^\times \}.
\]
By imposing the $\sigma$-fixed condition, we finally deduce the analogous description for the twisted group:
\[
    \PSO_{2\ell}(R)^{\sigma} \cong \{ [A] \in \PGL_{2\ell}(R) \mid A^t S A = \lambda S, \, \det(A) = \lambda^\ell, Q \overline{A} Q^{-1} = \mu A \text{ for some } \lambda, \mu \in R^\times \}.
\]

By \cite[Lemma~2.7]{EB&DM1}, we have $E'_{\mathrm{ad}, \sigma}(D_\ell, R) \cong E'_{\pi_0, \sigma}(D_\ell, R) / Z\bigl(E'_{\pi_0, \sigma}(D_\ell, R)\bigr)$. Therefore, any class $[A] \in E'_{\mathrm{ad}, \sigma}(D_\ell, R)$ admits a representative $A \in E'_{\pi_0, \sigma}(D_\ell, R) \subseteq \operatorname{SO}_{2\ell}(R)$. 

In particular, every elementary generator $x_{[\alpha]}(t) \in E'_{\mathrm{ad}, \sigma}(D_\ell, R) \subseteq \operatorname{PSO}_{2\ell}(R)^{\sigma} \subseteq \operatorname{PGL}_{2\ell}(R)$ lifts to a corresponding element in $E'_{\pi_0, \sigma}(D_\ell, R) \subseteq \operatorname{SO}_{2\ell}(R)^{\sigma}$. Although we use the same notation $x_{[\alpha]}(t)$ for the elementary generators of both groups, they are formally distinct elements; the natural projection of $x_{[\alpha]}(t) \in E'_{\pi_0, \sigma}(D_\ell, R)$ is precisely $x_{[\alpha]}(t) \in E'_{\mathrm{ad}, \sigma}(D_\ell, R)$. 

Consequently, we slightly abuse notation within the adjoint group by identifying the generator with its representative class, writing $x_{[\alpha]}(t) = [x_{[\alpha]}(t)]$. The distinction between the group element and its lift in $\operatorname{GL}_{2\ell}(R)$ will remain clear from the context. We adopt an analogous convention for the Weyl elements $w_{[\alpha]}(t)$ and the torus elements $h_{[\alpha]}(t)$.


\subsection{\texorpdfstring{$\sigma$}{sigma}-symmetric matrices} \label{subsec:sigma_sym_matrices}

Throughout this subsection, we assume that $R$ is a local ring in which $2$ is invertible, and we denote its Jacobson radical by $J = \Rad(R)$.

Note that the map $\sigma(A) = Q \overline{A} Q^{-1}$ defines an involution on $M_{2\ell}(R)$.
We define the sets
\[
    M_{2\ell}(R)^{\sigma} \coloneqq \{ A \in M_{2\ell}(R) \mid \sigma(A) = A \}
    \quad \text{and} \quad
    \GL_{2\ell}(R)^{\sigma} \coloneqq \GL_{2\ell}(R) \cap M_{2\ell}(R)^{\sigma}.
\]
Elements of $M_{2\ell}(R)^{\sigma}$ will be called \emph{$\sigma$-symmetric} matrices, while elements of $\GL_{2\ell}(R)^{\sigma}$ will be referred to as invertible \emph{$\sigma$-symmetric} matrices. 

We now establish a lemma that will be used frequently in subsequent sections.

\begin{lemma}\label{lemma:Q-symmetrization}
    Let $A, B \in M_{2\ell}(R)^{\sigma}$ and suppose that 
    \[
        B = C^{-1} A C
    \]
    for some $C \in \GL_{2\ell}(R)$ such that $C \equiv I_{2\ell} \pmod{J}$. 
    Then there exists $\widetilde{C} \in \GL_{2\ell}(R)^{\sigma}$ such that $\widetilde{C} \equiv I_{2\ell} \pmod{J}$ and 
    \[
        B = \widetilde{C}^{-1} A \widetilde{C}.
    \]
\end{lemma}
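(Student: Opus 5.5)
The plan is to average $C$ over the involution $\sigma$. Since $1/2 \in R$, set
\[
    \widetilde{C} \coloneqq \tfrac{1}{2}\bigl(C + \sigma(C)\bigr) = \tfrac{1}{2}\bigl(C + Q\overline{C}Q^{-1}\bigr).
\]
Because $\sigma$ is an involution of $M_{2\ell}(R)$, we get $\sigma(\widetilde{C}) = \widetilde{C}$, so $\widetilde{C} \in M_{2\ell}(R)^{\sigma}$.

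First we check that $\widetilde{C} \equiv I_{2\ell} \pmod{J}$. The involution $\theta$ is a ring automorphism of a local ring, so it preserves the unique maximal ideal: $\theta(J) = J$. Hence $C \equiv I_{2\ell} \pmod{J}$ gives $\overline{C} \equiv I_{2\ell} \pmod J$. Conjugating by the permutation matrix $Q$, we get $\sigma(C) \equiv QQ^{-1} = I_{2\ell} \pmod{J}$. Averaging then gives $\widetilde{C} \equiv I_{2\ell} \pmod J$.

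Next we check that $\widetilde{C}$ is invertible. Its reduction modulo $J$ is the identity, so $\det \widetilde{C} \equiv 1 \pmod J$. Over a local ring this means $\det\widetilde{C} \in R^\times$, and therefore $\widetilde{C} \in \GL_{2\ell}(R)^{\sigma}$.

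It remains to show $B = \widetilde{C}^{-1} A \widetilde{C}$, which is equivalent to $A\widetilde{C} = \widetilde{C}B$. The hypothesis gives $AC = CB$. The map $\sigma(X) = Q\overline{X}Q^{-1}$ is multiplicative on $M_{2\ell}(R)$, since entrywise $\theta$ is a ring homomorphism and conjugation is multiplicative. Applying $\sigma$ to $AC = CB$ and using $\sigma(A)=A$, $\sigma(B)=B$ yields $A\,\sigma(C) = \sigma(C)\,B$. Adding this to $AC = CB$ and dividing by $2$ gives $A\widetilde{C} = \widetilde{C}B$, as required.

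No step is a genuine obstacle. The only points needing care are that $\theta$ preserves $J$ and that invertibility of $\widetilde{C}$ follows from the congruence modulo $J$. Both are immediate for a local ring with $1/2\in R$.
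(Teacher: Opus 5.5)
Your proposal is correct and follows the paper's proof: both take $\widetilde{C} = \frac{1}{2}\bigl(C + Q\overline{C}Q^{-1}\bigr)$, use $\overline{J} = J$ to get $\widetilde{C} \equiv I_{2\ell} \pmod{J}$ and hence invertibility, and then add $CB = AC$ to its image under $\sigma$. Your explicit checks that $\sigma$ is multiplicative and that $\theta$ preserves the maximal ideal fill in points the paper leaves implicit.
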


\begin{proof}
    Since $2$ is a unit in $R$, we can define
    \[
        \widetilde{C}
        \coloneqq
        \frac{1}{2} \bigl(C + \sigma(C)\bigr)
        =
        \frac{1}{2} \bigl(C + Q\overline{C}Q^{-1}\bigr).
    \]
    Because $\sigma$ is an involution (i.e., $\sigma^2 = \mathrm{id}$), we have $\sigma(\widetilde{C}) = \widetilde{C}$, which implies that $\widetilde{C} \in M_{2\ell}(R)^{\sigma}$.

    Moreover, since $C \equiv I_{2\ell} \pmod{J}$ and $\overline{J} = J$, it follows that $\sigma(C) \equiv I_{2\ell} \pmod{J}$. Consequently, $\widetilde{C} \equiv I_{2\ell} \pmod{J}$. 
    Because $J$ is the Jacobson radical, any matrix congruent to the identity modulo $J$ is invertible; hence, $\widetilde{C} \in \GL_{2\ell}(R)^{\sigma}$.

    From the hypothesis $B = C^{-1}AC$, we obtain
    \[
        CB = AC.
    \]
    Applying $\sigma$ to both sides and utilizing the assumptions $\sigma(A) = A$ and $\sigma(B) = B$, we get
    \[
        \sigma(C)B = A\sigma(C).
    \]
    Adding these two equations yields
    \[
        \widetilde{C}B = A\widetilde{C}.
    \]
    Since $\widetilde{C}$ is invertible, it follows that
    \[
        B = \widetilde{C}^{-1}A\widetilde{C},
    \]
    as required.
\end{proof}

In subsequent sections, whenever we apply a basis change matrix $C$ satisfying $C \equiv I_{2\ell} \pmod{J}$ to conjugate matrices that already belong to $M_{2\ell}(R)^{\sigma}$, we may replace $C$ by its $\sigma$-symmetrization
\[
    C^{\text{sym}}
    \coloneqq
    \frac{1}{2} \bigl(C + Q \overline{C} Q^{-1}\bigr).
\]
Therefore, we may assume without loss of generality that the conjugating matrix $C$ satisfies $C \in \GL_{2\ell}(R)^{\sigma}$, or equivalently,
\[
    C = Q \overline{C} Q^{-1}.
\]


\subsection{The main theorems} \label{subsec:MT}

We conclude this section by stating the principal results of the paper. We first recall the definitions of an \emph{inner automorphism}, \emph{strictly inner automorphism}, \emph{diagonal automorphism}, \emph{ring automorphism}, \emph{central automorphism}, and \emph{standard automorphism} from \cite[Section~2]{EB&DM1}. Adopting this terminology, we establish the following theorems.

\begin{thm}\label{MT_local1}
    Let $R$ be a local ring with $1/2$, and let $G = E'_{\text{ad}, \sigma}(\Phi,R)$ be an adjoint elementary twisted Chevalley group of type ${}^2 D_{\ell}$ for $\ell \geq 4$. 
    Then every automorphism of $G$ is standard. 
    More precisely, it can be written as a composition of a strictly inner automorphism and a ring automorphism of~$G$.  
\end{thm}

\begin{thm}\label{MT_local2}
    Let $R$ be a local ring with $1/2$, let $\pi$ be a
representation whose weight lattice $\Lambda_\pi$ is
$\rho$-invariant, and let $G = E'_{\pi, \sigma} (\Phi, R)$ be an elementary twisted Chevalley group of type ${}^2 D_{\ell}$ for $\ell \geq 4$. 
    Then every automorphism of $G$ is standard.
    More precisely, it can be expressed as a composition of a strictly inner, a diagonal, and a ring automorphism of~$G$.
\end{thm}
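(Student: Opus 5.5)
The plan is to deduce Theorem~\ref{MT_local2} from Theorem~\ref{MT_local1} along the lines of \cite[Theorem~3.3]{EB&DM1}. Let $\varphi$ be an automorphism of $G = E'_{\pi,\sigma}(\Phi,R)$.

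\textbf{Step 1: descend to the adjoint group.} By \cite[Lemma~2.7]{EB&DM1}, the quotient $G/Z(G)$ is isomorphic to $E'_{\mathrm{ad},\sigma}(\Phi,R)$. The center is characteristic, so $\varphi$ induces an automorphism $\bar\varphi$ of $E'_{\mathrm{ad},\sigma}(\Phi,R)$. By Theorem~\ref{MT_local1}, $\bar\varphi = i_{g}\circ\delta$, where $i_g$ is conjugation by some $g \in G_{\mathrm{ad},\sigma}(\Phi,R)$ and $\delta$ is a ring automorphism coming from a ring automorphism of $R$ that commutes with $\theta$.

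\textbf{Step 2: lift the standard part to $G$.} The ring automorphism $\delta$ lifts directly to a ring automorphism $\delta_\pi$ of $G$, since $\Lambda_\pi$ is $\rho$-invariant. Conjugation by $g$ lifts to an automorphism of $G$ that is the composition of a strictly inner automorphism and a diagonal automorphism. To see this, write $g$ in $G_{\mathrm{ad},\sigma}(\Phi,R)$ modulo the image of the elementary group. Over a local ring, a Gauss decomposition argument shows that $G_{\mathrm{ad},\sigma}$ equals the elementary subgroup times the torus. The torus factor is realized as a $\sigma$-invariant element of the extended torus, which acts on $V(\pi)$ diagonally by characters of $\Lambda_\pi$, and this defines a diagonal automorphism of $G$. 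Composing $\varphi$ with the inverses of these lifts gives an automorphism $\psi$ of $G$ that induces the identity on $G/Z(G)$. So $\psi(x) = x\,\chi(x)$ for a homomorphism $\chi\colon G \to Z(G)$.

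\textbf{Step 3: kill the central twist.} I expect this to be the only real obstacle. The group $G$ is perfect, since $E'_{\pi,\sigma}$ is generated by commutators of root elements; this uses $\ell\ge 4$ and $1/2\in R$, with the short-root generators obtained from commutator formulas in the $B_{\ell-1}$ twisted system. Because $\chi$ maps into an abelian group, perfectness forces $\chi$ to be trivial, so $\psi=\mathrm{id}$. Hence $\varphi$ is the composition of a strictly inner, a diagonal, and a ring automorphism. If perfectness needs care for the short roots of type $A_1^2$, I would instead check directly that $\psi$ fixes each $x_{[\alpha]}(t)$. Each such element is a commutator $[x_{[\beta]}(s), x_{[\gamma]}(u)]$ up to factors that $\psi$ already fixes, and this can be read off the explicit matrices of Section~\ref{subsec:SO(R)}.
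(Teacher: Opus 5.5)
Your proposal is correct and follows essentially the same route as the paper. The paper omits the proof and defers to \cite[Theorem~3.3]{EB&DM1}, whose argument is yours: pass to $G/Z(G)\cong E'_{\mathrm{ad},\sigma}(\Phi,R)$, apply Theorem~\ref{MT_local1}, lift the inner part via $G_{\mathrm{ad},\sigma}(\Phi,R)=E'_{\mathrm{ad},\sigma}(\Phi,R)\,T_{\mathrm{ad},\sigma}(\Phi,R)$ to a strictly inner automorphism composed with a diagonal one, lift the ring automorphism, and kill the leftover homomorphism $G\to Z(G)$ by perfectness of $G$.

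One phrase in Step~2 should be corrected: the extended-torus element realizing $h\in T_{\mathrm{ad},\sigma}(\Phi,R)$ does not in general act on the weight spaces of $V(\pi)$ through a character in $\mathrm{Hom}(\Lambda_\pi,R^\times)$. Already for $\pi_0$ it is a diagonal similitude whose multiplier need not be a square in $R$, which is exactly why diagonal automorphisms are needed in addition to strictly inner ones.
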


\begin{thm}\label{MT_local3}
    Let $R$ be a local ring with $1/2$, let $\pi$ be a
representation whose weight lattice $\Lambda_\pi$ is
$\rho$-invariant, and let $G = G_{\pi, \sigma} (\Phi, R)$ be a twisted Chevalley group of type ${}^2 D_{\ell}$ for $\ell \geq 4$. 
    Then every automorphism of $G$ is standard. 
    More precisely, it can be written as a composition of a strictly inner, a diagonal, a central, and a ring automorphism of~$G$.
\end{thm}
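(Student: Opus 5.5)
Since $E'_{\pi,\sigma}(\Phi,R)$ is already handled by Theorem~\ref{MT_local2}, I would prove Theorem~\ref{MT_local3} by reducing to that result, along the lines of \cite[Theorem~3.4]{EB&DM1}. Let $\varphi$ be an automorphism of $G = G_{\pi,\sigma}(\Phi,R)$ and write $E = E'_{\pi,\sigma}(\Phi,R)$. The plan has three steps: show that $E$ is characteristic in $G$; remove the standard part of $\varphi|_E$ by Theorem~\ref{MT_local2}; and show that what remains is a central automorphism.

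\textbf{Step 1: $E$ is characteristic in $G$.} For a local ring with $1/2$ and twisted rank $\ell-1\geq 3$, I would use two facts.
\begin{itemize}
\item[(i)] $G = E\cdot T_\sigma$, where $T_\sigma$ is the $\sigma$-fixed torus; this is the Abe--Suzuki / Garge--Makadiya decomposition for local rings.
\item[(ii)] $E$ is perfect, since every generator $x_{[\alpha]}(t)$ is a commutator of root elements. Here one uses that $2$ is invertible, together with the Chevalley commutator formulas in $\Phi_\rho$ of type $B_{\ell-1}$.
\end{itemize}
Since $T_\sigma$ is abelian and normalizes $E$, (i) gives $[G,G]\subseteq E$, and (ii) gives $E=[E,E]\subseteq[G,G]$. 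Hence $E=[G,G]$, so $\varphi(E)=E$.

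\textbf{Step 2: strip off the standard part.} By Theorem~\ref{MT_local2}, $\varphi|_E = i_g\circ d\circ \delta$, where $i_g$ is strictly inner, $d$ is diagonal and $\delta$ is a ring automorphism commuting with $\theta$. Each of these three maps is defined on all of $G$: $i_g$ is conjugation by $g\in E$, $d$ is conjugation by a $\sigma$-fixed torus element of the extended group (which normalizes $G$), and $\delta$ acts entrywise and preserves the $\sigma$-fixed condition because $\delta\theta=\theta\delta$. Let $\psi$ be this extension of $i_g\circ d\circ\delta$ to $G$. Then $\varphi' := \psi^{-1}\circ\varphi$ is an automorphism of $G$ that is the identity on $E$.

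\textbf{Step 3: $\varphi'$ is central.} Take $x\in G$ and $y\in E$. Since $E$ is normal in $G$, applying $\varphi'$ to $xyx^{-1}$ gives
\[
xyx^{-1}=\varphi'(xyx^{-1})=\varphi'(x)\,y\,\varphi'(x)^{-1}.
\]
Hence $\lambda(x):=x^{-1}\varphi'(x)$ centralizes $E$. The key claim is then $C_G(E)=Z(G)$. Given this, $\lambda$ is a homomorphism $G\to Z(G)$ and $\varphi'(x)=x\lambda(x)$, so $\varphi'$ is a central automorphism. Therefore $\varphi=\psi\circ\varphi'$ has the required form.

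\textbf{Main obstacle.} I expect the centralizer computation $C_G(E)=Z(G)$ to be the hardest step. I would work in the representation $\pi$. A matrix commuting with all $x_{[\alpha]}(1)$ and all $h_{[\alpha]}(t)$ must preserve the weight spaces and commute with the nilpotent root operators.

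First reduce modulo $J$. Over the residue field, the twisted group acts absolutely irreducibly on each $\rho$-orbit-compatible piece, so the reduction of a centralizing matrix is scalar. Then lift this to $R$ by the same weight-space argument used over the field. This uses that the differences $t - \theta(t)$ for $t\in R$ and the elements of $R_\theta$ separate weights, which is where $1/2\in R$ enters.

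A secondary obstacle is confirming the decomposition $G=E\,T_\sigma$ for arbitrary $\rho$-invariant $\Lambda_\pi$ over local rings. If this decomposition fails, I would fall back on the normal-subgroup description of \cite{SG&DM1} to show directly that $E$ is the largest perfect normal subgroup of $G$, which again makes it characteristic.
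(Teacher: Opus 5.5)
Your proposal follows the same route the paper relies on; the paper omits the proof and defers to \cite[Theorem~3.4]{EB&DM1}. In that route, $E'_{\pi,\sigma}(\Phi,R)$ is characteristic in $G$, Theorem~\ref{MT_local2} removes the strictly inner, diagonal and ring parts (each of which extends to $G$), and the remaining automorphism, trivial on $E'_{\pi,\sigma}(\Phi,R)$, is central because $C_G(E'_{\pi,\sigma}(\Phi,R))=Z(G)$.

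For that last identity, cite the known centralizer/center results for twisted Chevalley groups, or argue inside $G$ using that the centralizing element is $\sigma$-fixed and preserves the form (for $\pi_0$, the condition $g^tSg=S$ kills a $bQ$ term). Do not rely on your Schur-type weight-space sketch, which is not valid for arbitrary centralizing matrices. By Lemma~\ref{lemma:centralizer_of_x(1)}, $aI+bQ$ commutes with every $x_{[\alpha]}(1)$. Moreover, if $\theta$ induces the identity on $k$, then $e_\ell-e_{2\ell}$ is fixed by the reduced group, so the natural module over $k$ is reducible and a centralizing matrix need not reduce to a scalar.
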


The proof of Theorem~\ref{MT_local1} is carried out in Sections~\ref{sec:outline_of_main_thm}--\ref{sec:image_of_x(t)}.
In Section~\ref{sec:outline_of_main_thm}, we outline the overall argument and reduce the proof to several key steps, which are then established in Sections~\ref{sec:image_of_h(-1)}--\ref{sec:image_of_x(t)}. 
While our general strategy closely parallels that of \cite[Theorem~3.2]{EB&DM1}, the underlying matrix computations differ substantially. We therefore present these calculations in full detail, adhering to the notation of \cite{EB&DM1} to facilitate a seamless comparison.

Once Theorem~\ref{MT_local1} is proved, the proofs of Theorems~\ref{MT_local2} and~\ref{MT_local3} follow exactly the same lines of argument as those of \cite[Theorems~3.3 and 3.4]{EB&DM1}, respectively, and are therefore omitted.


\section{Outline of the Proof of Theorem~\ref{MT_local1}} \label{sec:outline_of_main_thm}


Let $R$ be a local ring with unity. 
Let $J = \Rad(R)$ be the Jacobson radical of $R$ (which is also its unique maximal ideal), and let $k \coloneqq R/J$ denote the residue field. 
Since $J$ is $\theta$-invariant, the automorphism $\theta$ induces a well-defined automorphism on the field~$k$. 
Consequently, the groups $G_{\text{ad}, \sigma}(\Phi, k)$ and $E'_{\text{ad}, \sigma}(\Phi, k)$ are well-defined. 


\begin{lemma}[\text{\cite[Lemma~4.1]{EB&DM1}}] \label{lemma:maximal_normal_subgroup}
    \normalfont    
    Let $N_J = E'_{\text{ad}, \sigma} (\Phi, R) \cap G_{\text{ad}, \sigma} (\Phi, J)$. Then 
    \begin{enumerate}[(a)]
        \item $N_J$ is the (unique) greatest proper normal subgroup of $E'_{\text{ad}, \sigma} (\Phi, R)$.
        \item $E'_{\text{ad}, \sigma} (\Phi, R)/N_J \cong E'_{\text{ad}, \sigma} (\Phi, k).$
    \end{enumerate}
\end{lemma}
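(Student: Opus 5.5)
The plan has three parts. First, identify $N_J$ as the kernel of the reduction map. Then use the known normal subgroup structure of $E'_{\text{ad},\sigma}(\Phi,R)$ to show that every normal subgroup is either the whole group or sits inside $N_J$. Part (b) comes out of the first step, and part (a) out of the second.

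Let $\rho_J\colon G_{\text{ad},\sigma}(\Phi,R)\to G_{\text{ad},\sigma}(\Phi,k)$ be the homomorphism induced by $R\to k=R/J$. It is well defined because $J$ is $\theta$-invariant, so reduction commutes with $\sigma$. By definition $G_{\text{ad},\sigma}(\Phi,J)=\ker\rho_J$, hence $N_J=\ker(\rho_J|_{E'})$ is normal in $E'=E'_{\text{ad},\sigma}(\Phi,R)$.

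For (b), I will show that $\rho_J(E')=E'_{\text{ad},\sigma}(\Phi,k)$. The generators $x_{[\alpha]}(t)$ map to $x_{[\alpha]}(\bar t)$. Moreover $R_\theta\to k_\theta$ and $R\to k$ are surjective: since $1/2\in R$, an element $s\in k_\theta$ lifts to some $r$, and then $(r+\theta(r))/2\in R_\theta$ is also a lift. So every generator of $E'_{\text{ad},\sigma}(\Phi,k)$ is hit, and (b) follows from the isomorphism theorem.

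For (a), first note that $N_J$ is proper: $x_{[\alpha]}(1)\notin N_J$. Now let $H\trianglelefteq E'$ with $H\not\subseteq N_J$. I will invoke the sandwich classification of normal subgroups of twisted elementary groups over rings with $1/2$ (Abe, Suzuki, and \cite{SG&DM1}). For each $\sigma$-invariant ideal (more precisely, each admissible pair) $I$, it gives
\[
E'_\sigma(\Phi,R,I)\subseteq H\subseteq G_\sigma(\Phi,R,I),
\]
where $G_\sigma(\Phi,R,I)$ is the full congruence subgroup, i.e.\ the preimage of the centre modulo $I$. Since $R$ is local, every proper $\theta$-invariant ideal lies in $J$. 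If $I\subseteq J$, then in the adjoint group the centre of $G_{\text{ad},\sigma}(\Phi,k)$ is trivial, so $G_\sigma(\Phi,R,I)\subseteq G_{\text{ad},\sigma}(\Phi,J)$ and hence $H\subseteq N_J$, a contradiction. Therefore $I=R$ and $H\supseteq E'_\sigma(\Phi,R,R)=E'$. This gives maximality and, at the same time, uniqueness of the greatest proper normal subgroup.

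The main obstacle is making sure the classification genuinely applies to the adjoint twisted group of type ${}^2D_\ell$ with $\ell\ge4$ over a local ring. In particular, for the short roots of type $A_1^2$ the relevant "ideal" is really a pair of ideals (a form parameter), and one has to check that every such pair other than the full one is contained in $J$. I will check this by noting that the pair is determined by the subgroup $H\cap\{x_{[\alpha]}(t)\}$. If this subgroup contains some $x_{[\alpha]}(u)$ with $u$ a unit, then commutator relations with other root subgroups (available since $\ell\ge4$, with $2$ invertible) generate $x_{[\beta]}(t)$ for all $t$ and all $\beta$, so $H=E'$. Alternatively, one can cite \cite[Lemma~4.1]{EB&DM1} directly: its proof uses only the local-ring hypothesis and the normal subgroup description, both of which hold verbatim here.
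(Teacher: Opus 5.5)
Your proposal is correct and follows essentially the same route as the paper: the paper does not reprove this lemma but imports it from \cite[Lemma~4.1]{EB&DM1}, and, as its introduction notes, the argument there is exactly reduction modulo $J$ combined with the normal subgroup (sandwich) classification of \cite{SG&DM1}. One step deserves to be stated explicitly. To get $G_\sigma(\Phi,R,I)\subseteq G_{\mathrm{ad},\sigma}(\Phi,J)$, you need the image of the centre of $G_{\mathrm{ad},\sigma}(\Phi,R/I)$ in $G_{\mathrm{ad},\sigma}(\Phi,k)$ to be central; this holds because reduction is surjective on these groups for local rings (or, alternatively, because $E'_{\mathrm{ad},\sigma}(\Phi,k)$ has trivial centralizer in $G_{\mathrm{ad},\sigma}(\Phi,k)$).
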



Let $\varphi \in \operatorname{Aut}(E'_{\text{ad}, \sigma}(\Phi, R))$ be an arbitrary automorphism. By part~(a) of Lemma~\ref{lemma:maximal_normal_subgroup}, the normal subgroup $N_J$ is invariant under $\varphi$. Thus, $\varphi$ induces an automorphism $\bar{\varphi}$ on the quotient $E'_{\text{ad}, \sigma}(\Phi, R)/N_J$. By identifying this quotient with $E'_{\text{ad}, \sigma}(\Phi, k)$ via the isomorphism in part~(b), we may view $\bar{\varphi}$ as an automorphism of $E'_{\text{ad}, \sigma}(\Phi, k)$:
\[
    \bar{\varphi} \colon E'_{\text{ad}, \sigma}(\Phi, k) \longrightarrow E'_{\text{ad}, \sigma}(\Phi, k).
\]

By classical results of Steinberg (see \cite[Theorem~2.10]{EB&DM1}), the automorphism $\bar{\varphi}$ decomposes as $\bar{\varphi} = i_g \circ f$, where $i_g$ is the inner automorphism induced by some element $g \in G_{\text{ad}, \sigma}(\Phi, k)$, and $f$ is a field automorphism of $E'_{\text{ad}, \sigma}(\Phi, k)$.

Since $R$ is a local ring, by \cite[Lemma~2.6]{EB&DM1}, the natural map 
\[
    \lambda_J \colon G_{\text{ad}, \sigma}(\Phi, R) \to G_{\text{ad}, \sigma}(\Phi, k)
\]
is surjective. 
Therefore, the element $g \in G_{\mathrm{ad},\sigma}(\Phi,k)$ admits a lift $g_1 \in G_{\mathrm{ad},\sigma}(\Phi,R)$ such that $\lambda_J(g_1) = g$. 
Now, define
\[
    \varphi_1 \coloneqq i_{g_1}^{-1}\circ \varphi .
\]
Then $\varphi_1$ is an automorphism of $E'_{\mathrm{ad},\sigma}(\Phi,R)$ such that the induced automorphism $\overline{\varphi}_1$ on the quotient group $E'_{\mathrm{ad},\sigma}(\Phi,k)$ is a field automorphism of the elementary adjoint twisted Chevalley group $E'_{\mathrm{ad},\sigma}(\Phi,k)$.


\subsection{Reduction to Key Steps}

To prove Theorem~\ref{MT_local1}, we reduce the argument to verifying the following steps, each of which will be established in subsequent sections. 
Let 
\[
    \GL_{2\ell}(R, J) \coloneqq \{ A \in \GL_{2\ell} (R) \mid A \equiv I \pmod{J} \}
\]
denote the principal congruence subgroup of level $J$.

\medskip

\noindent \textbf{Step 1.} Find an element $g_2 \in \GL_{2\ell}(R, J)$ such that the map $\varphi_2 \coloneqq i_{[g_2]^{-1}} \circ \varphi_1$ satisfies
\[
    \varphi_2 \left(h_{[\alpha]}(- 1)\right) = h_{[\alpha]}(- 1) \qquad \text{for all } [\alpha] \in \Phi_\rho.
\]

\noindent \textbf{Step 2.} Find an element $g_3 \in \GL_{2\ell}(R, J)$ such that the map $\varphi_3 \coloneqq i_{[g_3]^{-1}} \circ \varphi_2$ satisfies
\[
    \varphi_3 \left(w_{[\alpha]}(1)\right) = w_{[\alpha]}(1) \qquad \text{for all } [\alpha] \in \Phi_\rho.
\]

\noindent \textbf{Step 3.} Find an element $g_4 \in \GL_{2\ell}(R, J)$ such that the map $\varphi_4 \coloneqq i_{[g_4]^{-1}} \circ \varphi_3$ satisfies
\[
    \varphi_4 \left(x_{[\alpha]}(1)\right) = x_{[\alpha]}(1) \qquad \text{for all } [\alpha] \in \Phi_\rho.
\]

\medskip

Assuming the validity of these steps, we now proceed with the proof of Theorem~\ref{MT_local1}. 
Let $C_1 \coloneqq g_2 g_3 g_4 \in \GL_{2\ell}(R, J)$. 
It follows that
\[
    \varphi_1 = i_{[C_1]} \circ \varphi_4.
\]
By Lemma~\ref{lemma:Q-symmetrization}, we may assume without loss of generality that 
\[
    C_1 \in \GL_{2\ell}(R, J)^{\sigma} \coloneqq \GL_{2\ell}(R)^{\sigma} \cap \GL_{2\ell}(R, J).
\]

\begin{lemma}\label{lemma:centralizer_of_x(1)}
    Let $M \in \GL_{2\ell}(R,J)^{\sigma}$. 
    Suppose that for every $[\alpha] \in \Phi_{\rho}$, there exists a unit $\lambda_{[\alpha]} \in R^{\times}$ such that 
    \[
        x_{[\alpha]}(1) \, M = \lambda_{[\alpha]} \, M \, x_{[\alpha]}(1).
    \]
    Then $\lambda_{[\alpha]} = 1$ for all $[\alpha] \in \Phi_{\rho}$, and the matrix $M$ takes the form 
    \[
        M = aI + bQ,
    \]
    where $Q$ is the matrix introduced in Section~\ref{sec:G(R)_and_MT}, and the coefficients $a,b \in R_{\theta}$ satisfy 
    \[
        a \equiv 1 \pmod{J} \quad \text{and} \quad b \in J.
    \]

    Conversely, any matrix of the form $aI+bQ$ with $a,b \in R_{\theta}$ commutes with the standard root elements $x_{[\alpha]}(1)$ for all $[\alpha] \in \Phi_{\rho}$.
\end{lemma}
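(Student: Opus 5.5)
First I would show that every $\lambda_{[\alpha]}$ equals $1$. Each $x_{[\alpha]}(1)$ is unipotent: $x_{[\alpha]}(1) - I$ is nilpotent. Rewrite the relation as $M^{-1} x_{[\alpha]}(1) M = \lambda_{[\alpha]} x_{[\alpha]}(1)$ and compare characteristic polynomials, or simply traces. The left side has trace $2\ell$ and the right side has trace $2\ell\lambda_{[\alpha]}$. Comparing traces alone needs $2\ell$ to be invertible, which we do not have. Instead I would use determinants, which give $\lambda_{[\alpha]}^{2\ell}=1$, and then eigenvalues: $\lambda_{[\alpha]} x_{[\alpha]}(1) - \lambda_{[\alpha]} I$ is nilpotent, so $(\lambda_{[\alpha]}-1)I$ is a difference of two commuting nilpotent-type expressions. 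Concretely, $M^{-1}(x-I)M = \lambda x - I = \lambda(x-I) + (\lambda-1)I$. The left side is nilpotent, and $\lambda(x-I)$ is nilpotent and commutes with the scalar, so $(\lambda-1)I$ is nilpotent. Hence $\lambda-1 \in J$.

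Next I compare a single entry in the off-diagonal pattern. Write $M = I + N$ with $N \in M_{2\ell}(J)$ and $x = I + X$. The relation becomes $XM - \lambda MX = (\lambda-1)M$. Reducing modulo $J$ is useless, so I work with the $(i,i)$-type positions instead. Take $[\alpha] = [\alpha_1]$, where $X = E_{1,2} - E_{\ell+2,\ell+1}$. The $(1,2)$ entry of $(\lambda-1)M$ is $(\lambda-1)m_{12}$. Better is the entry $(2,1)$: the left side $XM - \lambda MX$ has row $2$ of $X$ zero, so that entry is $-\lambda (MX)_{21} = -\lambda m_{2,1}\cdot 0$, since column $1$ of $X$ is zero. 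This gives $(\lambda-1)m_{21} = 0$, which is useless because $m_{21}\in J$. I would therefore use the diagonal entry $(2,2)$ instead: row $2$ of $X$ is zero, so the left side is $-\lambda (MX)_{22} = -\lambda m_{21}$. We get $(\lambda-1)m_{22} = -\lambda m_{21}$. Similarly the $(1,1)$ entry gives $m_{21} - 0 = (\lambda-1)m_{11}$. Combining the two yields $(\lambda-1)(m_{22}+\lambda m_{11}) = 0$. Since $m_{22}+\lambda m_{11} \equiv 2 \pmod J$ is a unit, $\lambda = 1$. The analogous choice of a pair of diagonal entries works for every $[\alpha]$, including the short roots, using $1/2\in R$.

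With all $\lambda_{[\alpha]}=1$, $M$ commutes with every $x_{[\alpha]}(1)$, hence with the subring generated by the nilpotent matrices $X_{[\alpha]} = x_{[\alpha]}(1)-I$ over $\mathbb{Z}[1/2]$. For long roots $x_{[\alpha]}(1)-I = X_\alpha$. For short roots $x-I = X_\alpha + X_{\bar\alpha} + X_\alpha X_{\bar\alpha}$. From $x_{[\alpha]}(1)^{\pm1}$ we get $X_\alpha+X_{\bar\alpha}$ as $\frac12(x - x^{-1})$, since the quadratic term cancels. So $M$ centralizes $X_\alpha$ for $\alpha = \varepsilon_i \pm \varepsilon_j$ with $i,j<\ell$, together with the symmetrized $X_{\varepsilon_i-\varepsilon_\ell}+X_{\varepsilon_i+\varepsilon_\ell}$ and their negatives. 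I would then compute the centralizer entrywise. Commuting with $E_{i,j}-E_{\ell+j,\ell+i}$ for all $i\ne j<\ell$ and with the $\nu_i\pm\nu_j$ elements forces $M$ restricted to $\langle e_1,\dots,e_{\ell-1},e_{\ell+1},\dots,e_{2\ell-1}\rangle$ to be scalar $a$ and to have no mixing with $e_\ell, e_{2\ell}$. This is the standard fact that the centralizer of $\mathfrak{so}_{2\ell-2}$ acting on its natural module is scalars when $\ell-1\ge 3$. Commuting with the short-root elements then forces the $2\times2$ block on $\langle e_\ell,e_{2\ell}\rangle$ to be $\begin{pmatrix} a & b\\ b & a\end{pmatrix}$, using that these elements map $e_i$ to $e_\ell+e_{2\ell}$. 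This gives $M = aI + bQ$. The conditions $\sigma(M)=M$ and $M\equiv I$ then yield $a,b\in R_\theta$, $a\equiv 1$ and $b\in J$.

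The converse is immediate. $Q$ commutes with $x_{[\alpha]}(1)$ because $Q$ swaps $e_\ell \leftrightarrow e_{2\ell}$, $x_{[\alpha]}(1)$ is $\sigma$-fixed, and $\bar 1 = 1$, so $Q x Q^{-1} = x$. I expect the main obstacle to be the careful entrywise centralizer computation in the short-root block and getting the exact shape $\begin{pmatrix} a&b\\b&a\end{pmatrix}$.
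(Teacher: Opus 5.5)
Your argument is correct in substance, but it takes a different route from the paper. The paper first works with $w_{[\alpha]}(1)=x_{[\alpha]}(1)x_{-[\alpha]}(1)^{-1}x_{[\alpha]}(1)$ and $h_{[\alpha]}(-1)=w_{[\alpha]}(1)^2$. After showing the resulting scalars are $1$, it proceeds in stages:
commuting with the diagonal involutions $h_{[\alpha_i]}(-1)$ makes $M$ block diagonal on $B_1\oplus\cdots\oplus B_\ell$;
the long simple Weyl elements force equal blocks on $B_1,\dots,B_{\ell-1}$;
and $w_{[\alpha_{\ell-1}]}(1)$ forces every block to commute with $J_2$.
Only then does it use $x_{[\alpha_i]}(1)$ and $x_{[\alpha_{\ell-1}]}(1)$, to get $\lambda=1$, $q=0$ and $p=r+s$.

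You instead dispose of each $\lambda_{[\alpha]}$ directly, linearize ($x-I$ for long roots, $\tfrac12(x-x^{-1})$ for short roots), and reduce to a centralizer computation. This avoids the $w$/$h$ scalar bookkeeping, at the cost of using the hypothesis for all roots rather than only $\pm$ simple ones. Over $R$ you cannot quote Schur's lemma for $\mathfrak{so}_{2\ell-2}$, but the direct argument is short. For distinct $i,j,k\le \ell-1$ one has $(E_{ij}-E_{\ell+j,\ell+i})(E_{jk}-E_{\ell+k,\ell+j})=E_{ik}$; this is where $\ell\ge 4$ is used. From these and the $\nu_i\pm\nu_j$ vectors you obtain every matrix unit of $V'=\langle e_1,\dots,e_{\ell-1},e_{\ell+1},\dots,e_{2\ell-1}\rangle$. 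Hence $M$ is a scalar on $V'$ and preserves $V''=\langle e_\ell,e_{2\ell}\rangle$. Your converse via $QxQ^{-1}=\sigma(x)=x$ (the entries of $x_{[\alpha]}(1)$ are integers) is cleaner than the paper's block check, and it holds for all $a,b\in R$.

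Two details need correcting.

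First, for the short roots your ``analogous pair of diagonal entries'' is asserted rather than checked, and the naive analogue does not close up. The diagonal equations at $e_{\ell-1},e_{2\ell-1},e_\ell,e_{2\ell}$ involve different off-diagonal entries of $M$. The uniform fix, which needs neither $1/2$ nor your preliminary step $\lambda\equiv 1 \pmod J$, is to take a diagonal position $(j,j)$ with $e_j$ outside the support of $X=x_{[\alpha]}(1)-I$. Such a position exists because $X$ involves only two of the $\ell\ge 4$ blocks $B_r$. There $(XM)_{jj}=(MX)_{jj}=0$, so $(\lambda-1)m_{jj}=0$ and $\lambda=1$.

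Second, the block of $M$ on $V''$ is not $\begin{pmatrix} a&b\\ b&a\end{pmatrix}$ with the same $a$ as on $V'$. Evaluate $MY=YM$ for $Y=\tfrac12\bigl(x_{-[\beta_i]}(1)-x_{-[\beta_i]}(1)^{-1}\bigr)$ on $e_i,e_\ell,e_{2\ell}$. This gives the block $\begin{pmatrix} c&d\\ d&c\end{pmatrix}$ with $c+d=a$, which is exactly $M=cI+dQ$, since $Q$ is the identity on $V'$. With your block and scalar $a$ on $V'$, $M$ would not be of the form $a'I+b'Q$ unless $b=0$.
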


\begin{proof}
    Since $w_{[\alpha]}(1) = x_{[\alpha]}(1) x_{-[\alpha]}(1)^{-1} x_{[\alpha]}(1)$ and $h_{[\alpha]}(-1) = w_{[\alpha]}(1)^2$ for every root $[\alpha] \in \Phi_\rho$, we have
    \[
        M \, w_{[\alpha]}(1) = \mu_{[\alpha]} \, w_{[\alpha]}(1) \, M,
        \qquad \text{and} \qquad
        M \, h_{[\alpha]}(-1) = \mu_{[\alpha]}^2 \, h_{[\alpha]}(-1) \, M
    \]
    for some unit $\mu_{[\alpha]} \in R^{\times}$ such that $\mu_{[\alpha]} \equiv 1 \pmod{J}$.

    Since $M \equiv I_{2\ell} \pmod{J}$, a simple calculation using the above identities deduces that $\mu_{[\alpha]} = 1$. Furthermore, with respect to the basis decomposition $R^{2\ell} = B_1 \oplus \dots \oplus B_{\ell}$, the matrix $M$ has the block diagonal form
    \[
        M = \operatorname{diag} (M', \dots, M', M''),
    \]
    where $M' = p I_2 + q J_2$ and $M'' = r I_2 + s J_2$ with $J_2 = \begin{pmatrix}
        0 & 1 \\
        1 & 0
    \end{pmatrix}$ and $p, q, r, s \in R_\theta$.

    Now, consider the relations
    \[
        x_{[\alpha_i]}(1) \, M = \lambda_{[\alpha_i]} \, M \, x_{[\alpha_i]}(1)
    \]
    for all long simple roots $[\alpha_i]$ ($i = 1, \dots, \ell-2$) of type $A_1$. 
    Observe that on the subspace $B_i \oplus B_{i+1}$, the element $x_{[\alpha_i]}(1)$ has the block form
    \[
        x_{[\alpha_i]}(1)|_{B_i \oplus B_{i+1}} =\begin{pmatrix}
            I_2 & B \\
            C & I_2
        \end{pmatrix},
    \]
    where $B = \begin{pmatrix}
        1 & 0 \\
        0 & 0
    \end{pmatrix}$ and $C = \begin{pmatrix}
        0 & 0 \\
        0 & -1
    \end{pmatrix}$. 
    Comparing the entries of the aforementioned relation restricted to this subspace, we obtain 
    \[
        M' = \lambda_{[\alpha_i]} M',
        \qquad
        M' B = \lambda_{[\alpha_i]} B M',
        \qquad
        M' C = \lambda_{[\alpha_i]} C M'.
    \]
    Since $M' \equiv I_2 \pmod{J}$, we conclude that $\lambda_{[\alpha_i]} = 1$ for all $i = 1, \dots, \ell-2$. By utilizing the previously established form of $M'$ alongside $B$ and $C$, we deduce that $q = 0$. Thus, $M'$ simplifies to
    \[
        M' = p I_2.
    \]

    Finally, consider the relation 
    \[
        x_{[\alpha_{\ell-1}]}(1) \, M = \lambda_{[\alpha_{\ell-1}]} \, M \, x_{[\alpha_{\ell-1}]}(1)
    \]
    for the short simple root $[\alpha_{\ell-1}]$ of type $A_1^2$. 
    On the subspace $B_{\ell-1} \oplus B_{\ell}$, the element $x_{[\alpha_{\ell-1}]}(1)$ takes the form
    \[
        x_{[\alpha_{\ell-1}]}(1)|_{B_{\ell-1} \oplus B_{\ell}} = \begin{pmatrix}
            1 & -1 & 1 & 1 \\
            0 & 1 & 0 & 0 \\
            0 & -1 & 1 & 0 \\
            0 & -1 & 0 & 1
        \end{pmatrix},
    \]
    while $M$ is given by
    \[
        M|_{B_{\ell-1} \oplus B_{\ell}} = \begin{pmatrix}
            p & 0 & 0 & 0 \\
            0 & p & 0 & 0 \\
            0 & 0 & r & s \\
            0 & 0 & s & r
        \end{pmatrix}.
    \]
    A direct calculation using the relation 
    \[
        x_{[\alpha_{\ell-1}]}(1) \, M = \lambda_{[\alpha_{\ell-1}]} \, M \, x_{[\alpha_{\ell-1}]}(1)
    \]
    yields $\lambda_{[\alpha_{\ell-1}]} = 1$ and $p = r + s$.

    Consequently, the final form of $M$ is given by
    \[
        M = \operatorname{diag}((r+s)I_2, \dots, (r+s)I_2, r I_2 + s J_2) = r I_{2\ell} + s Q.
    \]
    By renaming the coefficients, we obtain 
    \[
        M = a I_{2\ell} + b Q, \qquad \text{where } a,b \in R_\theta.
    \]
    Since $M \equiv I_{2\ell} \pmod{J}$, it inherently follows that $a \equiv 1 \pmod{J}$ and $b \in J$.

    \smallskip

    Conversely, let $M = a I_{2\ell} + b Q \in \GL_{2\ell}(R)^{\sigma}$ where $a, b \in R_\theta$. Observe that on every subspace $B_r$ with $1 \leq r \leq \ell-1$, the matrix $M$ acts via the scalar $a + b$, while on $B_{\ell}$ it acts as $a I_2 + b J_2$. 
    For every long root $[\alpha] = \pm \nu_i \pm \nu_j \in \Phi_\rho$ (of type $A_1$), the corresponding root elements $x_{[\alpha]}(1)$ are supported on pairs
    \[
        B_i \oplus B_j, \qquad 1 \leq i < j \leq \ell-1.
    \]
    Therefore, $x_{[\alpha]}(1)$ commutes with $M$ because $M$ acts on both summands by the same scalar. 
    For a short root $[\alpha] = \nu_i \in \Phi_\rho$ (of type $A_1^2$), the restriction of $M$ to $B_i \oplus B_{\ell}$ has the block diagonal form
    \[
        \begin{pmatrix}
            (a+b) I_2 & 0 \\
            0 & a I_2 + b J_2
        \end{pmatrix},
    \]
    and a direct computation confirms that this matrix commutes with the corresponding block of $x_{[\alpha]}(1)$. Hence, $M$ commutes with $x_{[\alpha]}(1)$ for every root $[\alpha] \in \Phi_\rho$, as desired.
\end{proof}

\begin{rmk}
    For $a, b \in R$, consider the matrix $M = a I_{2\ell} + b Q \in \GL_{2\ell}(R)$. Observe that $M \in \GL_{2\ell}(R)^{\sigma}$ if and only if $a, b \in R_\theta$.
\end{rmk}

\begin{prop}\label{prop:normalization_of_C} 
    Let $C \in \GL_{2\ell}(R,J)^{\sigma}$, and suppose that
    \[
        [C] \, x_{[\alpha]}(1) \, [C]^{-1} \in G_{\ad,\sigma}(D_{\ell},R)
    \]
    for every $[\alpha] \in \Phi_{\rho}$. 
    Then there exists an element $d \in J \cap R_{\theta}$ such that for the matrix
    \[
        C' = C \, (I_{2\ell} + dQ) \in \GL_{2\ell}(R,J)^{\sigma},
    \]
    one has
    \[
        [C'] \in G_{\ad,\sigma}(D_{\ell},R).
    \]
\end{prop}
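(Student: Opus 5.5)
The approach is to translate the hypothesis into a statement about the Gram matrix of $S$ in the basis given by $C$, apply Lemma~\ref{lemma:centralizer_of_x(1)} to that Gram matrix, and then kill the $Q$-component by right multiplication with $I_{2\ell}+dQ$.

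\textbf{Step 1: the Gram matrix.} Fix $[\alpha]\in\Phi_\rho$ and write $x=x_{[\alpha]}(1)$. We use its lift in $E'_{\pi_0,\sigma}(D_\ell,R)\subseteq \SO_{2\ell}(R)$. By the explicit description of $\PSO_{2\ell}(R)^\sigma$ inside $\PGL_{2\ell}(R)$, the hypothesis gives
\[
(CxC^{-1})^t S (CxC^{-1})=\mu_{[\alpha]} S \quad\text{for some } \mu_{[\alpha]}\in R^\times .
\]
Put $M\coloneqq S^{-1}C^tSC$. Since $x\in\SO_{2\ell}(R)$ we have $S^{-1}x^tS=x^{-1}$, and the displayed relation becomes $x^{-1}Mx=\mu_{[\alpha]}M$. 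Equivalently,
\[
x\,M=\mu_{[\alpha]}^{-1}M\,x .
\]
This is exactly the hypothesis of Lemma~\ref{lemma:centralizer_of_x(1)}.

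\textbf{Step 2: $M$ is $\sigma$-symmetric, hence of the form $aI+bQ$.} Clearly $M\equiv I\pmod J$. From $C=Q\overline{C}Q^{-1}$, together with $Q=Q^t=Q^{-1}$ and $QS=SQ$, we get $\overline{C}=QCQ$ and $\overline{C}^t=QC^tQ$. Substituting into $\sigma(M)=Q\,S^{-1}\overline{C}^tS\overline{C}\,Q^{-1}$ shows $\sigma(M)=M$. Lemma~\ref{lemma:centralizer_of_x(1)} then gives $M=aI+bQ$ with $a,b\in R_\theta$, $a\equiv1\pmod J$ and $b\in J$. In other words,
\[
C^tSC=S(aI+bQ).
\]

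\textbf{Step 3: choosing $d$.} For $C'=C(I+dQ)$, note that $I+dQ$ is symmetric and commutes with $S$, so
\[
C'^tSC'=S\,(I+dQ)^2(aI+bQ).
\]
Now $Q$ has eigenvalue $1$ with multiplicity $2\ell-1$ and eigenvalue $-1$ with multiplicity $1$ (it swaps $e_\ell$ and $e_{2\ell}$). On these eigenspaces the right-hand factor acts by $(1+d)^2(a+b)$ and $(1-d)^2(a-b)$ respectively. So $C'^tSC'$ is a scalar multiple of $S$ exactly when
\[
\Bigl(\tfrac{1+d}{1-d}\Bigr)^2=\tfrac{a-b}{a+b}.
\]
The main obstacle is that this seems to require a square root in $R$, which need not exist in a non-Henselian local ring. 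The determinant resolves it. Taking determinants in $C^tSC=S(aI+bQ)$ gives
\[
(\det C)^2=(a+b)^{2\ell-1}(a-b),
\]
so $\frac{a-b}{a+b}=u^2$ with $u\coloneqq \det C/(a+b)^{\ell}$, a unit with $u\equiv1\pmod J$. Set
\[
d\coloneqq\frac{u-1}{u+1}\in J
\]
(legitimate since $u+1\equiv2$ is a unit); then $\frac{1+d}{1-d}=u$, as required. Moreover $\det\sigma(C)=\overline{\det C}$ and $\sigma(C)=C$, so $\det C\in R_\theta$. Since also $a+b\in R_\theta$, we get $u\in R_\theta$ and hence $d\in J\cap R_\theta$. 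Then $C'\in\GL_{2\ell}(R,J)^\sigma$, since $I+dQ$ is $\sigma$-symmetric and congruent to $I$ modulo $J$.

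\textbf{Step 4: membership in $G_{\ad,\sigma}$.} We now have $C'^tSC'=\lambda S$ with $\lambda\equiv1\pmod J$, and $\sigma(C')=C'$. It remains to check $\det C'=\lambda^\ell$. Taking determinants gives $(\det C')^2=\lambda^{2\ell}$, so $\varepsilon\coloneqq\det C'/\lambda^\ell$ satisfies $\varepsilon^2=1$ and $\varepsilon\equiv1\pmod J$. Then $(\varepsilon-1)(\varepsilon+1)=0$ with $\varepsilon+1$ a unit (since $1/2\in R$), so $\varepsilon=1$. By the description of $\PSO_{2\ell}(R)^\sigma$, this gives $[C']\in G_{\ad,\sigma}(D_\ell,R)$.
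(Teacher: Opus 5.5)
Your proof is correct and is essentially the paper's argument. For $\sigma$-symmetric $C$, your matrix $S^{-1}C^tSC$ coincides with the paper's $M(C)=(P\overline{C^t}P^{-1})C$, and from there you follow the same steps: apply Lemma~\ref{lemma:centralizer_of_x(1)}, obtain the needed square root from the determinant, and check $\det C'=\lambda^\ell$. Your choice $d=(u-1)/(u+1)$ with $u=\det C/(a+b)^{\ell}$ is even equal to the paper's $d=-b/(a+s)$ with $s=\det C/(a+b)^{\ell-1}$, once you use $s^2=a^2-b^2$. The remaining differences are cosmetic: you derive the commutation relation from the similarity condition alone and verify $\sigma(M)=M$ separately, and you find $d$ through the $\pm1$-eigenspaces of $Q$ rather than by computing the $Q$-coefficient.
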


\begin{proof}
    Let $P \coloneqq SQ = QS$, where both $S$ and $Q$ are as defined in Section~\ref{sec:G(R)_and_MT}. 
    Then for every $[A] \in G_{\ad, \sigma} (D_\ell, R)$, we have
    \[
        A (P \overline{A}^{t} P^{-1}) = \lambda \, I_{2 \ell}.
    \]
    Indeed, since $P = SQ$, we obtain
    \[
        P \overline{A}^{t} P^{-1} = S Q \overline{A}^{t} Q^{-1} S^{-1} = \lambda_1 \, S A^{t} S^{-1} = \lambda_1 \lambda_2 A^{-1} = \lambda \, A^{-1},
    \]
    where $\lambda = \lambda_1 \lambda_2 \in R^\times$.
    
    For every $[\alpha] \in \Phi_\rho$, define 
    \[
        Y_{[\alpha]} \coloneqq C \, x_{[\alpha]}(1) \, C^{-1}.
    \]
    By hypothesis, the projective class $[Y_{[\alpha]}]$ belongs to $G_{\text{ad}, \sigma}(D_{\ell}, R)$; hence,
    \[
        Y_{[\alpha]} (P \overline{Y}_{[\alpha]}^{t} P^{-1}) = \lambda_{[\alpha]} I_{2 \ell}
    \]
    for some unit $\lambda_{[\alpha]} \in R^{\times}$.
    Substituting the definition of $Y_{[\alpha]}$ yields
    \[
        (C x_{[\alpha]}(1) C^{-1}) \left( P (\overline{C^{-t}} \, \overline{x_{[\alpha]}(1)^{t}} \, \overline{C^{t}} ) P^{-1} \right) = \lambda_{[\alpha]} I_{2 \ell}.
    \]
    Rearranging the terms, we get
    \[
        (C x_{[\alpha]}(1) C^{-1}) \left( (P \overline{C^{-t}} P^{-1}) (P \overline{x_{[\alpha]}(1)^{t}} P^{-1}) (P \overline{C^{t}} P^{-1}) \right) = \lambda_{[\alpha]} I_{2 \ell}.
    \]
    Since $x_{[\alpha]}(1) \in \SO_{2\ell}(R)^{\sigma}$, it follows that $P \overline{x_{[\alpha]}(1)^{t}} P^{-1} = x_{[\alpha]}(1)^{-1}$. 
    Thus, the equation simplifies to
    \[
        (C x_{[\alpha]}(1) C^{-1}) \left( (P \overline{C^{-t}} P^{-1}) \, x_{[\alpha]}(1)^{-1} \, (P \overline{C^{t}} P^{-1}) \right) = \lambda_{[\alpha]} I_{2 \ell}.
    \]
    Equivalently, this can be rewritten as
    \[
        \left( (P \overline{C^{t}} P^{-1}) C  \right) x_{[\alpha]}(1) = \lambda_{[\alpha]} \, x_{[\alpha]}(1) \left( (P \overline{C^{t}} P^{-1}) C  \right).
    \]
    Define $M(C) \coloneqq (P \overline{C^{t}} P^{-1}) C$. Then we have
    \[
        M(C) \, x_{[\alpha]}(1) = \lambda_{[\alpha]} \, x_{[\alpha]}(1) \, M(C)
    \]
    for all $[\alpha] \in \Phi_\rho$.
    By definition, $M(C) \in \GL_{2 \ell}(R, J)^{\sigma}$. 
    Therefore, by Lemma~\ref{lemma:centralizer_of_x(1)}, we conclude that $\lambda_{[\alpha]} = 1$ and 
    \[
        M(C) = a I_{2 \ell} + b Q
    \]
    for some $a,b \in R_\theta$ with $a \equiv 1 \pmod{J}$ and $b \in J$. 

    Taking the determinant of both sides, we obtain 
    \[
        \det(M(C)) = (a+b)^{2 (\ell-1)} (a^2 - b^2).
    \]
    On the other hand, since $C \in \GL_{2\ell}(R)^{\sigma}$, we have $\det(C) = \overline{\det(C)}$, which implies 
    \[
        \det(M(C)) = \det(P \overline{C^{t}} P^{-1}) \det(C) = \overline{\det(C)} \det(C) = (\det(C))^2.
    \]
    Therefore,
    \[
        (\det(C))^2 = (a+b)^{2 (\ell-1)} (a^2 - b^2).
    \]
    Since $a + b \equiv 1 \pmod{J}$, it is a unit in $R$, and hence
    \[
        s \coloneqq \frac{\det(C)}{(a+b)^{\ell-1}} \in R_\theta^{\times}
    \]
    is well-defined.
    Moreover, $s^2 = a^2 - b^2$, and since $a+s \equiv 2 \pmod{J}$, we have $a+s \in R^{\times}$.
    Define
    \[
        d \coloneqq -\frac{b}{a+s} \in J \cap R_\theta,
    \]
    and set
    \[
        C' \coloneqq C (I_{2 \ell} + d Q) \in \GL_{2\ell}(R, J)^{\sigma}.
    \]

    Since $P \overline{Q^{t}} P^{-1} = Q$, it follows that
    \[
        M(C') \coloneqq (P \overline{(C')^{t}} P^{-1}) C' = (I_{2 \ell} + d Q) \, M(C) \, (I_{2 \ell} + d Q).
    \]
    Substituting $M(C) = a I_{2 \ell} + b Q$, we obtain
    \[
        M(C') = \left( (1 + d^2)a + 2db \right) I_{2\ell} + \left( (1+d^2)b + 2da \right) Q.
    \]

    For our choice of $d$, we claim that $(1+d^2)b + 2da = 0$. 
    Indeed, after factoring out $b$ and multiplying by $(a+s)^2$, the expression becomes proportional to
    \[
        b \left( (a+s)^2 + b^2 - 2a(a+s) \right) = b \left( s^2 + b^2 - a^2 \right),
    \]
    which is $0$ because $s^2 = a^2 - b^2$. 
    Hence,
    \[
        M(C') = \lambda I_{2 \ell}
    \]
    where $\lambda \coloneqq (1 + d^2)a + 2db \in R_\theta^{\times}$.
    Thus, the relation
    \[
        (P \overline{(C')^{t}} P^{-1}) C' = \lambda I_{2 \ell} = C' (P \overline{(C')^{t}} P^{-1})
    \]
    along with $C'=Q\overline{C'}Q^{-1}$, implies
    \[
        (C')^t S C' = \lambda S.
    \]
    Taking determinants, we obtain $(\det C')^2=\lambda^{2\ell}$.
    
    Since $C'\equiv I_{2\ell}\pmod J$, the preceding similarity relation also gives $\lambda\equiv1\pmod J$. Hence the unit
    \[
        r:=\frac{\det C'}{\lambda^\ell}
    \]
    satisfies $r^2 = 1$, and $r\equiv1\pmod J$, so $r=1$, and consequently $\det C'=\lambda^\ell$.

    Thus $C'$ is a proper orthogonal similarity. Together with $C'=Q \overline{C'} Q^{-1}$, this proves that
    \[
        [C'] \in G_{\mathrm{ad},\sigma}(D_\ell,R),
    \]
    as desired.
\end{proof}

We now resume the proof of Theorem~\ref{MT_local1}.
Since $\varphi_1$ is an automorphism of $E'_{\text{ad}, \sigma}(\Phi, R)$, the element $\varphi_1(x_{[\alpha]}(1)) = [C_1] \, x_{[\alpha]}(1) \, [C_1]^{-1}$ resides in $E'_{\text{ad}, \sigma}(\Phi, R)$ for all roots $[\alpha] \in \Phi_\rho$. 
By Proposition~\ref{prop:normalization_of_C}, there exists an element $d \in J \cap R_\theta$ such that the matrix 
\[
    C_2 \coloneqq C_1 (I_{2\ell} + dQ)  \in \GL_{2\ell}(R, J)^{\sigma}
\]
satisfies $[C_2] \in G_{\text{ad}, \sigma}(D_{\ell}, R)$.
Furthermore, by Lemma~\ref{lemma:centralizer_of_x(1)}, the matrix $(I_{2\ell} + dQ)$ commutes with the root elements $x_{[\alpha]}(1)$ for all $[\alpha] \in \Phi_\rho$. 
Thus, if we define 
\[
    \varphi_5 \coloneqq i_{[C_2]}^{-1} \circ \varphi_1,
\]
then $\varphi_5$ is an automorphism of $E'_{\text{ad}, \sigma}(\Phi, R)$ satisfying $\varphi_5(x_{[\alpha]}(1)) = x_{[\alpha]}(1)$ for all $[\alpha] \in \Phi_\rho$. 

\medskip

\noindent \textbf{Step 4.} Show that there exists an automorphism $\mu$ of the ring $R$ such that $\mu \circ \theta = \theta \circ \mu$ and
\[
    \varphi_5 (x_{[\alpha]}(t)) = x_{[\alpha]}(\mu(t)) \qquad \text{for all } t \in R_{[\alpha]}.
\]

\medskip

Assuming the validity of Step~4, it follows that the map $\varphi_5$ is induced by an automorphism $\mu$ of ring $R$; in accordance with standard notation, we also denote $\varphi_5$ by $\mu$.

Therefore, we obtain
\[
    \varphi = i_{g_1} \circ \varphi_1 = i_{g_1} \circ i_{[C_2]} \circ \mu.
\]
Setting $C = g_1 [C_2]$, this simplifies to
\[
    \varphi = i_C \circ \mu,
\]
where $C \in G_{\ad, \sigma} (\Phi, R)$ and $\mu$ is a ring automorphism. 
This completes the proof of Theorem~\ref{MT_local1}.

\medskip

It remains to establish the validity of Steps~1--4, which will be addressed in Sections~\ref{sec:image_of_h(-1)}, \ref{sec:image_of_w(1)}, \ref{sec:image_of_x(1)}, and \ref{sec:image_of_x(t)}, respectively.

\subsection{A lifting lemma}

Before concluding this section, we introduce a lemma that will be used in Sections~\ref{sec:image_of_h(-1)}--\ref{sec:image_of_x(t)}. In each of these sections, the common first step is to lift our setup from $\PGL_{2\ell}(R)$ to $\GL_{2\ell}(R)$ in a compatible way, enabling us to perform the necessary matrix computations within $\GL_{2\ell}(R)$. The following lemma facilitates this process.

Consider the following commutative diagram:
\[
    \begin{tikzcd}[row sep=large, column sep=large]
        E'_{\pi_0,\sigma}(\Phi,R) \arrow[r, "\delta_R"] \arrow[d, "\lambda_{\pi_0}"'] 
        & E'_{\mathrm{ad},\sigma}(\Phi,R) \arrow[d, "\lambda_{\mathrm{ad}}"] \\
        E'_{\pi_0,\sigma}(\Phi,k) \arrow[r, "\delta_k"'] 
        & E'_{\mathrm{ad},\sigma}(\Phi,k),
    \end{tikzcd}
\]
where the horizontal maps $\delta_R$ and $\delta_k$ are the natural central quotient homomorphisms, and the vertical maps $\lambda_{\pi_0}$ and $\lambda_{\mathrm{ad}}$ are the reduction homomorphisms modulo the Jacobson radical $J$.

\begin{lemma}\label{lemma:compatible-representatives}
    Let $\varphi_1 \in \operatorname{Aut}(E'_{\mathrm{ad}, \sigma}(\Phi, R))$ be as above; that is, the induced automorphism $\overline{\varphi}_1$ of $E'_{\mathrm{ad}, \sigma}(\Phi, k)$ is obtained by applying a field automorphism $f \colon k \to k$ entrywise, where $f$ commutes with the automorphism of $k$ induced by $\theta$. 
    Denote by $f_{\mathrm{ad}}$ and $f_{\pi_0}$ the automorphisms of $E'_{\mathrm{ad}, \sigma}(\Phi, k)$ and $E'_{\pi_0, \sigma}(\Phi, k)$, respectively, induced by the field automorphism $f$.
    In particular, $f_{\mathrm{ad}} = \overline{\varphi}_1$.

    For every $x \in E'_{\pi_0, \sigma}(\Phi, R)$, there exists an element $y \in E'_{\pi_0, \sigma}(\Phi, R)$ such that 
    \[
        \delta_R(y) = \varphi_1\bigl(\delta_R(x)\bigr)
        \qquad \text{and} \qquad
        \lambda_{\pi_0}(y) = f_{\pi_0} \bigl(\lambda_{\pi_0}(x)\bigr).
    \]
\end{lemma}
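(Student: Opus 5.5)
Since $\delta_R$ is surjective, we first pick any $y_0 \in E'_{\pi_0,\sigma}(\Phi,R)$ with $\delta_R(y_0)=\varphi_1(\delta_R(x))$. We then correct $y_0$ by a central element so that its reduction matches $f_{\pi_0}(\lambda_{\pi_0}(x))$. By commutativity of the diagram and $f_{\mathrm{ad}}=\overline{\varphi}_1$, we have
\[
    \delta_k\bigl(\lambda_{\pi_0}(y_0)\bigr) = \lambda_{\mathrm{ad}}\bigl(\delta_R(y_0)\bigr) = \lambda_{\mathrm{ad}}\bigl(\varphi_1(\delta_R(x))\bigr) = \overline{\varphi}_1\bigl(\lambda_{\mathrm{ad}}(\delta_R(x))\bigr) = f_{\mathrm{ad}}\bigl(\delta_k(\lambda_{\pi_0}(x))\bigr).
\]
Since $f$ acts entrywise and is compatible with passing to the central quotient, $f_{\mathrm{ad}}\circ\delta_k=\delta_k\circ f_{\pi_0}$. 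Hence $\lambda_{\pi_0}(y_0)$ and $f_{\pi_0}(\lambda_{\pi_0}(x))$ have the same image under $\delta_k$, so they differ by an element $z$ of $\ker\delta_k = Z(E'_{\pi_0,\sigma}(\Phi,k))$. This kernel is the center by \cite[Lemma~2.7]{EB&DM1}.

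Next we identify this center concretely. Inside $\SO_{2\ell}(k)^\sigma$, central elements of the elementary subgroup are scalar matrices $\varepsilon I_{2\ell}$ with $\varepsilon^2=1$. Indeed, they must commute with all $x_{[\alpha]}(1)$, and a computation in the style of Lemma~\ref{lemma:centralizer_of_x(1)} over the field $k$ forces the form $aI+bQ$. The conditions of lying in $\SO_{2\ell}$ and of being generated appropriately then force $b=0$ and $a=\pm1$. Thus $z=\pm I_{2\ell}$. The element $-I_{2\ell}$ lifts to $-I_{2\ell}$ over $R$.

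The remaining point, which I expect to be the main obstacle, is to check that $-I_{2\ell}\in E'_{\pi_0,\sigma}(\Phi,R)$ whenever $-I_{2\ell}\in E'_{\pi_0,\sigma}(\Phi,k)$. Without this, $y=-y_0$ might fail to lie in the elementary group. To handle it, I would express $-I_{2\ell}$ as a product of the torus elements $h_{[\alpha_i]}(-1)$. These are defined over $\mathbb{Z}[1/2]$-type integers and restrict blockwise to $-I_4$ on $B_i\oplus B_{i+1}$, so suitable products give $-I_{2\ell}$ on all blocks when this is possible over $k$. Whether it is possible depends only on the parity of $\ell$ and not on the field, since the same integral expression works over $R$ and over $k$. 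In the worst case, one can instead argue that $\pm I$ lie in $E'$ over $R$ exactly when they do over $k$, because both are images of the same word in the $h_{[\alpha]}(-1)$. Once $z$ lifts to a central element $\tilde z\in E'_{\pi_0,\sigma}(\Phi,R)$ with $\lambda_{\pi_0}(\tilde z)=z^{-1}$, we set $y=\tilde z y_0$. Then $\delta_R(y)=\delta_R(y_0)$ because $\tilde z$ is central and scalar, and $\lambda_{\pi_0}(y)=f_{\pi_0}(\lambda_{\pi_0}(x))$, which completes the proof.
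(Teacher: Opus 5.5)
Your reduction is fine: any $y_0$ with $\delta_R(y_0)=\varphi_1(\delta_R(x))$ satisfies $\lambda_{\pi_0}(y_0)=z\,f_{\pi_0}(\lambda_{\pi_0}(x))$ with $z\in\ker\delta_k\subseteq\{\pm I_{2\ell}\}$. The containment holds simply because $\ker\delta_k$ consists of scalar matrices in $\SO_{2\ell}(k)$, so Lemma~\ref{lemma:centralizer_of_x(1)} is not needed. The gap is the lifting step, and it cannot be repaired the way you suggest, for three reasons.
\begin{enumerate}[(i)]
    \item Every $h_{[\alpha]}(-1)$ fixes $B_\ell$ pointwise. For long roots it is $-I$ on $B_i\oplus B_j$ with $i,j\le\ell-1$, and $h_{[\nu_i]}(-1)=I$. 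So no product of them equals $-I_{2\ell}$.
    \item Whether $-I_{2\ell}\in E'_{\pi_0,\sigma}(\Phi,k)$ genuinely depends on the field. If $\theta$ is nontrivial on $k=k_\theta(\sqrt d)$, then $\SO_{2\ell}(k)^\sigma$ is the special orthogonal group over $k_\theta$ of $\ell-1$ hyperbolic planes plus the norm form of $k/k_\theta$. The elementary group lies in the kernel of the spinor norm, since $x_{[\alpha]}(t)=x_{[\alpha]}(t/2)^2$. The spinor norm of $-I$ is $(-1)^\ell d$.
    \item The implication ``$-I\in E'(k)\Rightarrow -I\in E'(R)$'' is false for non-Henselian $R$. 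Take $R=\mathbb{Z}_{(3)}[\sqrt{-7}]$, which is local with residue field $\mathbb{F}_9$, and $\ell$ odd. Over $k$ the spinor norm of $-I$ is $7\equiv1$, a square in $\mathbb{F}_3$; indeed $-I$ is the nontrivial central element of $\Omega^-_{2\ell}(3)$. But $7$ is not a square in $\mathbb{Q}$, so $-I\notin E'_{\pi_0,\sigma}(\Phi,\mathbb{Q}(\sqrt{-7}))\supseteq E'_{\pi_0,\sigma}(\Phi,R)$. Here $\ker\delta_R$ is trivial, so $y_0$ is forced, and nothing in your argument shows $z=I$.
\end{enumerate}

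The missing idea is global rather than pointwise. Let $c(x)$ be the class of $\lambda_{\pi_0}(y_0)\,f_{\pi_0}(\lambda_{\pi_0}(x))^{-1}$ in $\ker\delta_k/\lambda_{\pi_0}(\ker\delta_R)$. It does not depend on the choice of $y_0$, since two choices differ by an element of $\ker\delta_R$. Because $\ker\delta_k$ is central and $y_0y_0'$ is an admissible choice for $xx'$, $c$ is a homomorphism from $E'_{\pi_0,\sigma}(\Phi,R)$ to an abelian group.

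But $E'_{\pi_0,\sigma}(\Phi,R)$ is perfect. Since $\ell-1\ge3$, every long root of $\Phi_\rho$ is a sum of two long roots spanning an $A_2$, so long root elements are commutators. For short roots, use the $B_2$ relation $[x_{[\alpha_{\ell-2}]}(1),x_{[\alpha_{\ell-1}]}(t)]=x_{[\alpha_{\ell-2}]+[\alpha_{\ell-1}]}(t)\,x_{[\alpha_{\ell-2}]+2[\alpha_{\ell-1}]}(t\bar t)$ together with Weyl conjugation. Hence $c\equiv1$, that is, $z\in\lambda_{\pi_0}(\ker\delta_R)$. Correcting $y_0$ by the corresponding element of $\ker\delta_R$ gives the required $y$, and no central element of $E'_{\pi_0,\sigma}(\Phi,k)$ ever needs to be lifted.
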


\begin{proof}
    The proof follows the same argument as in \cite[Lemma~4.2]{EB&DM1}. 
    The only modification is that the subscript $\mathrm{sc}$ is replaced by $\pi_0$ throughout the proof. 
    Since the argument remains unchanged otherwise, we omit the details.
\end{proof}

\begin{rmk}
    While Lemma~\ref{lemma:compatible-representatives} will be used directly in Section~\ref{sec:image_of_h(-1)}, for the remaining sections (Sections~\ref{sec:image_of_w(1)} through \ref{sec:image_of_x(t)}), we will rely on the following immediate consequence of this lemma.
    
    Let
    \[
        \lambda_J \colon \GL_{2\ell}(R) \to \GL_{2\ell}(k)
    \]
    denote the entrywise reduction modulo $J$. Let $C \in \GL_{2\ell}(R, J)$ be an element of the principal congruence subgroup modulo $J$, and define 
    \[
        \psi \coloneqq i_{[C]}^{-1} \circ \varphi_1.
    \]
    Observe that $\psi$ may not be an automorphism of $E'_{\mathrm{ad}, \sigma}(\Phi, R)$; rather, it is an isomorphism from $E'_{\mathrm{ad}, \sigma}(\Phi, R)$ onto a subgroup of $\PGL_{2\ell}(R)$.
    Nevertheless, the induced map $\overline{\psi}$ is an automorphism of $E'_{\mathrm{ad}, \sigma}(\Phi, k)$ satisfying $\overline{\psi} = \overline{\varphi}_1$.

    If $y$ is chosen as in Lemma~\ref{lemma:compatible-representatives}, then the element
    \[
        y':= C^{-1} \, y \, C \in \SL_{2\ell}(R)
    \]
    serves as a representative for $\psi\bigl(\delta_R(x)\bigr)$. Furthermore, because $\lambda_J(C) = I_{2\ell}$, we obtain
    \[
        \lambda_J (y') = f_{\pi_0}\bigl(\lambda_{\pi_0}(x)\bigr).
    \]
\end{rmk}


\section{The Images of \texorpdfstring{$h_{[\alpha]}(-1)$}{h(-1)}}\label{sec:image_of_h(-1)}


Fix a simple system $\Delta = \{\alpha_1, \dots, \alpha_\ell\}$ of type $D_\ell$ for $\ell \geq 4$, as specified in Section~\ref{sec:G(R)_and_MT}. 
Let $\Delta_\rho = \{[\alpha_1], \dots, [\alpha_{\ell-1}]\}$ be the corresponding simple system of the associated twisted root system. 
For each simple root \([\alpha_i] \in \Delta_\rho\), we consider the element
\[
    H_i \coloneqq \varphi_1\bigl(h_{[\alpha_i]}(-1)\bigr)
    \in E'_{\text{ad},\sigma}(\Phi,R) \subset \PGL_{2\ell}(R).
\]

We first claim that there exists a representative $h_i \in \GL_{2\ell}(R)$ of $H_i$ such that 
\[
    h_i \equiv h_{[\alpha_i]}(-1) \pmod{J}, \qquad h_i^2 = I_{2\ell}, \qquad \text{and} \qquad h_i h_j = h_j h_i.
\]
As noted in Section~\ref{sec:G(R)_and_MT}, the symbol $h_{[\alpha_i]}(-1) \in E'_{\pi_0, \sigma}(\Phi, R) \subset \SO_{2\ell}(R)$ denotes the pre-image of the corresponding element in $E'_{\text{ad}, \sigma}(\Phi, R)$. 
Although we utilize the same notation for both the projective element and its matrix representative, the distinction will be clear from the context.


\subsection{Choice of representative for \texorpdfstring{$H_i$}{H-i}}

Our goal in this subsection is to choose representatives $h_i$ in $GL_{2\ell} (R)$ of the elements $H_i$ satisfying the relations described above. 

\medskip 

Since $H_i \in E'_{\ad,\sigma}(\Phi,R)$ for $ (1 \leq i \leq \ell-1)$, by Lemma~\ref{lemma:compatible-representatives}, we may choose representatives 
\[
    h_i \in E'_{\pi_0, \sigma}(\Phi, R) \subset \SL_{2 \ell}(R)
\]
of the projective classes $H_i$, such that
\[
    h_i \equiv h_{[\alpha_i]}(-1)\pmod J.
\]
We now normalize these representatives so that
\[
    h_i^2=I \quad (1 \leq i \leq \ell-1), \qquad h_i h_j = h_j h_i \quad (1 \leq i,j \leq \ell-1).
\]

Since $H_i^2=1$ in $\PGL_{2\ell}(R)$, we have
\[
    h_i^2 = \lambda_i I
\]
for some $\lambda_i \in R^\times$. Taking determinants yields
\[
    \lambda_i^{2\ell}=1.
\]
On the other hand, from $h_i \equiv h_{[\alpha_i]}(-1) \pmod{J}$ we obtain
\[
    \lambda_i\equiv 1\pmod J.
\]

Write $2 \ell = 2^s q$, where $q$ is odd. 
Then
\[
    (\lambda_i^q)^{2^s}=1.
\]
As $R$ is local and $2\in R^\times$, the only $2^s$-th root of unity congruent to $1$ modulo $J$ is $1$. Hence
\[
    \lambda_i^q=1,
\]
so $\lambda_i$ has odd order. Therefore the squaring map is an automorphism of the cyclic group $\langle\lambda_i\rangle$, and there exists
\[
    \eta_i\in\langle\lambda_i\rangle
\]
such that
\[
    \eta_i^2=\lambda_i^{-1}.
\]
Replacing $h_i$ by $\eta_i h_i$, we obtain
\[
    h_i^2=I.
\]
Since $\eta_i\in\langle\lambda_i\rangle$, the new representative still satisfies 
\[
    \det(h_i) = 1 
    \qquad \text{and} \qquad 
    h_i \equiv h_{[\alpha_i]}(-1)\pmod J.
\] 

\smallskip

Next, since the elements $H_i$ and $H_j$ commute in $\PGL_{2\ell}(R)$, their representatives satisfy
\[
    h_i h_j = \mu_{ij} \, h_j h_i
\]
for some scalar $\mu_{ij} \in R^\times$ and all $1 \leq i, j \leq \ell-1$. Because these matrices are congruent modulo $J$ to commuting diagonal matrices, it follows that 
\[
    \mu_{ij} \equiv 1 \pmod{J}.
\]
Furthermore, the previously established relations $h_i^2 = h_j^2 = I$ imply that $\mu_{ij}^2 = 1$. 
Since $R$ is a local ring with $2 \in R^\times$, we must have $\mu_{ij} = 1$. 
Consequently, the representatives commute:
\[
    h_i h_j = h_j h_i \qquad (1 \leq i, j \leq \ell-1).
\]


\subsection{Normalization of \texorpdfstring{$h_i$}{h-i}}

Consider the basis $\{e_1, \dots, e_{2\ell}\}$ of $R^{2\ell}$. 
With respect to this basis, the matrices $h_{[\alpha_i]}(-1) \in \GL_{2\ell}(R)$, for $[\alpha_i] \in \Delta_\rho$, have the form
\[
    h_{[\alpha_i]}(-1) = \operatorname{diag}[\pm 1, \dots, \pm 1].
\]
In other words, for each $i = 1, \dots, \ell-1$ and $j \in \{1, \dots, 2 \ell\}$, we have
\[
    h_{[\alpha_i]}(-1)(e_j) = \varepsilon_{i j} e_j,
    \qquad \varepsilon_{i j} \in \{1, -1\}.
\]

For all $j \in \{1, \dots, 2 \ell\}$, set $e_j^{(0)} := e_j$.
For $k \in \{1, \dots, \ell-1\}$, define inductively
\[
    e_j^{(k)} := \frac{e_j^{(k-1)} + \varepsilon_{k,j}\, h_k \bigl(e_j^{(k-1)}\bigr)}{2}.
\]
Since $h_k \equiv h_{[\alpha_k]}(-1) \pmod{J}$, we have
\[
    e_j^{(k)} \equiv e_j^{(k-1)} \pmod J.
\]
Thus at every step $\{e_1^{(k)},\ldots,e_{2\ell}^{(k)}\}$ is a basis of \(R^{2\ell}\), and the corresponding change-of-basis matrix lies in \(\GL_{2\ell}(R,J)\).

We claim that for each \(k=1,\ldots,\ell-1\), with respect to the basis $\{e_1^{(k)},\ldots,e_{2\ell}^{(k)} \}$, all elements \(h_i\), \(1\le i\le k\), act on \(e_j^{(k)}\) by multiplication by \(\varepsilon_{ij}\).
Equivalently, their action on this basis coincides with the action of \(h_{[\alpha_i]}(-1)\), \(1\le i\le k\), on the original basis \(\{e_1, \ldots, e_{2\ell}\}\).

We prove this claim by induction on \(k\). First, the involution \(h_k\) acts on \(e_j^{(k)}\) by multiplication by \(\varepsilon_{kj}\). Indeed,
\[
    \begin{aligned}
        h_k(e_j^{(k)})
        &=
        \frac{h_k(e_j^{(k-1)})+\varepsilon_{kj}h_k^2(e_j^{(k-1)})}{2}  \\
        &=
        \frac{h_k(e_j^{(k-1)})+\varepsilon_{kj}e_j^{(k-1)}}{2}          \\
        &=
        \varepsilon_{kj}
        \frac{e_j^{(k-1)}+\varepsilon_{kj}h_k(e_j^{(k-1)})}{2}          \\
        &=
        \varepsilon_{kj}e_j^{(k)}.
    \end{aligned}
\]

Now let \(i<k\), and assume by induction that
\[
    h_i(e_j^{(k-1)})=\varepsilon_{ij}e_j^{(k-1)}.
\]
Since the representatives \(h_i\) commute with each other, we obtain
\[
\begin{aligned}
    h_i(e_j^{(k)})
    &=
    \frac{h_i(e_j^{(k-1)})+\varepsilon_{kj}h_i h_k(e_j^{(k-1)})}{2}  \\
    &=
    \frac{\varepsilon_{ij}e_j^{(k-1)}
    +\varepsilon_{kj}h_k h_i(e_j^{(k-1)})}{2}                       \\
    &=
    \varepsilon_{ij}
    \frac{e_j^{(k-1)}+\varepsilon_{kj}h_k(e_j^{(k-1)})}{2}           \\
    &=
    \varepsilon_{ij}e_j^{(k)}.
\end{aligned}
\]
This completes the induction.

Thus, for each \(1\le i\le \ell-1\), the action of \(h_i\) on the basis $\{e_1^{(\ell-1)}, \ldots, e_{2\ell}^{(\ell-1)}\}$ coincides with the action of \(h_{[\alpha_i]}(-1)\) on the original basis
\(\{e_1,\ldots,e_{2\ell}\}\).

\smallskip

Let \(g_2 \in \GL_{2\ell}(R)\) be the matrix corresponding to this change of basis. By construction, we have $g_2 \in \GL_{2\ell}(R,J)$.
Set
\[
    \varphi_2 := i_{[g_2]}^{-1} \circ \varphi_1.
\]
Then $\varphi_2$ is an isomorphism of $E'_{\ad, \sigma} (\Phi, R)$ onto a subgroup of $\PGL_{2\ell} (R)$ and satisfies
\[
    \varphi_2 \bigl(h_{[\alpha_i]}(-1)\bigr) = h_{[\alpha_i]}(-1)
    \qquad \text{for all } i \in \{ 1, \dots, \ell-1 \} .
\]


\section{The Images of \texorpdfstring{$w_{[\alpha]}(1)$}{w(1)}}\label{sec:image_of_w(1)}


For each root $[\alpha] \in \Phi_\rho$, define
\[
    W_{[\alpha]} \coloneqq \varphi_2\bigl(w_{[\alpha]}(1)\bigr) \in \PGL_{2\ell}(R).
\]
We claim that each $W_{[\alpha]}$ admits a representative $w_{[\alpha]} \in \SL_{2\ell}(R)$ satisfying
\[
    w_{[\alpha]} \equiv w_{[\alpha]}(1) \pmod{J}
    \qquad \text{and} \qquad
    w_{[\alpha]}^2 = h_{[\alpha]}(-1).
\]
We establish this claim in the subsequent subsection. 

Before proceeding, we recall a subspace decomposition that will be used extensively in this section.
Let $\{ e_1, \dots, e_{2\ell} \}$ denote the standard basis of $R^{2\ell}$. As in Section~\ref{sec:G(R)_and_MT}, we define the two-dimensional subspaces
\[
    B_r \coloneqq \langle e_r, e_{\ell + r} \rangle \qquad (1 \leq r \leq \ell),
\]
which yields the direct sum decomposition
\[
    R^{2\ell} = B_1 \oplus \cdots \oplus B_{\ell}.
\]


\subsection{Choice of representatives for \texorpdfstring{$W_{[\alpha]}$}{W}}

Since $\varphi_2 = i_{[g_2]}^{-1} \circ \varphi_1$ and $g_2 \in \GL_{2\ell}(R, J)$, applying the remark following Lemma~\ref{lemma:compatible-representatives} to the standard representative $w_{[\alpha]}(1)$ in $\SO_{2\ell}(R)$, yields a representative $w_{[\alpha]} \in \SL_{2\ell}(R)$ of $W_{[\alpha]}$ such that
\[
    w_{[\alpha]} = w_{[\alpha]}(1) \pmod{J}.
\]

Since, in $\PGL_{2\ell} (R)$, we have
\[
    W_{[\alpha]}^2 = h_{[\alpha]}(-1),
\]
it follows that
\[
    w_{[\alpha]}^2 = \lambda_{[\alpha]} \, h_{[\alpha]}(-1),
\]
for some $\lambda_{[\alpha]} \in R^\times$. 

Taking determinants yields
\[
    \lambda_{[\alpha]}^{2\ell} = 1,
\]
and the congruence modulo $J$ implies
\[
    \lambda_{[\alpha]} \equiv1 \pmod{J}.
\]

As in Section~\ref{sec:image_of_h(-1)}, the $2$-primary part of the order of $\lambda_{[\alpha]}$ is trivial, since $2 \in R^\times$ and $\lambda_{[\alpha]} \equiv 1 \pmod{J}$. 
Hence the squaring map is an automorphism of the cyclic group generated by $\lambda_{[\alpha]}$. 
Therefore there exists $\eta_{[\alpha]} \in \langle \lambda_{[\alpha]} \rangle$ such that
\[
    \eta_{[\alpha]}^2 = \lambda_{[\alpha]}^{-1}.
\]
Replacing $w_{[\alpha]}$ by $\eta_{[\alpha]} w_{[\alpha]}$, we obtain a representative satisfying
\[
    w_{[\alpha]}^2 = h_{[\alpha]}(-1),
\]
as required.

Since
$\eta_{[\alpha]}\in\langle\lambda_{[\alpha]}\rangle$
and $\lambda_{[\alpha]}^{2\ell}=1$, we also have
$\eta_{[\alpha]}^{2\ell}=1$. Hence the rescaled representative
still belongs to $\operatorname{SL}_{2\ell}(R)$.


\subsection{Normalization of \texorpdfstring{$w_{[\alpha]}$}{W} for \texorpdfstring{$[\alpha] \sim A_1^2$}{A12}}

Recall that the roots $\{[\beta_1], \dots, [\beta_{\ell-1}]\}$ are precisely the positive roots of type $A_1^2$ in $\Phi_\rho$. 
Our goal in this subsection is to normalize the corresponding Weyl elements $w_{[\beta_i]}$. We begin by determining their initial form.

Fix $i \in \{1, \dots, \ell-1\}$. Since $w_{[\beta_i]}(1)$ commutes with $h_{[\alpha_j]}(-1)$ in $\PGL_{2\ell}(R)$ for all $j \in \{ 1, \dots, \ell-2 \}$, the corresponding representatives in $\GL_{2\ell}(R)$ satisfy
\[
    w_{[\beta_i]} \, h_{[\alpha_j]}(-1)
    =
    \lambda_{ij} \, h_{[\alpha_j]}(-1)\, w_{[\beta_i]}
\]
for some $\lambda_{ij} \in R^\ast$. 
Reducing modulo $J$ immediately yields $\lambda_{ij} \equiv 1 \pmod{J}$.

We claim that $\lambda_{ij}=1$. Indeed, rewriting the above relation as
\[
    w_{[\beta_i]}
    =
    \lambda_{ij} \, h_{[\alpha_j]}(-1) \, w_{[\beta_i]} \, h_{[\alpha_j]}(-1)^{-1}
\]
and squaring both sides yields
\[
    w_{[\beta_i]}^2
    =
    \lambda_{ij}^2 \,
    h_{[\alpha_j]}(-1) \, w_{[\beta_i]}^2 \, h_{[\alpha_j]}(-1)^{-1}.
\]
Since our representatives have been chosen so that
\[
    w_{[\beta_i]}^2 = h_{[\beta_i]}(-1) = I_{2\ell},
\]
it follows that
\[
    \lambda_{ij}^2=1.
\]

Since $R$ is a local ring with $2\in R^\ast$ and $\lambda_{ij} \equiv 1\pmod J$, the equality $\lambda_{ij}^2=1$ implies that $\lambda_{ij}=1$. Consequently,
\[
    h_{[\alpha_j]}(-1) \, w_{[\beta_i]}
    =
    w_{[\beta_i]} \, h_{[\alpha_j]}(-1)
\]
for every $i \in \{ 1, \dots, \ell-1\}$ and $j \in \{ 1, \dots, \ell-2 \}$.

A straightforward computation using this commutation relation shows that $w_{[\beta_i]}$ preserves the subspace $B_k$ for all $k = 1, \dots, \ell$. 
Since $w_{[\beta_i]} \equiv w_{[\beta_i]}(1) \pmod{J}$ and $w^2_{[\beta_i]} = I_{2 \ell}$, we conclude that $w_{[\beta_i]}|_{B_k} = I_2$ for all $k = 1, \dots, i-1, i+1, \dots, \ell-1$ and 
\[
    w_{[\beta_i]}|_{B_i} = \begin{pmatrix}
        a_i & b_i \\
        (1-a_i^2)/b_i & -a_i
    \end{pmatrix},
    \qquad
    w_{[\beta_i]}|_{B_{\ell}} = \begin{pmatrix}
        a_{\ell}^{(i)} & b_{\ell}^{(i)} \\
        (1-(a_{\ell}^{(i)})^2)/b_{\ell}^{(i)} & -a_{\ell}^{(i)}
    \end{pmatrix}
\]
where $a_i, a_{\ell}^{(i)} \in J$ and $b_i, b_{\ell}^{(i)} \equiv -1 \pmod{J}$. 

Since $w_{[\beta_i]}(1) \, w_{[\beta_j]}(1) = w_{[\beta_j]}(1) w_{[\beta_i]}(1)$ for all $i,j \in \{ 1, \dots, \ell-1 \}$, the same
scalar-normalization argument as before shows that the chosen
representatives of their images under $\varphi_2$ also commute; that is,
\[
    w_{[\beta_i]} \, w_{[\beta_j]} = w_{[\beta_j]} \, w_{[\beta_i]}.
\]
Thus, 
\[
    w_{[\beta_i]}|_{B_{\ell}} = w_{[\beta_j]}|_{B_{\ell}} =: \begin{pmatrix}
        a_{\ell} & b_{\ell} \\
        (1-a_{\ell}^2)/b_{\ell} & -a_{\ell}
    \end{pmatrix}.
\]

Set $T_j := \begin{pmatrix}
    -b_j & 0 \\
    a_j & 1
\end{pmatrix}$ for $j = 1, \dots, \ell$ and define $T' := \diag (T_1, \dots, T_{\ell})$. 
Then, 
\[
    T' \in \GL_{2\ell}(R, J)
\]
and 
\[
    (T')^{-1} \, h_{[\alpha]}(-1) \, T' = h_{[\alpha]}(-1)
\]
for all $[\alpha] \in \Phi_\rho$ as well as 
\[
    (T')^{-1} \, w_{[\beta_i]} \, T' = w_{[\beta_i]}(1)
\]
for all $i = 1, \dots, \ell-1$, as desired.


\subsection{Normalization of \texorpdfstring{$w_{[\alpha]}$}{W} for \texorpdfstring{$[\alpha] \sim A_1$}{A1}}

By the preceding arguments, we may assume without loss of generality that
\[
    \varphi_2 \bigl(h_{[\alpha_i]}(-1)\bigr) = h_{[\alpha_i]}(-1)
    \quad \text{and} \quad
    \varphi_2 \bigl(w_{[\beta_i]}(1)\bigr) = w_{[\beta_i]}(1)
\]
for all $1 \leq i \leq \ell-1$. 

To normalize the elements $w_{[\alpha]}$ for all roots $[\alpha] \sim A_1$, it suffices to normalize the representatives $w_{[\alpha_i]}$ corresponding to the simple roots $[\alpha_i]$ for $1 \leq i \leq \ell-2$. This reduction holds because of the fact that the simple Weyl elements $w_{[\alpha_i]}(1)$ for $1 \leq i \leq \ell-2$, together with already normalized Weyl element $w_{[\alpha_{\ell-1}]}(1)$, generate the elements $w_{[\alpha]}(1)$ for all remaining roots. 

Fix an index $1 \leq i \leq \ell-2$. Because $w_{[\alpha_i]}(1)$ commutes with $h_{[\alpha_i]}(-1)$ in $\PGL_{2\ell}(R)$, the standard scalar-normalization argument ensures that our chosen representative $w_{[\alpha_i]}$ also commutes with $h_{[\alpha_i]}(-1)$ in $\GL_{2\ell}(R)$. Consequently, $w_{[\alpha_i]}$ preserves both the subspace $B_i \oplus B_{i+1}$ and its complement
\[
    B' \coloneqq B_1 \oplus \cdots \oplus B_{i-1} \oplus B_{i+2} \oplus \cdots \oplus B_\ell.
\]
Moreover, since $w_{[\alpha_i]}^2 = h_{[\alpha_i]}(-1)$, and $h_{[\alpha_i]}(-1)$ acts trivially on $B'$, the restriction $w_{[\alpha_i]}|_{B'}$ is an involution. Because $w_{[\alpha_i]} \equiv I_{2\ell-4} \pmod{J}$ on $B'$, it follows that $w_{[\alpha_i]}|_{B'} = I_{2\ell-4}$.

Thus, $w_{[\alpha_i]}$ acts non-trivially only on the subspace $B_i \oplus B_{i+1}$. We claim that on this subspace, it assumes the block form
\[
    \begin{pmatrix}
        0 & P_i \\
        -P_i^{-1} & 0
    \end{pmatrix},
\]
where $P_i \in \GL_2(R)$ satisfies 
\[
    P_i \equiv I_2 \pmod{J}
    \qquad \text{and} \qquad
    P_i J_2 = J_2 P_i,
    \quad \text{with } 
    J_2 \coloneqq \begin{pmatrix} 
        0 & 1 \\ 
        1 & 0 
    \end{pmatrix}.
\]

To establish this claim, consider the relations
\[
    w_{[\alpha_i]}(1) \, w_{[\beta_i]} (1) \, w_{[\alpha_i]}(1)^{-1} = w_{[\beta_{i+1}]}(1)
\]
and
\[
    h_{[\alpha_j]}(-1) \, w_{[\alpha_i]}(1) \, h_{[\alpha_j]}(-1)^{-1} = w_{[\alpha_i]}(1)^{-1},
\]
where $j = i-1$ if $i \geq 2$, and $j = 2$ if $i = 1$. 
By the standard scalar-normalization argument, these relations lift to our chosen representatives in $\GL_{2\ell}(R)$. 

Using the lifted relation $w_{[\alpha_i]} \, h_{[\alpha_j]}(-1) \, w_{[\alpha_i]} = h_{[\alpha_j]}(-1)$ alongside $w_{[\alpha_i]}^2 = h_{[\alpha_i]}(-1)$, we deduce that the restriction of $w_{[\alpha_i]}$ to $B_i \oplus B_{i+1}$ must have the block form
\[
    w_{[\alpha_i]} =
    \begin{pmatrix}
        0 & P_i \\
        -P_i^{-1} & 0
    \end{pmatrix}
\]
for some $P_i \in \GL_2(R)$. Because $w_{[\alpha_i]} \equiv w_{[\alpha_i]}(1) \pmod{J}$, it immediately follows that $P_i \equiv I_2 \pmod{J}$. Furthermore, the lifted relation $w_{[\alpha_i]} \, w_{[\beta_i]}(1) = w_{[\beta_{i+1}]}(1) \, w_{[\alpha_i]}$ forces $P_i J_2 = J_2 P_i$, completing the proof of the claim.

\medskip

We now construct a change of basis to normalize the representatives $w_{[\alpha_i]}$ for all $1 \leq i \leq \ell-2$ simultaneously. 
For each $1 \leq r \leq \ell$, we inductively define an invertible matrix $T_r \in \GL_2(R)$ by setting 
\[
    T_1 := I_2,
    \qquad
    T_r := P_{r-1}^{-1} T_{r-1} 
    \quad (2 \le r \le \ell-1),
    \qquad
    T_{\ell} := I_2.
\]
With respect to the decomposition $R^{2\ell} = B_1 \oplus \dots \oplus B_{\ell}$, we define the block-diagonal matrix
\[
    T'' \coloneqq \operatorname{diag}(T_1, T_2, \dots, T_\ell).
\]
Since each $P_k \equiv I_2 \pmod{J}$, an induction shows $T_r \equiv I_2 \pmod{J}$ for all $r$, whence 
\[
    T'' \in \GL_{2\ell}(R,J).
\]

We assert that $T''$ provides the required change of basis. Specifically, we claim that
\[
    (T'')^{-1} \, w_{[\beta_i]}(1) \, (T'') = w_{[\beta_i]}(1) \qquad (1 \leq i \leq \ell-1),
\]
and
\[
    (T'')^{-1} \, w_{[\alpha_i]} \, (T'') = w_{[\alpha_i]}(1) \qquad (1 \leq i \leq \ell-2).
\]

The first identity holds because each $T_r$ commutes with $J_2$ (as it is formed from the $P_k$ matrices which commute with $J_2$). Therefore, conjugation by $T''$ preserves the block-permutation structure of the already normalized elements $w_{[\beta_i]}(1)$.

For the second identity, consider the restriction to the subspace $B_i \oplus B_{i+1}$. The corresponding block of $(T'')^{-1} \, w_{[\alpha_i]} \, (T'')$ is given by
\[
    \begin{pmatrix}
        T_i^{-1} & 0 \\
        0 & T_{i+1}^{-1}
    \end{pmatrix}
    \begin{pmatrix}
        0 & P_i \\
        -P_i^{-1} & 0
    \end{pmatrix}
    \begin{pmatrix}
        T_i & 0 \\
        0 & T_{i+1}
    \end{pmatrix}
    =
    \begin{pmatrix}
        0 & T_i^{-1} P_i T_{i+1} \\
        -\bigl(T_i^{-1} P_i T_{i+1}\bigr)^{-1} & 0
    \end{pmatrix}.
\]
By construction, $T_{i+1} = P_i^{-1} T_i$, which implies that
\[
    T_i^{-1} P_i T_{i+1} = T_i^{-1} P_i \bigl(P_i^{-1} T_i\bigr) = I_2.
\]
Thus, on $B_i \oplus B_{i+1}$, we have
\[
    (T'')^{-1} \, w_{[\alpha_i]} \, (T'')
    = \begin{pmatrix}
        0 & I_2 \\
        -I_2 & 0
    \end{pmatrix}.
\]
Since $w_{[\alpha_i]}$ acts trivially on all blocks $B_r$ for $r \notin \{i, i+1\}$, its conjugate by the block-diagonal matrix $T''$ is also trivial on these blocks. Therefore, 
\[
    (T'')^{-1} \, w_{[\alpha_i]} \, (T'') = w_{[\alpha_i]}(1) \qquad (1 \leq i \leq \ell-2),
\]
as required.


\subsection{Conclusion}

Let $g_3:= T' \, T'' \in \GL_{2\ell}(R)$ be the change-of-basis matrix corresponding to the above transformations. By construction, $g_3 \in \GL_{2\ell}(R, J)$. Define
\[
    \varphi_3 := i_{[g_3]}^{-1} \circ \varphi_2.
\]
Then $\varphi_3$ is an isomorphism of $E'_{\ad, \sigma}(\Phi, R)$ onto a subgroup of $\PGL_{2\ell}(R)$ such that
\[
    \varphi_3\bigl(w_{[\alpha]}(1)\bigr) = w_{[\alpha]}(1)
    \qquad \text{for all } [\alpha] \in \Phi_\rho.
\]
Consequently, Step~2 outlined in Section~\ref{sec:outline_of_main_thm} follows.


\section{The Images of \texorpdfstring{$x_{[\alpha]}(1)$}{x(1)}} \label{sec:image_of_x(1)}


For each root $[\alpha] \in \Phi_\rho$, define
\[
    X_{[\alpha]} \coloneqq \varphi_3\bigl(x_{[\alpha]}(1)\bigr).
\]
Since $\varphi_3 = i_{[g_2g_3]}^{-1} \circ \varphi_1$ and $g_2g_3 \in \GL_{2\ell}(R,J)$, applying the remark following Lemma~\ref{lemma:compatible-representatives} with $C = g_2 g_3$ to the standard representative $x_{[\alpha]}(1) \in E'_{\pi_0, \sigma}(\Phi, R) \subseteq \SO_{2\ell}(R)$ yields a representative $x_{[\alpha]} \in \SL_{2\ell}(R)$ for $X_{[\alpha]}$ satisfying
\[
    x_{[\alpha]} \equiv x_{[\alpha]}(1) \pmod{J}.
\]
These representatives will be systematically normalized in Subsection~\ref{subsec:choice_of_x}.


\subsection{Choice of representatives for \texorpdfstring{$X_{[\alpha]}$}{X\_[alpha]}}\label{subsec:choice_of_x}
 
We begin by selecting a preliminary representative $x_{[\alpha_1]} \in \SL_{2\ell}(R)$ for the element $X_{[\alpha_1]}$, chosen such that $x_{[\alpha_1]} \equiv x_{[\alpha_1]}(1) \pmod{J}$. Consider the standard relation
\[
    h_{[\alpha_2]}(-1) \, x_{[\alpha_1]}(1) \, h_{[\alpha_2]}(-1)^{-1} = x_{[\alpha_1]}(1)^{-1}.
\]
Applying the map $\varphi_3$ to both sides yields the corresponding projective identity
\[
    h_{[\alpha_2]}(-1) \, X_{[\alpha_1]} \, h_{[\alpha_2]}(-1)^{-1} = X_{[\alpha_1]}^{-1}.
\]
Thus, there exists a scalar $\lambda \in R^{\times}$ such that
\[
    h_{[\alpha_2]}(-1) \, x_{[\alpha_1]} \, h_{[\alpha_2]}(-1)^{-1} = \lambda \, x_{[\alpha_1]}^{-1}
\]
in $\SL_{2\ell}(R)$. Taking the congruence modulo $J$, we find $\lambda \equiv 1 \pmod{J}$, and computing the determinant of both sides gives $\lambda^{2\ell} = 1$.

As established in Section~\ref{sec:image_of_h(-1)}, the $2$-primary component of the order of $\lambda$ is trivial: since $2 \in R^{\times}$ and $R$ is local, any $2$-power root of unity congruent to $1$ modulo $J$ must equal $1$. Consequently, the order of $\lambda$ is odd, and the squaring map induces an automorphism of the cyclic group $\langle\lambda\rangle$. Therefore, there exists an element $\eta \in \langle\lambda\rangle$ such that $\eta^2 = \lambda^{-1}$.

Replacing the preliminary representative $x_{[\alpha_1]}$ with $\eta \, x_{[\alpha_1]}$, we obtain the exact relation
\[
    h_{[\alpha_2]}(-1) \, x_{[\alpha_1]} \, h_{[\alpha_2]}(-1)^{-1} = x_{[\alpha_1]}^{-1}.
\]
Furthermore, because $\eta^{2\ell} = 1$, the modified representative remains in $\SL_{2\ell}(R)$, and since $\eta \equiv 1 \pmod{J}$, the congruence condition modulo $J$ is preserved.

Next, let $[\beta] \in \Phi_\rho$ be an arbitrary root of type $A_1$. There exists a product $w$ of normalized Weyl elements such that 
\[
    X_{[\beta]} = w X_{[\alpha_1]}^{\pm 1} w^{-1}.
\]
Accordingly, once $x_{[\alpha_1]}$ has been scalar-normalized, we define
\[
    x_{[\beta]} \coloneqq w \, x_{[\alpha_1]}^{\pm 1} \, w^{-1}.
\]
It is immediately clear that $x_{[\beta]} \in \SL_{2\ell}(R)$ and satisfies $x_{[\beta]} \equiv x_{[\beta]}(1) \pmod{J}$.

We now turn to the roots of type $A_1^2$, proceeding exactly as we did for type $A_1$. Consider the simple root $[\alpha_{\ell-1}]$. We first select a preliminary representative $x_{[\alpha_{\ell-1}]} \in \SL_{2\ell}(R)$ for $X_{[\alpha_{\ell-1}]}$ satisfying
\[
    x_{[\alpha_{\ell-1}]} \equiv x_{[\alpha_{\ell-1}]}(1) \pmod{J}.
\]
Utilizing the standard relation
\[
    h_{[\alpha_{\ell-2}]}(-1) \, x_{[\alpha_{\ell-1}]}(1) \, h_{[\alpha_{\ell-2}]}(-1)^{-1} = x_{[\alpha_{\ell-1}]}(1)^{-1},
\]
we rescale this preliminary matrix exactly as before to obtain a precisely normalized representative $x_{[\alpha_{\ell-1}]} \in \SL_{2\ell}(R)$ satisfying
\[
    h_{[\alpha_{\ell-2}]}(-1) \, x_{[\alpha_{\ell-1}]} \, h_{[\alpha_{\ell-2}]}(-1)^{-1} = x_{[\alpha_{\ell-1}]}^{-1}.
\]
The representatives $x_{[\beta]}$ for all remaining roots $[\beta] \sim A_1^2$ are subsequently generated from this normalized element $x_{[\alpha_{\ell-1}]}$ via Weyl conjugation.

\medskip

We now proceed to normalize the elements $x_{[\alpha]}$ for $[\alpha] \in \Phi_\rho$. 
Since any two roots of the same length are conjugate under the action of the Weyl group, and the Weyl elements have already been normalized, it suffices to verify that the representatives $x_{[\alpha]}$ attain the desired normalized form for one representative of each root length. 
Accordingly, it is enough to check the normalization for $x_{[\alpha_1]}$ and $x_{[\alpha_{\ell-1}]}$.

Let $\{ e_1, \dots, e_{2\ell} \}$ denote the standard basis of $R^{2\ell}$. Recall that we defined
\[
    B_r := \langle e_r, e_{\ell+r} \rangle \qquad (1 \leq r \leq \ell).
\]
Then
\[
    R^{2\ell} = B_1 \oplus \dots \oplus B_\ell.
\]


\subsection{Normalization of \texorpdfstring{$x_{[\alpha_1]}$}{x}}

We proceed by considering separate cases according to the value of $\ell$.

\medskip

\noindent \textbf{Case $\ell \geq 6$.}
Since $x_{[\alpha_1]}(1)$ commutes with $h_{[\alpha_1]}(-1)$ as well as $h_{[\alpha_i]}(-1)$ for each $i = 3, \dots, \ell-2$, this commutativity is preserved for their images under the map $\varphi_3$. By the standard scalar-normalization argument, the chosen representative of $X_{[\alpha_1]}$ also satisfies the same commuting relations in $\GL_{2\ell}(R)$. That is, 
\[
    x_{[\alpha_1]} \, h_{[\alpha_i]}(-1) = h_{[\alpha_i]}(-1) \, x_{[\alpha_1]},
    \quad \text{for all } i = 3, \dots, \ell-2.
\]
Consequently, with respect to the decomposition 
\[
    R^{2\ell} = B_1 \oplus \dots \oplus B_\ell,
\]
the matrix $x_{[\alpha_1]}$ assumes the block form
\[
    x_{[\alpha_1]} = \begin{pmatrix} 
        A & B & 0 & \dots & 0 \\ 
        C & D & 0 & \dots & 0 \\ 
        0 & 0 & u_1 & \dots & 0 \\ 
        \vdots & \vdots & \vdots & \ddots & \vdots \\ 
        0 & 0 & 0 & \dots & u_{\ell-2} 
    \end{pmatrix},
\]
where $A, B, C, D$, and $u_i$ (for $i = 1,\dots,\ell-2$) are $2 \times 2$ matrices satisfying
\[
    A, D, u_i \equiv I_2 \pmod{J}, \qquad 
    B \equiv \begin{pmatrix} 1 & 0 \\ 0 & 0 \end{pmatrix} \pmod{J}, \qquad 
    C \equiv \begin{pmatrix} 0 & 0 \\ 0 & -1 \end{pmatrix} \pmod{J}.
\]

From the standard relation
\[
    h_{[\alpha_2]}(-1)\, x_{[\alpha_1]}(1)\, h_{[\alpha_2]}(-1)^{-1} = x_{[\alpha_1]}(-1),
\]
we deduce the corresponding projective relation for $X_{[\alpha_1]}$. Given our choice of the representative $x_{[\alpha_1]}$ in $\GL_{2\ell}(R)$ (cf. Subsection~\ref{subsec:choice_of_x}), this relation lifts exactly to
\[
    h_{[\alpha_2]}(-1)\, x_{[\alpha_1]}\, h_{[\alpha_2]}(-1)^{-1} = x_{[\alpha_1]}^{-1}.
\]
By examining the action on the blocks $B_3, \dots, B_\ell$ and noting that $h_{[\alpha_2]}(-1)$ acts as $\pm I_2$ on these subspaces, it follows that
\[
    u_i = u_i^{-1}, \qquad \text{i.e.,} \qquad u_i^2 = I_2.
\]
Since $u_i \equiv I_2 \pmod{J}$, we conclude that
\[
    u_i = I_2 \qquad (i=1,\dots,\ell-2).
\]

Let
\[
    B' = B_4 \oplus \cdots \oplus B_\ell.
\]
Then, with respect to the decomposition
\[
    R^{2\ell} = B_1 \oplus B_2 \oplus B_3 \oplus B',
\]
the matrix $x_{[\alpha_1]}$ takes the form
\[
    x_{[\alpha_1]} = \begin{pmatrix} 
        A & B & 0 & 0 \\ 
        C & D & 0 & 0 \\ 
        0 & 0 & I_2 & 0 \\ 
        0 & 0 & 0 & I 
    \end{pmatrix}, \qquad \text{where } I = I_{2\ell-6}.
\]

Adhering to the conventions established in Subsection~\ref{subsec:choice_of_x}, we fix the representatives for $X_{[\alpha_2]}$, $X_{-[\alpha_2]}$, and $X_{[\alpha_1]+[\alpha_2]}$ by imposing the following exact Weyl conjugation identities:
\begin{align*} 
    x_{[\alpha_2]} &:= (w_{[\alpha_1]}(1) w_{[\alpha_2]}(1))\, x_{[\alpha_1]} \, (w_{[\alpha_1]}(1) w_{[\alpha_2]}(1))^{-1}, \\ 
    x_{-[\alpha_2]} &:= w_{[\alpha_2]}(1)\, x_{[\alpha_2]}^{-1} \, w_{[\alpha_2]}(1)^{-1}, \\ 
    x_{[\alpha_1]+[\alpha_2]} &:= w_{[\alpha_2]}(1)\, x_{[\alpha_1]}^{-1} \, w_{[\alpha_2]}(1)^{-1}. 
\end{align*}

To simplify the notation, we set
\[
    X_1 := X_{[\alpha_1]}, \quad X_2 := X_{[\alpha_2]}, \quad X_{-2} := X_{-[\alpha_2]}, \quad X_3 := X_{[\alpha_1]+[\alpha_2]},
\]
and denote the corresponding representatives by $x_1, x_2, x_{-2}$, and $x_3$, respectively. With respect to the decomposition $R^{2\ell} = B_1 \oplus B_2 \oplus B_3 \oplus B'$, these matrices are given by
\[
    x_2 = \begin{pmatrix} 
        I_2 & 0 & 0 & 0 \\ 
        0 & A & B & 0 \\ 
        0 & C & D & 0 \\ 
        0 & 0 & 0 & I 
    \end{pmatrix},
    \quad
    x_{-2}^{-1} = \begin{pmatrix} 
        I_2 & 0 & 0 & 0 \\ 
        0 & D & -C & 0 \\ 
        0 & -B & A & 0 \\ 
        0 & 0 & 0 & I 
    \end{pmatrix},
    \quad
    x_3^{-1} = \begin{pmatrix} 
        A & 0 & -B & 0 \\ 
        0 & I_2 & 0 & 0 \\ 
        -C & 0 & D & 0 \\ 
        0 & 0 & 0 & I 
    \end{pmatrix}.
\]

From the commutativity relation
\[
    [x_{[\alpha_1]}(1), x_{[\alpha_1]+[\alpha_2]}(1)] = 1,
\]
we obtain $x_1 x_3 = \lambda \, x_3 x_1$, for some $\lambda \in R^{\times}$. 
Comparing the $(2,2)$-blocks on both sides yields $D = \lambda D$. Since $D \equiv I_2 \pmod{J}$, the matrix $D$ is invertible, forcing $\lambda = 1$.
Comparing the remaining blocks, we obtain
\[
    (A - I_2)B = 0, \qquad C(A - I_2) = 0, \qquad CB = 0.
\]

Similarly, from the relation
\[
    [x_{[\alpha_1]}(1), x_{-[\alpha_2]}(1)] = 1,
\]
we get $x_1 x_{-2} = \lambda \, x_{-2} x_1$ for some $\lambda \in R^{\times}$. 
Comparison of the $(1,1)$-blocks yields $A = \lambda A$. 
Since $A \equiv I_2 \pmod{J}$, the matrix $A$ is invertible, and again $\lambda = 1$. 
By comparing the remaining blocks, we get
\[
    B(D - I_2) = 0, \qquad (D - I_2)C = 0, \qquad BC = 0.
\]

Using the exact relation
\[
    h_{[\alpha_2]}(-1)\, x_{[\alpha_1]}\, h_{[\alpha_2]}(-1)^{-1} = x_{[\alpha_1]}^{-1},
\]
we deduce
\[
    A^2 - BC = I_2, \qquad D^2 - CB = I_2.
\]
Thus, $A^2 = I_2 = D^2$.
Since $A, D \equiv I_2 \pmod{J}$, it follows that $A = I_2 = D$.

By using the above relations, we get 
\[
    x_3 = \begin{pmatrix}
        I_2 & 0 & B & 0 \\
        0 & I_2 & 0 & 0 \\
        C & 0 & I_2 & 0 \\
        0 & 0 & 0 & I
    \end{pmatrix}.
\]

Next, we apply the standard Chevalley relation
\[
    [x_{[\alpha_1]}(1), x_{[\alpha_2]}(1)] = x_{[\alpha_1+\alpha_2]}(1).
\]
For our chosen representatives, this lifted relation is exact. 
Indeed, if a scalar factor were to appear in the lift, comparing the $(2,2)$-blocks of both sides would compel this scalar to be $1$, as both sides act as the identity $I_2$ on $B_2$. Hence, we obtain $[x_1,x_2]=x_3$.
A direct calculation provides 
\[
    B^2 = B \quad \text{and} \quad C^2 = -C.
\]

Now, consider the root
\[
    [\widetilde{\alpha}_1] = \nu_1 + \nu_2 = [\alpha_1] + 2[\alpha_2] + \cdots + 2[\alpha_{\ell-2}] + 2[\alpha_{\ell-1}].
\]
Then $[\widetilde\alpha_1]$ is a long root. The roots $[\alpha_1]$ and $[\widetilde\alpha_1]$ are strictly orthogonal.

Observe that, in the standard group,
\[
    w_{[\widetilde{\alpha}_1]}(1)\, x_{[\alpha_1]}(1)\, w_{[\widetilde{\alpha}_1]}(1)^{-1} = x_{[\alpha_1]}(1).
\]
By the usual scalar-normalization argument, we get
\[
    w_{[\widetilde{\alpha}_1]}(1) x_{[\alpha_1]} = x_{[\alpha_1]} w_{[\widetilde{\alpha}_1]}(1).
\]
A direct calculation yields
\[
    -B J_2 = J_2 C
    \quad \text{and} \quad
    C J_2 = - J_2 B.
\]

This along with the previous relations concerning $B$ and $C$ gives
\[
    B = \begin{pmatrix} p & q \\ -q & -q^2/p \end{pmatrix}, \qquad C = \begin{pmatrix} q^2/p & q \\ -q & -p \end{pmatrix},
\]
where $p, q \in R$ satisfying 
\[
    p \equiv 1 \pmod{J},
    \qquad
    q \equiv 0 \pmod{J},
    \qquad \text{and} \qquad
    p^2 - q^2 = p.
\]
Consequently, the matrix $x_1$ assumes the form
\[
    x_1 = x_1(p,q) := \begin{pmatrix} 
        1 & 0 & p & q & 0 \\ 
        0 & 1 & -q & -q^2/p & 0 \\ 
        q^2/p & q & 1 & 0 & 0 \\ 
        -q & -p & 0 & 1 & 0 \\ 
        0 & 0 & 0 & 0 & I_{2\ell-4} 
    \end{pmatrix},
\]
where $p \equiv 1 \pmod{J}$ and $q \equiv 0 \pmod{J}$ such that
$p^2 - q^2 = p$.

\medskip

We now construct a change of basis that preserves all previously fixed elements, namely $h_{[\alpha]}(-1)$ and $w_{[\alpha]}(1)$ for every root $[\alpha] \in \Phi_\rho$, and with respect to which the matrix $x_1$ assumes the same form as $x_{[\alpha_1]}(1)$ in the original basis.

Set
\[
    T_0 = \begin{pmatrix}
        1 & -q/p \\
        -q/p & 1
    \end{pmatrix}
    \qquad \text{and} \qquad
    T' = \diag (T_0, \dots, T_0, I_2).
\]
It is straightforward to verify that the change of basis induced by the matrix $T'$ fixes the already normalized elements $h_{[\alpha]}(-1)$ and $w_{[\alpha]}(1)$ for all $[\alpha] \in \Phi_\rho$. That is,
\[
    (T')^{-1} \, h_{[\alpha]}(-1) \, T' = h_{[\alpha]}(-1)
    \qquad \text{and} \qquad
    (T')^{-1} \, w_{[\alpha]}(1) \, T' = w_{[\alpha]}(1).
\]
Moreover, this transformation yields the desired normalization
\[
    (T')^{-1} \, x_{[\alpha_1]} \, T' = x_{[\alpha_1]}(1).
\]

As noted earlier, every $x_{[\alpha]}(1)$ with $[\alpha]\sim A_1$ is obtained from $x_{[\alpha_1]}(1)$ by conjugation with a product of normalized Weyl elements. Since $T'$ commutes with each such element and normalizes $x_{[\alpha_1]}$, all these root elements are normalized under this change of basis.

\medskip

\noindent \textbf{Case $\ell = 5$.} 
This case is essentially similar to the case $\ell \geq 6$. The only difference is that additional relations are needed to obtain the required initial block decomposition.

The element $x_{[\alpha_1]}(1)$ commutes with both $h_{[\alpha_1]}(-1)$ and $h_{[\alpha_3]}(-1)$. Consequently, this property is preserved projectively for its image under $\varphi_3$. By the standard scalar-normalization argument, these commuting relations also hold for the chosen representatives. It follows that the matrix $x_{[\alpha_1]}$ admits the block decomposition
\[
    x_1 := x_{[\alpha_1]} =
    \begin{pmatrix}
        A_1 & 0 & 0 \\
        0 & A_2 & 0 \\
        0 & 0 & A_3
    \end{pmatrix},
\]
where $A_1, A_2 \in M_4(R)$ and $A_3 \in M_2(R)$, corresponding respectively to the subspaces $B_1 \oplus B_2$, $B_3 \oplus B_4$, and $B_5$.

From the standard relation
\[
    h_{[\alpha_2]}(-1)\, x_{[\alpha_1]}(1)\, h_{[\alpha_2]}(-1)^{-1}
    =
    x_{[\alpha_1]}(1)^{-1},
\]
and the usual scalar-normalization argument, we deduce the exact lifted relation
\[
    h_{[\alpha_2]}(-1)\, x_{[\alpha_1]}\, h_{[\alpha_2]}(-1)^{-1}
    =
    x_{[\alpha_1]}^{-1}.
\]
This immediately implies that $A_3 = I_2$.

Our next objective is to show that $A_2 = I_4$. 
Since the standard element $x_{[\alpha_1]}(1)$ commutes with both $w_{[\alpha_3]}(1)$ and $w_{[\widetilde{\alpha}_3]}(1)$, their images under $\varphi_3$ must also commute. 
By the standard scalar-normalization argument, these commuting relations lift to our chosen representative $x_{[\alpha_1]}$, meaning it exactly commutes with $w_{[\alpha_3]}(1)$ and $w_{[\widetilde{\alpha}_3]}(1)$. 

A direct calculation utilizing these constraints restricts $A_2$ to the form
\[
    A_2 = \begin{pmatrix}
        a & b & c & d \\
        b & a & d & c \\
        -c & -d & a & b \\
        -d & -c & b & a
    \end{pmatrix},
\]
where the scalars $a, b, c, d \in R$ satisfy $a \equiv 1 \pmod{J}$ and $b, c, d \in J$.

Next, consider the standard identity
\[
    x_{[\alpha_1]}(1)\,\bigl(w_{[\alpha_2]}(1)\, x_{[\alpha_1]}(1)\, w_{[\alpha_2]}(1)^{-1}\bigr) =
    \bigl(w_{[\alpha_2]}(1)\, x_{[\alpha_1]}(1)\, w_{[\alpha_2]}(1)^{-1}\bigr)\, x_{[\alpha_1]}(1).
\]
Applying $\varphi_3$ and lifting to our representative $x_{[\alpha_1]}$ yields the corresponding projective relation
\[
    x_{[\alpha_1]} \,\bigl(w_{[\alpha_2]}(1)\, x_{[\alpha_1]} \, w_{[\alpha_2]}(1)^{-1}\bigr) = \lambda \, \bigl(w_{[\alpha_2]}(1)\, x_{[\alpha_1]}\, w_{[\alpha_2]}(1)^{-1}\bigr)\, x_{[\alpha_1]}
\]
for some scalar $\lambda \in R^{\times}$. 
Evaluating this matrix equation directly forces
\[
    \lambda = a = 1 \qquad \text{and} \qquad b = c = d = 0.
\]
Consequently, $A_2 = I_4$, as desired.

Now, proceeding exactly as in the $\ell \geq 6$ case, we deduce that
\[
    A_1 =
    \begin{pmatrix}
        1 & 0 & p & q \\
        0 & 1 & -q & -q^2/p \\
        q^2/p & q & 1 & 0 \\
        -q & -p & 0 & 1
    \end{pmatrix},
\]
where $p \equiv 1 \pmod{J}$, $q \in J$, and $p^2 - q^2 = p$. Consequently, using the same change-of-basis argument used for $\ell \geq 6$, we conclude that $x_1$ can be normalized to the desired standard form.

\medskip

\noindent \textbf{Case $\ell = 4$.}
This case requires a more delicate analysis because fewer commutativity relations are available.

Since the standard element $x_{[\alpha_1]}(1)$ commutes with $h_{[\alpha_1]}(-1)$, this commutativity holds projectively for their images under $\varphi_3$. By the standard scalar-normalization argument, this relation lifts to an exact identity for our chosen representative. Consequently, the matrix $x_1 \coloneqq x_{[\alpha_1]}$ admits the block-diagonal decomposition
\[
    x_1 =
    \begin{pmatrix}
        A_1 & 0 \\
        0 & A_2
    \end{pmatrix},
\]
where $A_1, A_2 \in M_4(R)$ correspond to the restrictions to the respective subspaces $B_1 \oplus B_2$ and $B_3 \oplus B_4$.

We claim that
\[
    A_1 =
    \begin{pmatrix}
        1 & 0 & p & q \\
        0 & 1 & -q & -q^2/p \\
        q^2/p & q & 1 & 0 \\
        -q & -p & 0 & 1
    \end{pmatrix},
\]
for some scalars $p, q \in R$ satisfying $p \equiv 1 \pmod{J}$, $q \in J$, and $p^2 - q^2 = p$, and that $A_2 = I_4$. 

Because $x_{[\alpha_1]}(1)$ commutes with $w_{[\widetilde{\alpha}_1]}(1)$, this commutativity also transfers projectively to their $\varphi_3$-images. Applying the standard scalar-normalization argument, the relation lifts exactly to the chosen representative. Consequently, the block $A_1$ is constrained to the form
\[
    A_1 = \begin{pmatrix}
        a_1 & a_2 & a_3 & a_4 \\
        b_1 & b_2 & b_3 & b_4 \\
        -b_4 & -b_3 & b_2 & b_1 \\
        -a_4 & -a_3 & a_2 & a_1
    \end{pmatrix},
\]
where $a_i, b_i \in R$ satisfy $a_1, a_3, b_2 \equiv 1 \pmod{J}$ and $a_2, a_4, b_1, b_3, b_4 \in J$.

Next, as in the case $\ell \geq 6$, we define the elements
\begin{align*} 
    x_{[\alpha_2]} &\coloneqq \bigl(w_{[\alpha_1]}(1) w_{[\alpha_2]}(1)\bigr)\, x_{[\alpha_1]} \, \bigl(w_{[\alpha_1]}(1) w_{[\alpha_2]}(1)\bigr)^{-1}, \\ 
    x_{-[\alpha_2]} &\coloneqq w_{[\alpha_2]}(1)\, x_{[\alpha_2]}^{-1} \, w_{[\alpha_2]}(1)^{-1}, \\ 
    x_{[\alpha_1]+[\alpha_2]} &\coloneqq w_{[\alpha_2]}(1)\, x_{[\alpha_1]}^{-1} \, w_{[\alpha_2]}(1)^{-1}. 
\end{align*}
To simplify the notation, we set
\[
    X_1 \coloneqq X_{[\alpha_1]}, \quad X_2 \coloneqq X_{[\alpha_2]}, \quad X_{-2} \coloneqq X_{-[\alpha_2]}, \quad X_3 \coloneqq X_{[\alpha_1]+[\alpha_2]},
\]
and denote their corresponding chosen lifts by $x_1$, $x_2$, $x_{-2}$, and $x_3$, respectively.

From the exact commutativity relations
\[
    [x_{[\alpha_1]}(1), x_{[\alpha_1]+[\alpha_2]}(1)] = 1 
    \qquad \text{and} \qquad
    [x_{[\alpha_1]}(1), x_{-[\alpha_2]}(1)] = 1,
\]
we deduce the projective relations
\[
    x_1 x_3 = \lambda \, x_3 x_1
    \qquad \text{and} \qquad
    x_1 x_{-2} = \mu \, x_{-2} x_1,
\]
for some scalars $\lambda, \mu \in R^{\times}$. 
Comparing the $(1,1)$-entry in the first equation and the $(3,3)$-entry in the second equation yields
\[
    (a_1^2 + a_2 b_1)(1-\lambda) = 0
    \qquad \text{and} \qquad
    (b_2^2 + a_2 b_1)(1-\mu) = 0.
\]
Because $a_1^2 + a_2 b_1 \equiv 1 \pmod{J}$ and $b_2^2 + a_2 b_1 \equiv 1 \pmod{J}$, both elements are invertible in the local ring $R$. This immediately forces $1-\lambda = 0$ and $1-\mu = 0$, implying $\lambda = \mu = 1$.
Further evaluation of these matrix equations constrains the blocks to
\[
    A_1 = \begin{pmatrix}
        a_1 & a_2 & a_3 & a_4 \\
        b_1 & b_2 & -a_4 & b_4 \\
        -b_4 & a_4 & b_2 & b_1 \\
        -a_4 & -a_3 & a_2 & a_1
    \end{pmatrix}
    \qquad \text{and} \qquad
    A_2 = \begin{pmatrix}
        u_1 & 0 \\
        0 & u_2
    \end{pmatrix},
\]
for some submatrices $u_1, u_2 \in M_2(R)$ satisfying $u_1, u_2 \equiv I_2 \pmod{J}$.

Now, utilizing the relation
\[
    h_{[\alpha_2]}(-1) \, x_{[\alpha_1]}(1) \, h_{[\alpha_2]}(-1)^{-1} = x_{[\alpha_1]}(-1),
\]
we obtain the exact identity
\[
    x_1 \, h_{[\alpha_2]}(-1) \, x_1 = h_{[\alpha_2]}(-1).
\]
A direct calculation using this constraint demonstrates that 
\[
    A_1 = \begin{pmatrix}
        a_1 & 0 & a_3 & a_4 \\
        0 & a_1 & -a_4 & b_4 \\
        -b_4 & a_4 & a_1 & 0 \\
        -a_4 & -a_3 & 0 & a_1
    \end{pmatrix}
    \qquad \text{and} \qquad
    u_1 = u_2 = I_2.
\]
This establishes that $A_2 = I_4$, as claimed.

Re-evaluating the commutativity relation $x_1 x_3 = x_3 x_1$ with this refined matrix yields $a_1 = 1$ and $b_4 = -a_4^2/a_3$. By relabeling the parameters as $p \coloneqq a_3$ and $q \coloneqq a_4$, we arrive at the final form of $A_1$:
\[
    A_1 = \begin{pmatrix}
        1 & 0 & p & q \\
        0 & 1 & -q & -q^2/p \\
        q^2/p & q & 1 & 0 \\
        -q & -p & 0 & 1
    \end{pmatrix},
\]
where $p, q \in R$ satisfy $p \equiv 1 \pmod{J}$ and $q \in J$.

Finally, the standard Chevalley relation
\[
    [x_{[\alpha_1]}(1), x_{[\alpha_2]}(1)] = x_{[\alpha_1]+[\alpha_2]}(1)
\]
implies the projective identity
\[
    [x_1, x_2] = \lambda \, x_3.
\]
for some $\lambda \in R^{\times}$. 
A direct calculation yields $\lambda = 1$ and confirms that $p^2 - q^2 = p$, as desired. 

Thus, in the case $\ell = 4$, we obtain the explicit representative
\[
    x_1 = x_1(p,q) \coloneqq
    \begin{pmatrix}
        1 & 0 & p & q & 0 \\
        0 & 1 & -q & -q^2/p & 0 \\
        q^2/p & q & 1 & 0 & 0 \\
        -q & -p & 0 & 1 & 0 \\
        0 & 0 & 0 & 0 & I_4
    \end{pmatrix}.
\]

Applying the same change-of-basis matrix $T'$ utilized for the $\ell \geq 6$ case, we conclude that $x_1$ can be normalized to the required canonical form.

\medskip

Therefore, in all cases, applying the change-of-basis matrix $T'$ normalizes the representative $x_{[\alpha_1]}$. Because every representative $x_{[\alpha]}$ corresponding to a root $[\alpha]$ of type $A_1$ is obtained from $x_{[\alpha_1]}$ via conjugation by a product of normalized Weyl elements, it follows that all such elements are simultaneously normalized.


\subsection{Normalization of \texorpdfstring{$x_{[\alpha_{\ell-1}]}$}{x\_[\textbackslash alpha\_\{l-1\}]}}

By the preceding arguments, we may assume without loss of generality that
\[
    \varphi_3 \bigl(h_{[\alpha]}(-1)\bigr) = h_{[\alpha]}(-1)
    \quad \text{and} \quad
    \varphi_3 \bigl(w_{[\alpha]}(1)\bigr) = w_{[\alpha]}(1)
\]
for all $[\alpha] \in \Phi_\rho$, and moreover, that
\[
    \varphi_3 \bigl(x_{[\alpha]}(1)\bigr) = x_{[\alpha]}(1)
\]
for all roots $[\alpha] \in \Phi_\rho$ of type $A_1$.

It remains to normalize the elements $x_{[\beta_i]}$ for all roots $[\beta_i] \sim A_1^2$. As previously established, it suffices to consider the representative corresponding to the simple root $[\alpha_{\ell-1}]$. For the remainder of this subsection, we set 
\[
    x_{\ell-1} \coloneqq x_{[\alpha_{\ell-1}]}.
\]

First, we claim that with respect to the decomposition $R^{2\ell} = B' \oplus B''$, where $B' = B_1 \oplus \cdots \oplus B_{\ell-2}$ and $B''= B_{\ell-1} \oplus B_{\ell}$, the matrix $x_{\ell-1}$ assumes the block form
\[
    x_{\ell-1} = \begin{pmatrix}
        I_{2\ell-4} & 0 \\
        0 & A
    \end{pmatrix},
\]
where $A \in M_4(R)$ such that 
\[
    A \equiv \begin{pmatrix}
        1 & -1 & 1 & 1 \\ 
        0 & 1 & 0 & 0 \\
        0 & -1 & 1 & 0 \\
        0 & -1 & 0 & 1
    \end{pmatrix} \pmod{J}.
\]

Assume first that $\ell \geq 5$. In this case, since $x_{[\alpha_{\ell-1}]}(1)$ commutes with $h_{[\alpha_i]}(-1)$ for each $1 \leq i \leq \ell-3$, this commutativity holds projectively for their images under $\varphi_3$. By the usual scalar-normalization argument, these relations lift to exact equalities in $\GL_{2\ell}(R)$ for the chosen representative. Consequently, with respect to the standard decomposition
\[
    R^{2\ell} = B_1 \oplus \cdots \oplus B_{\ell},
\]
the matrix $x_{\ell-1}$ assumes the block form
\[
    x_{\ell-1} = \begin{pmatrix} 
        u_1 & \cdots & 0 & 0 \\ 
        \vdots & \ddots & \vdots & \vdots \\ 
        0 & \cdots & u_{\ell-2} & 0 \\ 
        0 & \cdots & 0 & A 
    \end{pmatrix},
\]
where $A \in M_4(R)$ and $u_i \in M_2(R)$ ($1 \leq i \leq \ell-2$) satisfy
\[
    A \equiv \begin{pmatrix}
        1 & -1 & 1 & 1 \\ 
        0 & 1 & 0 & 0 \\
        0 & -1 & 1 & 0 \\
        0 & -1 & 0 & 1
    \end{pmatrix} \pmod{J}, \qquad 
    u_i \equiv I_2 \pmod{J}.
\]

From the standard relation
\[
    h_{[\alpha_{\ell-2}]}(-1)\, x_{[\alpha_{\ell-1}]}(1)\, h_{[\alpha_{\ell-2}]}(-1)^{-1} = x_{[\alpha_{\ell-1}]}(-1),
\]
we deduce the corresponding projective relation for its image. Given our choice of the representative $x_{\ell-1}$ in $\GL_{2\ell}(R)$ (cf. Subsection~\ref{subsec:choice_of_x}), this relation lifts exactly to
\[
    h_{[\alpha_{\ell-2}]}(-1)\, x_{\ell-1} \, h_{[\alpha_{\ell-2}]}(-1)^{-1} = x_{\ell-1}^{-1}.
\]
By examining the action on the submodules $B_1, \dots, B_{\ell-2}$ and noting that $h_{[\alpha_{\ell-2}]}(-1)$ acts as $\pm I_2$ on these subspaces, it follows that
\[
    u_i = u_i^{-1}, \qquad \text{i.e.,} \qquad u_i^2 = I_2.
\]
Since $u_i \equiv I_2 \pmod{J}$, we conclude that
\[
    u_i = I_2 \qquad (1 \leq i \leq \ell-2).
\]
Consequently, the claim is proved for $\ell \geq 5$.

We now turn to the case where $\ell=4$. Since the standard element $x_{[\alpha_3]}(1)$ commutes with $h_{[\alpha_1]}(-1)$, this commutativity is preserved for their images under $\varphi_3$. By the standard scalar-normalization argument, this exact relation lifts to our chosen representative $x_3$, yielding
\[
    h_{[\alpha_1]}(-1) \, x_3 = x_3 \, h_{[\alpha_1]}(-1).
\]
Consequently, with respect to the standard decomposition
\[
    R^8 = (B_1 \oplus B_2) \oplus (B_3 \oplus B_4),
\]
the element $x_3$ assumes the block-diagonal form
\[
    x_3 = \begin{pmatrix}
        U & 0 \\
        0 & A
    \end{pmatrix},
\]
where $U \in \mathrm{GL}_4(R)$ satisfies $U \equiv I_4 \pmod{J}$, and $A \in \mathrm{GL}_4(R)$ satisfies
\[
    A \equiv \begin{pmatrix}
        1 & -1 & 1 & 1 \\
        0 & 1 & 0 & 0 \\
        0 & -1 & 1 & 0 \\
        0 & -1 & 0 & 1
    \end{pmatrix} \pmod{J}.
\]

We next show that $U = I_4$. Observe that the standard element $x_{[\alpha_3]}(1)$ commutes with the four long-root elements $x_{\pm[\alpha_1]}(1)$ and $x_{\pm[\widetilde\alpha_1]}(1)$.

For each root $\gamma \in \{\pm[\alpha_1], \pm[\widetilde\alpha_1]\}$, projective commutativity implies the existence of a scalar $\lambda_\gamma \in R^\times$ such that
\[
    x_\gamma(1) \, x_3 = \lambda_\gamma \, x_3 \, x_\gamma(1).
\]
Since $x_\gamma(1)$ acts as the identity on the subspace $B_3 \oplus B_4$, comparing the lower-right blocks of the preceding identity yields $A = \lambda_\gamma A$.

Because $A$ is invertible, it immediately follows that $\lambda_\gamma = 1$. Hence, the block $U$ exactly commutes with the restrictions of all four elements $x_{\pm[\alpha_1]}(1)$ and $x_{\pm[\widetilde\alpha_1]}(1)$ to the subspace $B_1 \oplus B_2$.

Expressing $U$ with respect to the decomposition $B_1 \oplus B_2$, the exact commutation relations with $x_{\pm[\alpha_1]}(1)$ constrain $U$ to the form
\[
    U = \begin{pmatrix}
        c & 0 & 0 & -r \\
        0 & d & -s & 0 \\
        0 & r & c & 0 \\
        s & 0 & 0 & d
    \end{pmatrix}
\]
for some scalars $c, d, r, s \in R$. The commutation relations with $x_{\pm[\widetilde\alpha_1]}(1)$ subsequently force $r = s = 0$ and $c = d$.
Therefore, $U = c I_4$. Since $U \equiv I_4 \pmod{J}$, we must have $c \equiv 1 \pmod{J}$.

Finally, our specific choice of the representative $x_3$ (as defined in Subsection~\ref{subsec:choice_of_x}) guarantees the exact relation
\[
    h_{[\alpha_2]}(-1) \, x_3 \, h_{[\alpha_2]}(-1)^{-1} = x_3^{-1}.
\]
Restricting this identity to the subspace $B_1 \oplus B_2$ yields
\[
    cI_4 = c^{-1}I_4,
\]
which implies $c^2 = 1$. Since $c \equiv 1 \pmod{J}$, it follows that $c = 1$. Consequently, $x_3$ takes the final form
\[
    x_3 = \begin{pmatrix}
        I_4 & 0 \\
        0 & A
    \end{pmatrix}.
\]
Thus, the claim is also proved for $\ell=4$.

\medskip

We now determine the precise initial form of $A = (a_{i,j})$. Since $x_{-[\alpha_{\ell-2}]}(1)$ and $x_{[\alpha_{\ell-1}]}(1)$ commute, their images under $\varphi_3$ also commute. Thus, there exists $\lambda \in R^\times$ such that
\[
    x_{-[\alpha_{\ell-2}]}(1) \, x_{\ell-1} = \lambda \, x_{\ell-1} \, x_{-[\alpha_{\ell-2}]}(1).
\]
Comparing the action on the subspace $B_1 \oplus \cdots \oplus B_{\ell-3}$ on both sides, we deduce that $\lambda = 1$. Next, comparing the remaining entries yields
\[
    a_{1,1} = 1 = a_{2,2} 
    \qquad \text{and} \qquad
    a_{2,1} = a_{2,3} = a_{2,4} = a_{3,1} = a_{4,1} = 0.
\]

Invoking the exact relation
\[
    x_{\ell-1} \, h_{[\alpha_{\ell-2}]}(-1) \, x_{\ell-1} = h_{[\alpha_{\ell-2}]}(-1),
\]
we further obtain
\[
    a_{3,3} = 1 = a_{4,4} 
    \qquad \text{and} \qquad
    a_{3,4} = a_{4,3} = 0.
\]

Finally, the relation $(x_{[\alpha_{\ell-1}]}(1) \, w_{[\alpha_{\ell-1}]}(1))^3 = I_{2\ell}$ implies 
\[
    (x_{\ell-1} \, w_{[\alpha_{\ell-1}]}(1))^3 = \lambda \, I_{2\ell}
\]
for some $\lambda \in R^\times$. From this identity, we deduce 
\[
    \lambda = 1,
    \quad
    a_{1,2} = -1,
    \quad
    a_{1,4} = a_{1,3},
    \quad \text{and} \quad
    a_{3,2} = a_{4,2} = -1/a_{1,3}.
\]

Consequently, after renaming parameters (letting $p \coloneqq a_{1,3}$), we conclude that with respect to the decomposition $R^{2\ell} = B' \oplus B''$, the matrix $x_{\ell-1}$ assumes the block form
\[
    x_{\ell-1} = \begin{pmatrix}
        I_{2\ell-4} & 0 \\
        0 & A
    \end{pmatrix},
\]
where $A \in M_4(R)$ is given by 
\[
    A = \begin{pmatrix}
        1 & -1 & p & p \\ 
        0 & 1 & 0 & 0 \\
        0 & -1/p & 1 & 0 \\
        0 & -1/p & 0 & 1
    \end{pmatrix}
    \qquad \text{for some } p \in R \text{ with } p \equiv 1 \pmod{J}.
\]

\medskip

Finally, we construct a change of basis that preserves all previously fixed elements, namely $h_{[\alpha]}(-1)$ and $w_{[\alpha]}(1)$ for every root $[\alpha] \in \Phi_\rho$, as well as $x_{[\alpha]}(1)$ for every $[\alpha] \in \Phi_\rho$ of type $A_1$; and under which the matrix $x_{\ell-1}$ takes on the exact same form as $x_{[\alpha_{\ell-1}]}(1)$ in the original basis.

Set
\[
    T'' = \diag \bigl(I_2, \dots, I_2, (1/p) \, I_2\bigr).
\]
It is straightforward to verify that the change of basis induced by the matrix $T''$ fixes the already normalized elements $h_{[\alpha]}(-1)$ and $w_{[\alpha]}(1)$ for all $[\alpha] \in \Phi_\rho$, as well as $x_{[\alpha]}(1)$ for every $[\alpha] \in \Phi_\rho$ with $[\alpha] \sim A_1$. Moreover, this transformation yields the desired normalization
\[
    (T'')^{-1} \, x_{[\alpha_{\ell-1}]} \, T'' = x_{[\alpha_{\ell-1}]}(1).
\]


\subsection{Conclusion}

Let $g_4:= T' \, T'' \in \GL_{2\ell}(R)$. By construction, $g_4 \in \GL_{2\ell}(R, J)$. Define
\[
    \varphi_4 := i_{[g_4]}^{-1} \circ \varphi_3.
\]
Then $\varphi_4$ is an isomorphism of $E'_{\ad, \sigma}(\Phi, R)$ onto a subgroup of $\PGL_{2\ell}(R)$ such that
\[
    \varphi_4\bigl(x_{[\alpha]}(1)\bigr) = x_{[\alpha]}(1)
    \qquad \text{for all } [\alpha] \in \Phi_\rho.
\]
Consequently, Step~3 outlined in Section~\ref{sec:outline_of_main_thm} follows.


\section{The Images of \texorpdfstring{$x_{[\alpha]}(t)$}{x(t)}} \label{sec:image_of_x(t)}


Finally, to complete the proof of Theorem~\ref{MT_local1}, it remains to carry out Step~4 outlined in Section~\ref{sec:outline_of_main_thm}. Recall that 
\[
    \varphi_5 = i_{[C_2]}^{-1} \circ \varphi_1,
\]
where $C_2 \in \GL_{2\ell}(R, J)$, is an automorphism of $E'_{\ad, \sigma} (\Phi, R)$ satisfying
\[
    \varphi_5 \bigl(x_{[\alpha]}(1)\bigr) = x_{[\alpha]}(1), 
    \qquad
    \varphi_5 \bigl(w_{[\alpha]}(1)\bigr) = w_{[\alpha]}(1),
    \qquad
    \varphi_5 \bigl(h_{[\alpha]}(-1)\bigr) = h_{[\alpha]}(-1)
\]
for all $[\alpha] \in \Phi_\rho$. Moreover, modulo $J$, the automorphism $\varphi_5$ induces a field automorphism $f \colon k \to k$ of the residue field $k = R/J$. 

Our objective is to show that there exists a ring automorphism $\mu$ of $R$ commuting with $\theta$ (i.e., $\mu \circ \theta = \theta \circ \mu$) such that
\[
    \varphi_5 \bigl(x_{[\alpha]}(t)\bigr) = x_{[\alpha]}\bigl(\mu(t)\bigr)
    \qquad
    \text{for every } [\alpha] \in \Phi_\rho \text{ and } t \in R_{[\alpha]}.
\]

Let
\[
    X_{[\alpha]}(t) \coloneqq \varphi_5 \bigl(x_{[\alpha]}(t)\bigr) \in \PGL_{2\ell}(R).
\]
Our first task, which we carry out in the following subsection, is to choose a distinguished representative $x'_{[\alpha]}(t) \in \SL_{2\ell}(R)$ for each $X_{[\alpha]}(t)$.


\subsection{Choice of representatives for \texorpdfstring{$X_{[\alpha]}(t)$}{X\_[\textbackslash alpha](t)}}

The aim of this subsection is to strategically select representatives in $\SL_{2\ell}(R)$ for the elements $X_{[\alpha]}(t)$ so that the projective identities utilized in subsequent subsections lift to exact relations.

Recall that $\varphi_5 = i_{[C_2]}^{-1} \circ \varphi_1$, with $C_2 \in \GL_{2\ell}(R,J)$. By applying the remark following Lemma~\ref{lemma:compatible-representatives} (with $C = C_2$) to the elements $x_{[\alpha]}(t) \in E'_{\pi_0,\sigma}(\Phi,R)$ for $[\alpha] \in \Phi_\rho$ and $t \in R_{[\alpha]}$, we obtain a preliminary representative $x'_{[\alpha]}(t) \in \SL_{2\ell}(R)$ for $X_{[\alpha]}(t)$ satisfying
\[
    x'_{[\alpha]}(t) \equiv x_{[\alpha]}\bigl(f'(t)\bigr) \pmod{J},
\]
where $f'(t) \in R_{[\alpha]}$ is an arbitrary lift of $f(t+J) \in k_{[\alpha]}$.

First, consider the simple root $[\alpha_1]$, which is of type $A_1$. We focus on the preliminary representative $x'_{[\alpha_1]}(t) \in \SL_{2\ell}(R)$ chosen above. The standard relation
\[
    h_{[\alpha_2]}(-1) \, x_{[\alpha_1]}(t) \, h_{[\alpha_2]}(-1)^{-1} = x_{[\alpha_1]}(t)^{-1}
\]
induces a corresponding projective relation for $X_{[\alpha_1]}(t)$. Consequently, there exists a scalar $\lambda \in R^\times$ such that our chosen representative satisfies
\[
    h_{[\alpha_2]}(-1) \, x'_{[\alpha_1]}(t) \, h_{[\alpha_2]}(-1)^{-1} = \lambda \, x'_{[\alpha_1]}(t)^{-1}
\]
in $\SL_{2\ell}(R)$. Taking this identity modulo $J$, we deduce that $\lambda \equiv 1 \pmod{J}$. Taking the determinant of both sides yields $\lambda^{2\ell} = 1$.

As established in Section~\ref{sec:image_of_h(-1)}, since $2 \in R^\times$ and $R$ is a local ring, any $2$-power root of unity congruent to $1$ modulo $J$ must be trivial. Thus, the $2$-primary component of the order of $\lambda$ is trivial, meaning $\lambda$ has odd order. Consequently, the squaring map induces an automorphism on the cyclic group $\langle\lambda\rangle$, guaranteeing the existence of an element $\eta \in \langle\lambda\rangle$ such that $\eta^2 = \lambda^{-1}$.

By replacing the preliminary representative $x'_{[\alpha_1]}(t)$ with the rescaled element $\eta \, x'_{[\alpha_1]}(t)$, we obtain the exact relation
\[
    h_{[\alpha_2]}(-1) \, x'_{[\alpha_1]}(t) \, h_{[\alpha_2]}(-1)^{-1} = x'_{[\alpha_1]}(t)^{-1}.
\]
Because $\eta^{2\ell} = 1$, this modified representative remains in $\SL_{2\ell}(R)$, and since $\eta \equiv 1 \pmod{J}$, the original congruence condition modulo $J$ is preserved.

Next, let $[\beta] \in \Phi_\rho$ be an arbitrary root of type $A_1$. There exists a product of normalized Weyl elements, say $w$, such that
\[
    x_{[\beta]}(t) = w \, x_{[\alpha_1]}(t)^{\pm 1} \, w^{-1}.
\]
Accordingly, now that $x'_{[\alpha_1]}(t)$ has been scalar-normalized, we define the representatives for all other roots of type $A_1$ via Weyl conjugation:
\[
    x'_{[\beta]}(t) \coloneqq w \, x'_{[\alpha_1]}(t)^{\pm 1} \, w^{-1}.
\]

We now turn to the roots of type $A_1^2$, proceeding exactly as we did for type $A_1$. Consider the simple root $[\alpha_{\ell-1}]$. We first select a preliminary representative $x'_{[\alpha_{\ell-1}]}(t) \in \SL_{2\ell}(R)$ as before. Utilizing the standard relation
\[
    h_{[\alpha_{\ell-2}]}(-1) \, x_{[\alpha_{\ell-1}]}(t) \, h_{[\alpha_{\ell-2}]}(-1)^{-1} = x_{[\alpha_{\ell-1}]}(t)^{-1},
\]
we rescale this preliminary matrix exactly as before to obtain a precisely normalized representative $x'_{[\alpha_{\ell-1}]}(t) \in \SL_{2\ell}(R)$ satisfying
\[
    h_{[\alpha_{\ell-2}]}(-1) \, x'_{[\alpha_{\ell-1}]}(t) \, h_{[\alpha_{\ell-2}]}(-1)^{-1} = x'_{[\alpha_{\ell-1}]}(t)^{-1}.
\]
The representatives $x'_{[\beta]}(t)$ for all remaining roots $[\beta] \sim A_1^2$ are subsequently generated from this normalized element $x'_{[\alpha_{\ell-1}]}(t)$ via Weyl conjugation. 

\medskip

We now proceed to normalize the elements $x'_{[\alpha]}(t)$ for $[\alpha] \in \Phi_\rho$ and $t \in R_{[\alpha]}$. Since any two roots of the same length are conjugate under the action of the Weyl group, and the Weyl elements have already been normalized, it suffices to verify that the representatives $x'_{[\alpha]}(t)$ attain the desired normalized form for one representative of each root length. Accordingly, it is enough to check the normalization for $x'_{[\alpha_1]}(t)$ and $x'_{[\alpha_{\ell-1}]}(t)$.

To simplify the notation, we write
\[
    x_i(t) \coloneqq x'_{[\alpha_i]}(t)
\]
for all simple roots $[\alpha_i] \in \Delta_\rho$.

Let $\{ e_1, \dots, e_{2\ell} \}$ denote the standard basis of $R^{2\ell}$. Recall that we defined the submodules
\[
    B_r \coloneqq \langle e_r, e_{\ell+r} \rangle \qquad (1 \leq r \leq \ell),
\]
which provide the standard decomposition
\[
    R^{2\ell} = B_1 \oplus \dots \oplus B_\ell.
\]


\subsection{Normalization of \texorpdfstring{$x_{1}(t)$}{x1(t)}}

We assert that, with respect to the standard decomposition 
\[
    R^{2\ell} = B_1 \oplus \dots \oplus B_\ell,
\]
the element $x_1(t)$ assumes the following block-diagonal configuration:
\[
    x_1(t) = \begin{pmatrix} 
        A & B & 0 & \dots & 0 \\ 
        C & D & 0 & \dots & 0 \\ 
        0 & 0 & u_1 & \dots & 0 \\ 
        \vdots & \vdots & \vdots & \ddots & \vdots \\ 
        0 & 0 & 0 & \dots & u_{\ell-2} 
    \end{pmatrix},
\]
where $A, B, C, D$, and $u_i$ (for $1 \leq i \leq \ell-2$) are $2 \times 2$ matrices satisfying
\[
    A, D, u_i \equiv I_2 \pmod{J}, 
    \qquad 
    B \equiv \begin{pmatrix} 
        f'(t) & 0 \\ 
        0 & 0 
    \end{pmatrix} \pmod{J}, 
    \qquad 
    C \equiv \begin{pmatrix} 
        0 & 0 \\ 
        0 & -f'(t) 
    \end{pmatrix} \pmod{J}.
\]
Here $f'(t) \in R_{\theta}$ denotes a lift of the element $f(t+J) \in k_{\theta}$, where $f \colon k \to k$ is the residue-field automorphism induced by the map $\varphi_5$.

It remains to justify the preceding block decomposition. 
If $\ell \geq 6$, this follows from the fact that $x_{[\alpha_1]}(t)$ commutes with $h_{[\alpha_1]}(-1)$ and with $h_{[\alpha_i]}(-1)$ for each $i=3,\dots,\ell-2$. 
These commuting relations are preserved projectively for their images under $\varphi_5$, and by the standard scalar-normalization argument, they lift to exact equalities in $\GL_{2\ell}(R)$.

If $\ell=5$, the same conclusion follows from the fact that $x_{[\alpha_1]}(t)$ commutes with both $h_{[\alpha_1]}(-1)$ and $h_{[\alpha_3]}(-1)$, alongside commuting with $x_{\pm [\alpha_3]}(1)$ and $x_{\pm [\alpha_4]}(1)$. These relations projectively transfer to their $\varphi_5$-images; applying either scalar-normalization or direct entry-by-entry comparison confirms that they lift to exact identities in $\GL_{2\ell}(R)$.

Finally, if $\ell=4$, the required block form is deduced from the fact that $x_{[\alpha_1]}(t)$ commutes with $h_{[\alpha_1]}(-1)$ and simultaneously commutes with $x_{\pm [\alpha_3]}(1)$. 

From the standard relation
\[
    h_{[\alpha_2]}(-1)\, x_{[\alpha_1]}(t)\, h_{[\alpha_2]}(-1)^{-1}
    =
    x_{[\alpha_1]}(-t),
\]
we obtain the corresponding projective inverse relation for $X_{[\alpha_1]}(t)$. By our choice of the representative $x_1(t)$, this relation lifts to an exact identity in $\GL_{2\ell}(R)$, namely
\[
    h_{[\alpha_2]}(-1)\, x_1(t) \, h_{[\alpha_2]}(-1)^{-1}
    =
    x_1(t)^{-1}.
\]
Examining the action on the blocks $B_3, \dots, B_\ell$, and recalling that $h_{[\alpha_2]}(-1)$ acts as $\pm I_2$ on these subspaces, it follows that
\[
    u_i = u_i^{-1}, \qquad \text{i.e.,} \qquad u_i^2 = I_2.
\]
Since $u_i \equiv I_2 \pmod{J}$, we conclude that
\[
    u_i = I_2 \qquad (i=1,\dots,\ell-2).
\]

Thus, if we define the unified block $B' = B_3 \oplus \cdots \oplus B_\ell$, we can rewrite $x_1(t)$ relative to the coarser decomposition $R^{2\ell} = B_1 \oplus B_2 \oplus B'$ in the simplified form
\[
    x_1(t) = \begin{pmatrix} 
        A & B & 0 \\ 
        C & D & 0 \\ 
        0 & 0 & I 
    \end{pmatrix}, 
    \qquad \text{where } I = I_{2\ell-4}.
\]

Furthermore, $x_{[\alpha_1]}(t)$ is known to commute with $x_{[\alpha_1]}(1)$, $x_{[\alpha_1]+[\alpha_2]}(1)$, and $x_{-[\alpha_2]}(1)$. Thus, its image projectively commutes with the corresponding elements. For our chosen lift $x_1(t)$, this implies commutativity up to a scalar multiple. By comparing the identity block components (as done previously in Section~\ref{sec:image_of_x(1)}), we see that this scalar must be trivial. As a result, $x_1(t)$ exactly satisfies:
\begin{gather*}
    x_1(t)\, x_{[\alpha_1]}(1) = x_{[\alpha_1]}(1)\, x_1(t), \\ 
    x_1(t)\, x_{[\alpha_1]+[\alpha_2]}(1) = x_{[\alpha_1]+[\alpha_2]}(1)\, x_1(t), \\
    x_1(t)\, x_{-[\alpha_2]}(1) = x_{-[\alpha_2]}(1)\, x_1(t).
\end{gather*}
A straightforward matrix calculation using these constraints dictates that $x_1(t)$ must have the form
\[
    x_1(t) =
    \begin{pmatrix}
        1 & a & b & 0 & 0 \\
        0 & 1 & 0 & 0 & 0  \\
        0 & 0 & 1 & 0 & 0 \\
        0 & c & d & 1 & 0 \\
        0 & 0 & 0 & 0 & I
    \end{pmatrix},
    \qquad \text{with } I=I_{2\ell-4},
\]
for some scalars $a, b, c, d \in R$. 

Returning to the relation
\[
    h_{[\alpha_2]}(-1)\, x_1(t)\, h_{[\alpha_2]}(-1)^{-1} = x_1(t)^{-1},
\]
we find that it can be rewritten as
\[
    x_1(t) \, h_{[\alpha_2]}(-1) \, x_1(t) = h_{[\alpha_2]}(-1).
\]
Evaluating this matrix identity immediately forces
\[
    a = 0 \quad \text{and} \quad d = 0.
\]

Because $\varphi_5$ acts as an automorphism, the element $X_{[\alpha_1]}(t)$ belongs to $E'_{\ad,\sigma}(\Phi, R)$. This dictates that our representative $X := x_1(t)$ must satisfies
\[
    X^{t} \, S \, X = \lambda_1 \, S
    \qquad \text{and} \qquad
    X = \lambda_2 \, Q \overline{X} Q^{-1},
\]
for some $\lambda_1, \lambda_2 \in R^{\times}$.
This further implies 
\[
    \lambda_1 = \lambda_2 = 1, \qquad
    c = -b, 
    \qquad
    b = \overline{b}.
\]

Defining $\eta(t) \coloneqq b \in R_{\theta}$, the matrix $x_1(t)$ simplifies to its standard canonical form:
\[
    \varphi_5\bigl(x_{[\alpha_1]}(t)\bigr) = x_1(t) = x_{[\alpha_1]}\bigl(\eta(t)\bigr) =
    \begin{pmatrix}
        1 & 0 & \eta(t) & 0 & 0 \\
        0 & 1 & 0 & 0 & 0 \\
        0 & 0 & 1 & 0 & 0 \\
        0 & -\eta(t) & 0 & 1 & 0 \\
        0 & 0 & 0 & 0 & I_{2\ell-4}
    \end{pmatrix}.
\]

Finally, every root element $x_{[\alpha]}(t)$ with $[\alpha]\in\Phi_\rho$ of type $A_1$ is obtained from $x_{[\alpha_1]}(\pm t)$ by conjugation with a product of normalized Weyl elements. Since these Weyl elements are fixed by $\varphi_5$ and the other representatives were defined by exact Weyl-conjugation identities, it follows that
\[
    \varphi_5\bigl(x_{[\alpha]}(t)\bigr) = x_{[\alpha]}(\eta(t))
\]
for every $t\in R_{\theta}$ and every root $[\alpha] \in \Phi_\rho$ of type $A_1$.


\subsection{Normalization of \texorpdfstring{$x_{\ell-1}(t)$}{xl-1(t)}}

With respect to the standard decomposition 
\[
    R^{2\ell} = B_1 \oplus \cdots \oplus B_{\ell-2} \oplus (B_{\ell-1} \oplus B_\ell),
\]
we assert that the element $x_{\ell-1}(t)$ assumes the block-diagonal form
\[
    x_{\ell-1}(t) = \begin{pmatrix} 
        u_1 & \cdots & 0 & 0 \\ 
        \vdots & \ddots & \vdots & \vdots \\ 
        0 & \cdots & u_{\ell-2} & 0 \\ 
        0 & \cdots & 0 & A
    \end{pmatrix},
\]
where $A \in M_4(R)$ and $u_i \in M_2(R)$ for $1 \leq i \leq \ell-2$, subject to the congruences
\[
    u_i \equiv I_2 \pmod{J}, 
    \qquad 
    A \equiv \begin{pmatrix} 
        1 & -f'(t) \overline{f'(t)} & f'(t) & \overline{f'(t)} \\ 
        0 & 1 & 0 & 0 \\
        0 & -\overline{f'(t)} & 1 & 0 \\
        0 & -f'(t) & 0 & 1
    \end{pmatrix} \pmod{J}.
\]
Here, $f'(t) \in R$ is a lift of $f(t+J) \in k$, where $f \colon k \to k$ denotes the residue-field automorphism induced by the map $\varphi_5$.

We now justify this block decomposition. 
For $\ell \geq 5$, we utilize the fact that $x_{[\alpha_{\ell-1}]}(t)$ commutes with $h_{[\alpha_i]}(-1)$ for all $1 \leq i \leq \ell-3$. 
These commuting relations are preserved projectively by their images under $\varphi_5$. By the standard scalar-normalization argument, they lift to exact identities in $\GL_{2\ell}(R)$.

For $\ell = 4$, an analogous conclusion holds because $x_{[\alpha_{3}]}(t)$ commutes with $h_{[\alpha_1]}(-1)$, as well as with $x_{\pm [\alpha_1]}(1)$ and $x_{\pm [\widetilde{\alpha}_1]}(1)$. These relations transfer projectively to their $\varphi_5$-images. Applying either scalar normalization or a direct entry-by-entry comparison confirms that they also lift to exact identities in $\GL_{2\ell}(R)$.

Consider the standard relation
\[
    h_{[\alpha_{\ell-2}]}(-1)\, x_{[\alpha_{\ell-1}]}(t)\, h_{[\alpha_{\ell-2}]}(-1)^{-1}
    =
    x_{[\alpha_{\ell-1}]}(-t).
\]
This yields a corresponding projective inverse relation for $X_{[\alpha_{\ell-1}]}(t)$. By our choice of the representative $x_{\ell-1}(t)$, this relation lifts to an exact identity in $\GL_{2\ell}(R)$, namely
\[
    h_{[\alpha_{\ell-2}]}(-1)\, x_{\ell-1}(t) \, h_{[\alpha_{\ell-2}]}(-1)^{-1}
    =
    x_{\ell-1}(t)^{-1}.
\]
Examining the action on the blocks $B_1, \dots, B_{\ell-2}$ and recalling that $h_{[\alpha_{\ell-2}]}(-1)$ acts as $\pm I_2$ on these subspaces, we find that
\[
    u_i = u_i^{-1}, \qquad \text{or equivalently,} \qquad u_i^2 = I_2.
\]
Since $u_i \equiv I_2 \pmod{J}$, we conclude that
\[
    u_i = I_2 \qquad \text{for all } 1 \leq i \leq \ell-2.
\]

Defining the unified blocks $B' = B_1 \oplus \cdots \oplus B_{\ell-2}$ and $B'' = B_{\ell-1} \oplus B_{\ell}$, we can express $x_{\ell-1}(t)$ with respect to the coarser decomposition $R^{2\ell} = B' \oplus B''$ in the simplified form
\[
    x_{\ell-1}(t) = \begin{pmatrix} 
        I_{2\ell-4} & 0 \\ 
        0 & A 
    \end{pmatrix}.
\]

Furthermore, $x_{[\alpha_{\ell-1}]}(t)$ commutes with $x_{[\alpha_{\ell-1}]}(1)$, $x_{2[\alpha_{\ell-1}]+[\alpha_{\ell-2}]}(1)$, and $x_{-[\alpha_{\ell-2}]}(1)$. 
Consequently, its image projectively commutes with the corresponding images. For our chosen lift $x_{\ell-1}(t)$, this implies commutativity up to a central scalar. 
Comparing the identity block components (as established in Section~\ref{sec:image_of_x(1)}), we see that this scalar must be trivial. As a result, $x_{\ell-1}(t)$ satisfies the exact identities:
\begin{gather*}
    x_{\ell-1}(t)\, x_{[\alpha_{\ell-1}]}(1) = x_{[\alpha_{\ell-1}]}(1)\, x_{\ell-1}(t), \\ 
    x_{\ell-1}(t)\, x_{2[\alpha_{\ell-1}]+[\alpha_{\ell-2}]}(1) = x_{2[\alpha_{\ell-1}]+[\alpha_{\ell-2}]}(1)\, x_{\ell-1}(t), \\
    x_{\ell-1}(t)\, x_{-[\alpha_{\ell-2}]}(1) = x_{-[\alpha_{\ell-2}]}(1)\, x_{\ell-1}(t).
\end{gather*}
A direct matrix calculation utilizing these constraints forces $x_{\ell-1}(t)$ to take the form
\[
    x_{\ell-1}(t) =
    \begin{pmatrix}
        I_{2\ell-4} & 0 & 0 & 0 & 0 \\
        0 & 1 & a & b & c  \\
        0 & 0 & 1 & 0 & 0 \\
        0 & 0 & d & e & 1-e \\
        0 & 0 & -b-c-d & 1-e & e
    \end{pmatrix},
\]
for some scalars $a,b,c,d,e\in R$ satisfying
\begin{align*}
    a & \equiv -f'(t) \overline{f'(t)} \pmod{J}, & b & \equiv f'(t) \pmod{J}, \\
    c & \equiv \overline{f'(t)} \pmod{J}, & d & \equiv -\overline{f'(t)} \pmod{J}, & e & \equiv 1 \pmod{J}.
\end{align*}

Returning to the relation 
\[
    h_{[\alpha_{\ell-2}]}(-1) \, x_{\ell-1}(t) \, h_{[\alpha_{\ell-2}]}(-1)^{-1} = x_{\ell-1}(t)^{-1},
\]
we rewrite it as
\[
    x_{\ell-1}(t) \, h_{[\alpha_{\ell-2}]}(-1) \, x_{\ell-1}(t) = h_{[\alpha_{\ell-2}]}(-1).
\]
Comparing the corresponding entries in the preceding matrix identity and using $2 \in R^\times$, we obtain
\[
    e (e-1) = 0 
    \qquad \text{and} \qquad
    a = \frac{bd-bc-cd-c^2}{2}.
\]
Since $e \equiv 1 \pmod{J}$, the element $e$ is invertible. Therefore $e(e-1)=0$ implies $e=1$.

Because $\varphi_5$ is an automorphism, the element $X_{[\alpha_{\ell-1}]}(t)$ belongs to $E'_{\mathrm{ad},\sigma}(\Phi,R)$. 
Therefore its representative $X:= x_{\ell-1}(t)$ satisfies
\[
    X^t \, S \, X = \lambda_1 \, S
    \qquad \text{and} \qquad
    X = \lambda_2 \, Q \overline{X} Q^{-1}
\]
for some $\lambda_1, \lambda_2 \in R^\times$.
Since $X$ acts as the identity on $B' = B_1 \oplus \cdots \oplus B_{\ell-2}$, comparison on this direct summand gives $\lambda_1 = \lambda_2 = 1$.

Restricting the two identities to $B'' = B_{\ell-1} \oplus B_\ell$, we obtain
\[
    c = -d, \qquad a = bd, \qquad b = \overline{c}.
\]
Consequently, 
\[
    c = \overline{b}, \qquad d = -\overline{b}, \qquad a = -b \overline{b}.
\]

Define $\mu(t):=b$.
Then $\mu(t) \equiv f'(t)\pmod{J}$, and the matrix $x_{\ell-1}(t)$ takes the standard form
\[
    \varphi_5\bigl(x_{[\alpha_{\ell-1}]}(t)\bigr) 
    = x_{\ell-1}(t)
    = x_{[\alpha_{\ell-1}]}\bigl(\mu(t)\bigr) 
    = \begin{pmatrix}
        I_{2\ell-4} & 0 & 0 & 0 & 0 \\
        0 & 1 & -\mu(t)\overline{\mu(t)} & \mu(t) & \overline{\mu(t)} \\
        0 & 0 & 1 & 0 & 0 \\
        0 & 0 & -\overline{\mu(t)} & 1 & 0 \\
        0 & 0 & -\mu(t) & 0 & 1
    \end{pmatrix}.
\]

Finally, every root element $x_{[\alpha]}(t)$ for $[\alpha] \in \Phi_\rho$ of type $A_1^2$ is obtained from $x_{[\alpha_{\ell-1}]}(\pm t)$ via conjugation by a product of normalized Weyl elements. Since these Weyl elements are fixed by $\varphi_5$ and the remaining representatives are defined via exact Weyl-conjugation identities, it follows that
\[
    \varphi_5\bigl(x_{[\alpha]}(t)\bigr) = x_{[\alpha]}\bigl(\mu(t)\bigr)
\]
for every $t \in R$ and every root $[\alpha] \in \Phi_\rho$ of type $A_1^2$.


\subsection{\texorpdfstring{$\eta$}{eta} and \texorpdfstring{$\mu$}{mu} as ring automorphisms}

In this subsection, we establish that the maps 
\[
    \eta \colon R_{\theta} \to R_{\theta}
    \qquad \text{and} \qquad
    \mu \colon R \to R
\]
are ring automorphisms. Furthermore, we show that the restriction of $\mu$ to $R_\theta$ coincides with $\eta$ and that $\mu$ commutes with the involution
$\theta$, that is, $\mu \circ \theta = \theta \circ \mu$.

\medskip

We begin by proving that $\eta$ is a ring automorphism. 
Let $[\alpha] \in \Phi_\rho$ be a root of type $A_1$, and let $t_1, t_2 \in R_\theta$. The standard root group relation gives
\[
    x_{[\alpha]}(t_1+t_2) = x_{[\alpha]}(t_1)\,x_{[\alpha]}(t_2).
\]
Applying $\varphi_5$ to this identity yields
\[
    x_{[\alpha]}\bigl(\eta(t_1+t_2)\bigr) = x_{[\alpha]}\bigl(\eta(t_1)\bigr)\, x_{[\alpha]}\bigl(\eta(t_2)\bigr) = x_{[\alpha]}\bigl(\eta(t_1) + \eta(t_2)\bigr).
\]
Comparing the corresponding parameters, we obtain
\[
    \eta(t_1+t_2) = \eta(t_1)+\eta(t_2)
\]
for all $t_1, t_2 \in R_\theta$, demonstrating that $\eta$ is additive. 

Next, since $[\alpha_1]$, $[\alpha_2]$, and $[\alpha_1]+[\alpha_2]$ are all roots of type $A_1$, the Chevalley commutator relation asserts that
\[
    [x_{[\alpha_1]}(t_1), x_{[\alpha_2]}(t_2)]
    =
    x_{[\alpha_1]+[\alpha_2]}(t_1 t_2)
\]
for all $t_1,t_2 \in R_{\theta}$. Applying $\varphi_5$ to this commutator, we find
\[
    x_{[\alpha_1]+[\alpha_2]}\bigl(\eta(t_1 t_2)\bigr)
    =
    [x_{[\alpha_1]}\bigl(\eta(t_1)\bigr),
    x_{[\alpha_2]}\bigl(\eta(t_2)\bigr)]
    =
    x_{[\alpha_1]+[\alpha_2]}\bigl(\eta(t_1)\eta(t_2)\bigr).
\]
Consequently,
\[
    \eta(t_1 t_2) = \eta(t_1)\eta(t_2)
\]
for all $t_1,t_2 \in R_\theta$, proving that $\eta$ is multiplicative.

Recall that $\varphi_5$ fixes $x_{[\alpha_1]}(1)$, which implies $\eta(1)=1$; thus, $\eta \colon R_\theta \to R_\theta$ is a unital ring homomorphism. To confirm that $\eta$ is bijective, we apply the preceding argument to the inverse automorphism $\varphi_5^{-1}$. Because $\varphi_5^{-1}$ also fixes all normalized elements, we obtain a unital ring homomorphism $\eta'\colon R_\theta\to R_\theta$ such that
\[
    \varphi_5^{-1}\bigl(x_{[\alpha]}(t)\bigr) = x_{[\alpha]}\bigl(\eta'(t)\bigr)
\]
for every root $[\alpha]$ of type $A_1$ and every $t \in R_{\theta}$.
Therefore,
\[
    x_{[\alpha]}(t) = \varphi_5^{-1} \Bigl(\varphi_5\bigl(x_{[\alpha]}(t)\bigr)\Bigr) = \varphi_5^{-1} \bigl(x_{[\alpha]}(\eta(t))\bigr) = x_{[\alpha]}\bigl(\eta'(\eta(t))\bigr).
\]
Comparing the parameters yields $\eta'(\eta(t))=t$.
Symmetrically, $\eta(\eta'(t))=t$.
Thus, $\eta'=\eta^{-1}$, confirming that $\eta$ is indeed a ring automorphism.

\medskip

Next, we prove that $\mu$ is a ring automorphism. 
Let $[\alpha] \in \Phi_\rho$ be a root of type $A_1^2$, and let $t_1,t_2 \in R$. The additive relation
\[
    x_{[\alpha]}(t_1+t_2) = x_{[\alpha]}(t_1)\,x_{[\alpha]}(t_2)
\]
implies, upon applying $\varphi_5$, that
\[
    x_{[\alpha]}\bigl(\mu(t_1+t_2)\bigr) = x_{[\alpha]}\bigl(\mu(t_1)\bigr)\, x_{[\alpha]}\bigl(\mu(t_2)\bigr) = x_{[\alpha]}\bigl(\mu(t_1) + \mu(t_2)\bigr).
\]
Comparing the parameters, we see that
\[
    \mu(t_1+t_2) = \mu(t_1)+\mu(t_2)
\]
for all $t_1, t_2 \in R$, proving that $\mu$ is additive. 

Now, let $\Phi'$ be the root subsystem generated by $[\alpha] \coloneqq [\alpha_{\ell-2}]$ and $[\beta] \coloneqq [\alpha_{\ell-1}]$. Then $\Phi'$ is a root system of type $B_2$:
\begin{center}
    \begin{tikzpicture}[scale=1.5, >=stealth, thick]
        \draw[->, blue!70!black] (0,0) -- (1,0) node[right, xshift=2pt] {$[\beta]$};
        \draw[->, blue!70!black] (0,0) -- (0,1) node[above, yshift=2pt] {$[\alpha] + [\beta]$};
        \draw[->, blue!70!black] (0,0) -- (-1,0) node[left, xshift=-2pt] {$-[\beta]$};
        \draw[->, blue!70!black] (0,0) -- (0,-1) node[below, yshift=-2pt] {$-[\alpha]-[\beta]$};
    
        \draw[->, red!70!black] (0,0) -- (1,1) node[above right] {$[\alpha] + 2[\beta]$};
        \draw[->, red!70!black] (0,0) -- (-1,1) node[above left] {$[\alpha]$};
        \draw[->, red!70!black] (0,0) -- (-1,-1) node[below left] {$-[\alpha] - 2[\beta]$};
        \draw[->, red!70!black] (0,0) -- (1,-1) node[below right] {$-[\alpha]$};
    \end{tikzpicture}
\end{center}

Consider the Chevalley commutator relation
\[
    [x_{[\alpha]}(u), x_{[\beta]}(v)]
    =
    x_{[\alpha]+[\beta]}(uv) \,
    x_{[\alpha]+2[\beta]}(uv\bar{v})
\]
for all $u \in R_{\theta}$ and $v \in R$.
Applying $\varphi_5$ to this relation, we obtain
\[
    x_{[\alpha]+[\beta]}\bigl(\eta(u) \, \mu(v)\bigr) \,
    x_{[\alpha]+2[\beta]}
    \bigl(\eta(u) \, \mu(v) \, \overline{\mu(v)}\bigr)
    =
    x_{[\alpha]+[\beta]}\bigl(\mu(uv)\bigr) \,
    x_{[\alpha]+2[\beta]}\bigl(\eta(uv\bar{v})\bigr).
\]
Comparing the arguments of the $x_{[\alpha]+[\beta]}$ terms, we deduce that
\begin{equation*}
    \mu(uv) = \eta(u) \, \mu(v)
\end{equation*}
for all $u \in R_{\theta}$ and $v \in R$.
Since $\varphi_5$ fixes $x_{[\beta]}(1)$, we have $\mu(1)=1$.
Setting $v=1$ therefore yields $\mu(u) = \eta(u)$ for all $u \in R_{\theta}$, meaning that $\mu|_{R_{\theta}} = \eta$.
Consequently, the preceding equation can be rewritten as
\begin{equation}\label{eq:a}
    \mu(uv) = \mu(u) \, \mu(v)
\end{equation}
for all $u \in R_{\theta}$ and $v \in R$.

Observe that because $\varphi_5$ is an automorphism, the map $\mu$ must be bijective. Indeed, since $\varphi_5^{-1}(x_{[\alpha]}(1)) = x_{[\alpha]}(1)$ for all $[\alpha] \in \Phi_\rho$, applying the same logic to the inverse automorphism $\varphi_5^{-1}$ yields a map $\mu' \colon R \to R$ such that
\[
    \varphi_5^{-1}\bigl(x_{[\alpha]}(t)\bigr)
    =
    x_{[\alpha]}\bigl(\mu'(t)\bigr)
\]
for every root $[\alpha]$ of type $A_1^2$ and every $t \in R$.
This implies
\[
    x_{[\alpha]}(t)
    =
    \varphi_5^{-1}\Bigl(\varphi_5\bigl(x_{[\alpha]}(t)\bigr)\Bigr)
    =
    \varphi_5^{-1}\bigl(x_{[\alpha]}(\mu(t))\bigr)
    =
    x_{[\alpha]}\bigl(\mu'(\mu(t))\bigr).
\]
Comparing parameters yields $\mu'(\mu(t))=t$.
Symmetrically, $\mu(\mu'(t))=t$.

Next, we utilize the Chevalley commutator relation
\[
    [x_{[\alpha]+[\beta]}(u), x_{-[\beta]}(v)]
    =
    x_{[\alpha]}(uv+\bar{u}\bar{v})
\]
for all $u,v \in R$.
Applying $\varphi_5$ to this relation yields
\[
    x_{[\alpha]}\bigl(\eta(uv+\bar{u}\bar{v})\bigr)
    =
    x_{[\alpha]}
    \bigl(
        \mu(u)\mu(v)
        +
        \overline{\mu(u)}\,\overline{\mu(v)}
    \bigr).
\]
Equating the arguments, we obtain
\begin{equation*}
    \eta(uv+\bar{u}\bar{v})
    =
    \mu(u)\mu(v)
    +
    \overline{\mu(u)}\,\overline{\mu(v)}
\end{equation*}
for all $u,v \in R$.
Since $uv + \bar{u}\bar{v} \in R_{\theta}$ and $\mu|_{R_{\theta}} = \eta$, the above relation can be rewritten as
\begin{equation}\label{eq:m}
    \mu(uv+\bar{u}\bar{v})
    =
    \mu(u)\mu(v)
    +
    \overline{\mu(u)}\,\overline{\mu(v)}
\end{equation}
for all $u,v \in R$.

Setting $v=1$ in \eqref{eq:m} and using $\mu(1)=1$, we obtain
\[
    \mu(u + \bar{u}) = \mu(u)+\overline{\mu(u)}.
\]
Since $\mu (u + \bar{u}) = \mu(u) + \mu(\bar{u})$, we have
\[
    \mu(\bar{u})=\overline{\mu(u)}
\]
for every $u\in R$. Thus,
\[
    \mu\circ\theta=\theta\circ\mu.
\]
Because $\mu$ is bijective, it follows in particular that
\[
    \mu(R_{\theta})=R_{\theta}
    \qquad\text{and}\qquad
    \mu(R_{\theta}^{-})=R_{\theta}^{-}.
\]

For $u, v \in R_{\theta}^{-}$, the equation~\eqref{eq:m} forces the relation
\begin{equation}\label{eq:b}
    \mu (uv) = \mu(u) \mu(v).
\end{equation}

Finally, we show that $\mu$ is multiplicative on all of $R$. Let $t_1, t_2 \in R$. We can decompose these elements as
\[
    t_1 = u_1 + v_1, \qquad
    t_2 = u_2 + v_2,
\]
where $u_i = \frac{1}{2}(t_i + \overline{t}_i) \in R_{\theta}$ and $v_i = \frac{1}{2}(t_i - \overline{t}_i) \in R_{\theta}^{-}$ for $i = 1,2$.
Using the additivity of $\mu$ along with \eqref{eq:a} and \eqref{eq:b}, we compute
\begin{align*}
    \mu (t_1 t_2) &= \mu\bigl((u_1 + v_1)(u_2 + v_2)\bigr) \\
    &= \mu (u_1 u_2 + u_1 v_2 + v_1 u_2 + v_1 v_2) \\
    &= \mu(u_1 u_2) + \mu(u_1 v_2) + \mu(v_1 u_2) + \mu(v_1 v_2) \\
    &= \mu(u_1) \mu(u_2) + \mu(u_1) \mu(v_2) + \mu(v_1) \mu(u_2) + \mu(v_1) \mu(v_2) \\
    &= \bigl(\mu(u_1) + \mu(v_1)\bigr) \bigl(\mu(u_2) + \mu(v_2)\bigr) \\
    &= \mu (u_1 + v_1) \, \mu (u_2 + v_2) \\
    &= \mu (t_1) \, \mu(t_2).
\end{align*}

We conclude that $\mu:R\to R$ is a bijective ring homomorphism, and hence a ring automorphism. 
Moreover, as proved above, $\mu \circ \theta = \theta \circ \mu$, and $\mu|_{R_\theta} = \eta$.


\section*{Acknowledgements}

This paper is dedicated to the memory of our friend and colleague Eugene Plotkin. 
The authors are grateful to Boris Kunyavskii, Shripad M. Garge, Dipendra Prasad, and Pavel Gvozdevsky for helpful discussions and valuable comments.




\begin{thebibliography}{AAAA}
    \bibitem{EA0} Eiichi Abe, \emph{Automorphisms of {C}hevalley groups over commutative rings}, St. Petersburg Math. J. (2), Vol. 5 (1994), 287--300; translated from Algebra i Analiz (2), Vol. 5 (1993), 74--90.
    
    \bibitem{EA1} Eiichi Abe, \emph{Coverings of twisted Chevalley groups over commutative rings}, Sci. Rep. Tokyo Kyoiku Daigaku Sect. A, Vol. 13 (1977), 194-218.
    
    \bibitem{EA2} Eiichi Abe, \emph{Chevalley groups over local rings}, Tohoku Math. J. (2), Vol. 21 (1969), 474-494.
    
    \bibitem{EA3} Eiichi Abe, \emph{Chevalley groups over commutative rings}, Radical theory, Proc. 1988, Sendai Conf., Vol. 83 (1989), 1-23.
    
    \bibitem{EA4} Eiichi Abe, \emph{Chevalley groups over commutative rings. {N}ormal subgroups and automorphisms}, Second {I}nternational {C}onference on {A}lgebra ({B}arnaul, 1991), Contemp. Math., Vol. 184 (1995), 13--23.
    
    \bibitem{EA&JH} Eiichi Abe and James F. Hurley, \emph{Centers of Chevalley groups over commutative rings}, Comm. Algebra, Vol. 16 (1988), 57-74.
    
    \bibitem{EA&KS} Eiichi Abe and Kazuo Suzuki, \emph{On normal subgroups of Chevalley groups over commutative rings}, Tohoku Math. J. (2), Vol. 28 (1976), 185-198.
    
    \bibitem{AB1} Armand Borel, \emph{Linear algebraic groups}, Graduate Texts in Mathematics, Springer-Verlag, New York (1991).
    
    \bibitem{AB2} Armand Borel, \emph{Properties and linear representations of {C}hevalley groups}, Lecture Notes in Math., Vol. 131 (1970), 1--55. 

    \bibitem{AB&JT} Armand Borel and Jacques Tits, Homomorphismes ``abstraits'' de groupes alg\'ebriques simples, Ann. of Math. (2), Vol. 97 (1973), 499--571.

    \bibitem{NB} Nicolas Bourbaki, \emph{Commutative algebra. Chapters 1-7}, Elements of Mathematics, Springer-Verlag, Berlin, (1998). 
    
    \bibitem{EB24:final} Elena I. Bunina, \emph{Automorphisms of Chevalley groups over commutative rings}, Comm. Algebra, Vol. 52 (2024), 2313--2327.
    
    \bibitem{EB12:main} Elena I. Bunina, \emph{Automorphisms of Chevalley groups of different types over commutative rings}, J. Algebra, Vol. 355 (2012), 154--170.
    
    \bibitem{EB07:first} Elena I. Bunina, \emph{Automorphisms of Chevalley groups of certain types over local rings}, Uspekhi Mat. Nauk, Vol. 62 (2007), 143--144.
    
    \bibitem{EB08:b2andg2} Elena I. Bunina, \emph{Automorphisms of Chevalley groups of types $B_2$ and $G_2$ over local rings}, Journal of Mathematical Sciences, Vol. 155, No. 6 (2008), 795--814.
    
    \bibitem{EB09:alwithhalfele} Elena I. Bunina, \emph{Automorphisms of elementary adjoint Chevalley groups of types $A_l, D_l, E_l$ over local rings with $1/2$}, Algebra and Logic, Vol. 48, No. 4 (2009), 250--267.
    
    \bibitem{EB10:alwithhalf} Elena I. Bunina, \emph{Automorphisms of Chevalley groups of types $A_l, D_l,$ or $E_l$ over local rings with $1/2$}, Journal of Mathematical Sciences, Vol. 167, No. 6 (2010), 749--766.
    
    \bibitem{EB10:alwithouthalf} Elena I. Bunina, \emph{Automorphisms of Chevalley groups of types $A_l, D_l, E_l$ over local rings without $1/2$}, Journal of Mathematical Sciences, Vol. 169, No. 5 (2010), 589--613.
    
    \bibitem{EB10:blwithhalf} Elena I. Bunina, \emph{Automorphisms of Chevalley groups of type $B_l$ over local rings with $1/2$}, Journal of Mathematical Sciences, Vol. 169, No. 5 (2010), 557--588.
    
    \bibitem{EB10:f4withhalf} Elena I. Bunina, \emph{Automorphisms of Chevalley groups of type $F_4$ over local rings with $1/2$}, Journal of Algebra, Vol. 323, No. 8 (2010), 2270--2289.

    \bibitem{EB&DM1} Elena I. Bunina and Deep H. Makadiya, \emph{Automorphisms of Twisted Chevalley Groups of type ${}^{2}A_\ell\ (\ell\geq 5)$ over Local Rings}, \href{https://doi.org/10.48550/arXiv.2607.22284}{arXiv:2607.22284} (2026).

    \bibitem{EB&DM2} Elena I. Bunina and Deep H. Makadiya, \emph{Automorphisms of Twisted Chevalley groups of types ${}^2A_3$ and ${}^2A_4$ over local rings with $1/2$}. Unpublished manuscript (2026). 

    \bibitem{EB&PV14:g2withouthlfnor} Elena I. Bunina and P. A. Verëvkin, \emph{Normalizers of Chevalley groups of type $G_2$ over local rings without $1/2$}, Journal of Mathematical Sciences, Vol. 201, No. 4 (2014), 446--449.
    
    \bibitem{EB&PV14:g2withouthalf} Elena I. Bunina and P. A. Verëvkin, \emph{Automorphisms of Chevalley groups of type $G_2$ over local rings without $1/2$}, Journal of Mathematical Sciences, Vol. 197, No. 4 (2014), 479--491. 
 
    \bibitem{EB&MV24:g2with1/3} Elena I. Bunina and M. A. Vladykina, \emph{Automorphisms of Chevalley groups of type $G_2$ over a commutative ring $R$ with $1/3$ generated by the invertible elements and $2R$}, Journal of Mathematical Sciences, Vol. 284, No. 4 (2024), 431--441.
    
    \bibitem{RC} Roger W. Carter, \emph{Simple Groups of Lie Type}, 2nd edition, Wiley, London (1989).

    \bibitem{RC&YC} Roger W. Carter and Yu Chen, \emph{Automorphisms of affine Kac-Moody groups and related Chevalley groups over rings}, J. Algebra, Vol. 155, No.~1 (1993), 44--94.

    \bibitem{YC1} Yu Chen, \emph{Isomorphic Chevalley groups over integral domains}, Rend. Sem. Mat. Univ. Padova, Vol. 92 (1994), 231--237. 
    
    \bibitem{YC2} Yu Chen, \emph{Automorphisms of simple Chevalley groups over $Q$-algebras}, Tohoku Math. J. (2), Vol. 47 (1995), No.~1, 81--97.

    \bibitem{YC3} Yu Chen, \emph{Isomorphisms of adjoint Chevalley groups over integral domains}, Trans. Amer. Math. Soc., Vol. 348 (1996), No~2, 521--541.

    \bibitem{YC4} Yu Chen, \emph{Isomorphisms of Chevalley groups over algebras}, J. Algebra, Vol. 226 (2000), No~2, 719--741.

    \bibitem{CC1} Claude Chevalley, \emph{Certains sch\'emas de groupes semi-simples}, in {\it S\'eminaire Bourbaki}, Vol.\ 6 (1995), Exp.\ No.\ 219, 219--234, Soc. Math. France, Paris.

    \bibitem{JD} Jean Diedonne, \emph{On the automorphisms of classical groups}, Mem. Amer. Math. Soc., No. 2 (1951).

    \bibitem{SG&DM1} Shripad M. Garge and Deep H. Makadiya, \emph{On Normal Subgroups of Twisted Chevalley Groups over Commutative Rings}, J. Algebra, Vol. 688 (2026), 587--659.
    
    \bibitem{SG&DM2} Shripad M. Garge and Deep H. Makadiya, \emph{Normalizer of Twisted Chevalley Groups over Commutative Rings}, J. Pure Appl. Algebra, Vol.  229, No. 11 (2025), Paper No. 108094, 14 pp. 
    
    \bibitem{SG&DM3} Shripad M. Garge and Deep H. Makadiya, \emph{Triangular and Unitriangular Factorization of Twisted Chevalley Groups}, Accepted in Israel Journal of Mathematics, \href{https://doi.org/10.48550/arXiv.2505.20224}{arXiv.2505.20224} (2025). 

    \bibitem{IG&AM1} Igor Z. Golubchik and A. V. Mikhal\"ev, \emph{Isomorphisms of the general linear group over an associative ring}, Vestnik Moskov. Univ. Ser. I Mat. Mekh., No.~3 (1983), 61--72.

    \bibitem{IG&AM2} Igor Z. Golubchik and A. V. Mikhal\"ev, \emph{Isomorphisms of unitary groups over associative rings}, J Math Sci, Vol.~30 (1985), 1863--1871.

    \bibitem{LH&IR} Luo Geng Hua and Irving Reiner, \emph{Automorphisms of the unimodular group}, Trans. Amer. Math. Soc., Vol. 71 (1951), 331--348.

    \bibitem{LH} Luo Geng Hua, \emph{On the automorphisms of the symplectic group over any field}, Ann. of Math. (2), Vol. 49 (1948), 739--759.

    \bibitem{JH0} James E. Humphreys, \emph{On the automorphisms of infinite Chevalley groups}, Canadian J. Math., Vol. 21 (1969), 908--911.
    
    \bibitem{JH} James E. Humphreys, \emph{Introduction to Lie Algebras and Representation Theory}, Graduate Texts in Mathematics, Springer-Verlag, New York-Berlin (1972).
    
    \bibitem{JH_LAG} James E. Humphreys, \emph{Linear Algebraic Groups}, Graduate Texts in Mathematics, Springer-Verlag, New York-Heidelberg (1975).
    
    \bibitem{AK} Anton A. Klyachko, \emph{Automorphisms and isomorphisms of {C}hevalley groups and algebras}, J. Algebra, Vol. 324 (2010), 2608--2619. 

    \bibitem{JL&TR} Joseph Landin and Irving Reiner, \emph{Automorphisms of the general linear group over a principal ideal domain}, Ann. of Math. (2), Vol. 65 (1957), 519--526.

    \bibitem{HM} Hideyuki Matsumura, \emph{Commutative ring theory}, translated from the Japanese by M. Reid, Cambridge Studies in Advanced Mathematics, 8, Cambridge Univ. Press, Cambridge, 1986.

    \bibitem{JP&BM} Bernard R. McDonald and J. Pomfret, \emph{Automorphisms of ${\rm GL}\sb{n}(R),\,R$ a local ring}, Trans. Amer. Math. Soc., Vol. 173 (1972), 379--388.

    \bibitem{LM&BM} Bernard R. McDonald and L. McQueen, \emph{Automorphisms of the symplectic group over a local ring}, J. Algebra {\bf 30} (1974), 485--495; MR0360854

    \bibitem{OO1} O. Timothy O'Meara, \emph{The automorphisms of the linear groups over any integral domain}, J. Reine Angew. Math., Vol. 223 (1966), 56--100.

    \bibitem{OO2} O. Timothy O'Meara, \emph{The automorphisms of the standard symplectic group over any integral domain}, J. Reine Angew. Math. {\bf 230} (1968), 104--138.

    \bibitem{VP1} Vasilij M. Petechuk, \emph{Automorphisms of the groups ${\rm SL}\sb{n}$, ${\rm GL}\sb{n}$\ over certain local rings}, Mat. Zametki, Vol. 28 (1980), No.~2, 187--204, 318.

    \bibitem{VP2} Vasilij M. Petechuk, \emph{Automorphisms of matrix groups over commutative rings}, Mat. Sb. (N.S.), Vol. 117(159) (1982), No.~4, 534--547, 560.

    \bibitem{VP3} Vasilij M. Petechuk, \emph{Isomorphisms of symplectic groups over commutative rings}, Algebra i Logika, Vol. 22 (1983), No.~5, 551--562.

    \bibitem{VP4} Vasilij M. Petechuk and J.~V. Petechuk, \emph{Isomorphisms of matrix groups over commutative rings}, Acta Sci. Math. (Szeged), Vol. 83 (2017), No.~1-2, 113--123.
    
    \bibitem{EP&NV} Eugene Plotkin and Nikolai A. Vavilov, \emph{Chevalley groups over commutative rings. \RNum{1}. {E}lementary calculations}, Acta Appl. Math., Vol. 45 (1996), 73--113. 

    \bibitem{CR1} C.~E. Rickart, Isomorphic groups of linear transformations, Amer. J. Math., Vol. 72 (1950), 451--464.

    \bibitem{CR2} C.~E. Rickart, Isomorphic groups of linear transformations. II, Amer. J. Math., Vol. 73 (1951), 697--716.
    
    \bibitem{TS} Tonny A. Springer, \emph{Linear algebraic groups}, 2nd edition, Birkh\"auser Boston, Inc., Boston, MA (1998).
    
    \bibitem{RS} Robert Steinberg, \emph{Lectures on Chevalley Groups}, Yale University Press (1968).

    \bibitem{RS1} Robert Steinberg, \emph{Automorphisms of finite linear groups}, Canadian J. Math., Vol. 12 (1960), 606--615.
    
    \bibitem{RSTCG} Robert Steinberg, \emph{Variations on a Theme of Chevalley}, Pacific J. Math., Vol. 9 (1959), 875-891.

    \bibitem{RS2} Robert Steinberg, \emph{The isomorphism and isogeny theorems for reductive algebraic groups}, J. Algebra, Vol. 216 (1999), No.~1, 366--383.
    
    \bibitem{KS1} Kazuo Suzuki, \emph{On Normal Subgroups of Twisted Chevalley Groups over Local Rings}, Sci. Rep. Tokyo Kyoiku Daigaku Sect. A, Vol. 13 (1977), 238-249.
    
    \bibitem{KS2} Kazuo Suzuki, \emph{Normality of the Elementary Subgroups of Twisted Chevalley Groups over Commutative Rings}, J. Algebra, Vol. 175 (1995), 526-536.
    
    \bibitem{KS3} Kazuo Suzuki, \emph{Centers of Twisted Chevalley Groups over Commutative Rings}, Kumamoto J. Math., Vol. 6 (1993), 1-9.
    
    \bibitem{GT} Giovanni Taddei, \emph{Normalit\'{e} des groupes \'{e}l\'{e}mentaires dans les groupes de {C}hevalley sur un anneau}, Contemp. Math. (2), Vol. 55 (1986), 693-710.
    
    \bibitem{LV} Leonid N. Vaserstein, \emph{On normal subgroups of Chevalley groups over commutative rings}, Tohoku Math. J. (2), Vol. 38 (1986), 219-230. 

    \bibitem{NV1} Nikolai A. Vavilov, \emph{Structure of Chevalley groups over commutative rings}, Nonassociative algebras and related topics (Hiroshima, 1990), World Sci. Publ., River Edge, NJ (1991), 219--335. 

    \bibitem{WW} W. C. Waterhouse, \emph{Automorphisms of ${\rm GL}\sb{n}(R)$}, Proc. Amer. Math. Soc., Vol. 79 (1980), No.~3, 347--351.

    \bibitem{EZ} Efim I. Zelmanov, \emph{Isomorphisms of linear groups over an associative ring}, Sibirsk. Mat. Zh., Vol. 26 (1985), No.~4, 49--67, 204. 
\end{thebibliography}
\end{document}